\numberwithin{equation}{section}
 \newtheorem{thm}{Theorem}[section]
 \newtheorem{lemma}[thm]{Lemma}
 \newtheorem{prop}[thm]{Proposition}
 \newtheorem{cor}[thm]{Corollary}
 \theoremstyle{definition}
\newtheorem{defn}[thm]{Definition}
\newtheorem*{example}{Example}
\newtheorem{exa}{Example}
\theoremstyle{definition}
\newtheorem{problem}{Open Problem}
\theoremstyle{remark}
\newtheorem{remark}[thm]{Remark}
\newcommand{ \Mf}{\mathfrak{M}}
\newcommand{\e}{\mathfrak{e}}
\newcommand{\Lc}{\mathcal{L}}
\newcommand{\Sc}{\mathcal{S}}
\newcommand{\Gb}{\mathbb G}
\newcommand{ \Rb}{\mathbb{R}}
 \newcommand{ \Nbb}{\mathbb{N}}
 \newcommand{\al}{\alpha}
 \newcommand{\la}{\langle}
 \newcommand{\ra}{\rangle}
\newcommand{\de}{\delta}
 \newcommand{\be}{\beta}
 \newcommand{\ep}{\varepsilon}
\begin{document}

 \title[Algebraic and analytic properties of semigroups]{Algebraic and analytic properties of semigroups related to fixed point properties of non-expansive mappings}

\author[A. T.-M. Lau]{Anthony To-Ming Lau \dag}
\address{\dag \; Department of Mathematical and Statistical sciences\\
           University of Alberta\\
           Edmonton, Alberta\\
           T6G 2G1 Canada}
\email{anthonyt@ualberta.ca}
\thanks{\dag  \; Supported by NSERC Grant ZC912}

\author[Y. Zhang]{ Yong Zhang \ddag}
\address{\ddag \; Department of Mathematics\\
           University of Manitoba\\
           Winnipeg, Manitoba\\
           R3T 2N2 Canada}
\email{yong.zhang@umanitoba.ca}
\thanks{\ddag \; Supported by NSERC Grant 1280813}


\subjclass{Primary 43A07 Secondary 43A60, 22D05 46B20}

\keywords{amenability properties, semigroups, nonexpansive mappings, weakly compact, weak* compact, convex sets,
                                    common fixed point, invariant mean, submean}

\begin{abstract}
The purpose of this paper is to give an updated survey on various algebraic and analytic properties of semigroups related to fixed point properties of semigroup actions on a non-empty closed convex subset of a Banach space or, more generally, a locally convex topological vector space.
\end{abstract}

\maketitle

Dedicated to Professor Karl H. Hofmann with admiration and \mbox{respect}

\section{Introduction}\label{intro}

In this paper we shall present an updated survey on fixed point properties of a semigroup of non-expansive mappings on a closed convex subset of a Banach space (or more generally, a locally convex topological space) related to certain algebraic and analytic properties of the semigroup. We begin with some historic developments of the investigation.

Let $E$ be a Banach space and let $K$ be a non-empty bounded closed convex subset of $E$. We say that $K$ has the \emph{fixed point property} if for every non-expansive mapping $T$: $K\to K$ (i.e. $\|Tx-Ty\| \leq \|x-y\|$, $x, y\in K$), $K$ contains a fixed point for $T$.

It follows from Bruck \cite{Bruck} that if $E$ is a Banach space with the \emph{weak fixed point property} (i.e. any weakly compact convex subset of $E$ has the fixed point property), then any weakly compact convex subset $K$ of $E$ has the (common) fixed point property for any commutative semigroup acting on $K$.

A well-known result of Browder \cite{Browder} asserts that if $E$ is uniformly convex, then $E$ has the weak fixed point property. Kirk \cite{Kirk} extended this result by showing that if $K$ is a weakly compact subset of $E$ with normal structure, then $K$ has the fixed point property. Other examples of Banach spaces with the weak fixed point property include $c_0$, $\ell^1$, trace class operators on a Hilbert space and the Fourier algebra of a compact group (see \cite{DLT, G-K2, L-L, LM, LMU, Lennard, Lim 80, Maurey, Brailey} and \cite{BP, BPP} for more details). However, as shown by Alspach \cite{A}, $L^1[0,1]$ does not have the weak fixed point property.

Given a semigroup $S$, we let $\ell^{\infty}(S)$ be the C*-algebra of bounded complex-valued functions on $S$ with the supremum norm and pointwise multiplication. For each $a\in S$ and $f\in \ell^{\infty}(S)$ let $\ell_a f$ and $r_a f$ be the left and right translates of $f$ by $a$, respectively; i.e. $\ell_af(s) = f(as)$ and $r_af(s) = f(sa)$ ($s\in S$). Let $X$ be a closed subspace of  $\ell^{\infty}(S)$ containing constants and invariant under translations. Then a linear functional $m\in X^*$ is called a \emph{mean} if $\|m\| = m(1) = 1$; $m$ is called a \emph{left (resp. right) invariant mean}, denoted by LIM (resp. RIM), if $m(\ell_af) = m(f)$ (resp. $m(r_af) = m(f)$) for all $a\in S$, $f\in X$. A discrete semigroup $S$ is \emph{left (resp. right) amenable} if $\ell^{\infty}(S)$ has a LIM (resp. RIM). Let X be a C*-subalgebra of $\ell^\infty(S)$. Then the \emph{spectrum} of X is the set of non-zero multiplicative linear functionals on X equipped with the relative weak* topology. Every $s\in S$ is a multiplicative linear functional on X if we regard it as the evaluation functional: $\la s, f\ra = f(s)$ for $f\in X$.

Let $S$ be a \emph{semitopological semigroup}, i.e.~$S$ is a semigroup with Hausdorff topology such that for each $a\in S$, the mappings $s \mapsto sa$ and $s \mapsto as$ from $S$ into $S$ are continuous. We let $C_b(S)$ be the space of all bounded continuous complex-valued functions on $S$. We denote by $LUC(S)$ the space of all left uniformly continuous functions on $S$, i.e. all $f\in C_b(S)$ such that the mapping $s\mapsto \ell_s f$: $S\to C_b(S)$ is continuous. The semitopological semigroup $S$ is called left amenable if $LUC(S)$ has a LIM. We note that if $S$ is discrete, then $LUC(S) = C_b(S) = \ell^\infty(S)$ and so left amenability of $S$ coincides with that previously defined. Now
 denote by $AP(S)$ the space of all $f\in C_b(S)$ such that $\mathcal{L}\mathcal{O}(f) = \{\ell_sf: \; s\in S\}$ is relatively compact in the norm topology of $C_b(S)$, and denote by $WAP(S)$ the space of all $f\in C_b(S)$ such that $\mathcal{L O}(f)$ is relatively compact in the weak topology of $C_b(S)$. Functions in $AP(S)$ (resp. $WAP(S)$) are called almost periodic (resp. weakly almost periodic) functions on $S$. Later in this paper we will also need to consider the set $\mathcal{R}\mathcal{O}(f) = \{r_sf: \; s\in S\}$. As well known, $f\in AP(S)$ (resp. $f\in WAP(S)$) if and only if $\mathcal{R}\mathcal{O}(f)$ is relatively compact in the norm (resp. weak) topology of  $C_b(S)$. Let $S^a$ (resp. $S^w$) be the almost periodic (resp. weakly almost periodic) compactification of $S$, i.e. $S^a$ (resp $S^w$) is the spectrum of the C*-algebra $AP(S)$ (resp. $WAP(S)$). Then $S^a$ and $S^w$ are semitopological semigroups with multiplications defined by: $\la m\cdot n, f\ra = \la m,n\cdot f\ra$, where $n\cdot f(s) = \la n, \ell_sf \ra$, $m, n \in S^a$ (resp. $S^w$), $f \in AP(G)$ (resp. $WAP(G)$).  In fact, the multiplication in $S^a$ is even jointly continuous. In other words, $S^a$ is a topological semigroup.

The semitopological semigroup $S$ is called \emph{left reversible} if any two closed right ideals of $S$ have non-void intersection, i.e. $\overline{aS}\cap\overline{bS} \neq \emptyset$ for any $a, b\in S$. 

If $A$ is a subset of a topological space $E$, then $\overline{A}$ will denote the closure of $A$ in $E$. If in addition, $E$ is a linear topological space, then [$\overline{co} A$] $co A$ will denote the [closed] convex hull of $A$ in $E$. 

An action of $S$ on a topological space $K$ is a mapping $\psi$ from $S\times K$ into $K$, denoted by $T_sx = \psi(s,x)$ ($s\in S$ and $x\in K$), such that $T_{s_1s_2}x = T_{s_1}(T_{s_2}x)$ ($s_1, s_2\in S$ and $x\in K$). The action is \emph{separately continuous} or \emph{jointly continuous} if the mapping $\psi$ is, respectively, separately or jointly continuous. For convenience, $T_sx$ is also denoted by $sx$, and we call $\Sc = \{T_s: s\in S\}$ a representation of $S$ on $K$. 

When $K$ is a convex subset of a linear topological space, we say that an action of $S$ on $K$ is affine if for each $s\in S$, the mapping  $x \mapsto sx$: $K\to K$ satisfies $s(\lambda x + (1- \lambda)y) = \lambda sx + (1-\lambda)sy$ for $s\in S$, $x, y\in K$ and $0\leq \lambda \leq 1$. A locally convex topological space $E$ with the topology generated by a family $Q$ of seminorms will be denoted by $(E,Q)$.
An action of $S$ on a subset $K\subseteq E$ is $Q$-non-expansive if $\rho (s\cdot x - s\cdot y) \leq \rho(x-y)$ for all $s\in S$, $x, y \in K$ and $\rho \in Q$.

The following fixed point property was proved by the first author \cite[Theorem~4.1]{Lau73} (see also \cite{M1, Tak}):
\begin{thm}
Let $S$ be a semitopological semigroup. Then $AP(S)$, the space of continuous almost periodic functions on $S$, has a LIM if and only if $S$ has the following fixed point property:
\begin{description}
\item[(D)] Whenever $S$ is a separately continuous and $Q$-non-expansive action on a compact convex subset $K$ of a separated locally convex space $(E, Q)$, $K$ has a common fixed point for $S$.
\end{description}
\end{thm}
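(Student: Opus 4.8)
The plan is to prove the two implications separately.

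\textbf{(D) implies that $AP(S)$ has a LIM.} Let $M=M(AP(S))$ be the set of means on $AP(S)$, a weak$^*$‑compact convex subset of $AP(S)^*$ containing every point evaluation $\delta_s$. Because $\ell_{s_1s_2}=\ell_{s_2}\circ\ell_{s_1}$ on $\ell^\infty(S)$, the assignment $s\mapsto\ell_s^*$ (which clearly carries means to means) is a genuine \emph{left} affine action of $S$ on $M$, and its fixed points are precisely the left invariant means on $AP(S)$. I would check that this action is separately continuous: for fixed $s$ the map $\ell_s^*$ is weak$^*$‑continuous, and for fixed $m$ the map $s\mapsto m(\ell_sf)$ is continuous for each $f\in AP(S)$ since $AP(S)\subseteq LUC(S)$ (if $s_\alpha\to s$ then $\ell_{s_\alpha}f$ stays in the norm‑compact set $\overline{\LO(f)}$ and converges pointwise, hence in norm, to $\ell_sf$). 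The one nonroutine ingredient is to choose the locally convex topology: for $f\in AP(S)$ put $C_f:=\overline{\{f\}\cup\LO(f)}^{\,\|\cdot\|}$, a bounded, norm‑compact, left‑translation‑invariant subset of $AP(S)$, and let $Q:=\{\rho_{C_f}:f\in AP(S)\}$ with $\rho_{C_f}(\mu)=\sup_{g\in C_f}|\mu(g)|$. These seminorms separate points of $AP(S)^*$; each is weak$^*$‑continuous on $M$ (the pairing $(\mu,g)\mapsto\mu(g)$ is jointly continuous on $M\times C_f$ for the weak$^*$ and norm topologies, and $C_f$ is compact), so the $Q$‑topology coincides with the weak$^*$ topology on the compact set $M$; and $\rho_{C_f}(\ell_s^*\mu-\ell_s^*\nu)=\rho_{\ell_sC_f}(\mu-\nu)\le\rho_{C_f}(\mu-\nu)$ since $\ell_sC_f\subseteq C_f$, so the action is $Q$‑non‑expansive. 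Property (D), applied to the action just described on the compact convex $K=M$ inside the separated locally convex space $(AP(S)^*,Q)$, then produces a common fixed point $m_0$, i.e.\ a LIM.

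\textbf{A LIM on $AP(S)$ implies (D).} Fix a LIM $m_0$, a separately continuous $Q$‑non‑expansive action $s\mapsto T_s$ of $S$ on a compact convex $K\subseteq(E,Q)$, and a point $x\in K$. Step 1: for every $\varphi\in E^*$ the function $f_{x,\varphi}(s):=\varphi(T_sx)$ lies in $AP(S)$. Indeed $\{\varphi\circ T_t:t\in S\}$ is uniformly bounded on the compact set $K$ and equicontinuous there — this is exactly where $Q$‑non‑expansiveness enters, via $\rho(T_ty-T_tz)\le\rho(y-z)$ for $\rho\in Q$ — hence relatively norm‑compact in $C(K)$ by Ascoli; and $\LO(f_{x,\varphi})$ is the image of this set under the contraction $g\mapsto g\circ(s\mapsto T_sx)$ from $C(K)$ into $C_b(S)$, so it is relatively norm‑compact. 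Step 2 (averaging): the spectrum of $AP(S)$ is the weak$^*$‑closure of $\{\delta_s:s\in S\}$, so $m_0$ is a weak$^*$‑limit of convex combinations $\sum_i\lambda_i^\alpha\delta_{s_i^\alpha}$; the points $\sum_i\lambda_i^\alpha T_{s_i^\alpha}x\in K$ then cluster, by compactness of $K$, to a point $p\in\overline{co}\{T_sx:s\in S\}\subseteq K$ with $\varphi(p)=m_0(f_{x,\varphi})$ for all $\varphi\in E^*$.

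Step 3 — showing $p$ (or a closely related point) is a common fixed point — is the main obstacle. If the action is affine it is immediate: $\varphi(T_tp)=m_0(\ell_tf_{x,\varphi})=m_0(f_{x,\varphi})=\varphi(p)$ by left invariance of $m_0$. In general $T_t$ does not commute with the convex averaging of Step 2, and I would circumvent this by passing to the compactification. Using Step 1, $\varphi(\widetilde T_nx):=\langle n,f_{x,\varphi}\rangle$ defines, for $n\in S^a$, a point $\widetilde T_nx\in K$, and $n\mapsto\widetilde T_n$ is a continuous homomorphism of the compact \emph{topological} semigroup $S^a$ into the equicontinuous — hence jointly continuous — semigroup of non‑expansive self‑maps of $K$, extending $T$; a common fixed point for $\widetilde T$ is the same as one for $T$. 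Since $AP(S)=C(S^a)$ carries a LIM, the minimal left ideals of $S^a$ are groups; fixing one, say $L$, with idempotent identity $e$, the map $\widetilde T_e$ is a non‑expansive retraction of $K$ onto $\widetilde T_e(K)$, the compact group $L$ acts on $\widetilde T_e(K)$, and a $\widetilde T$‑fixed point lying in $\widetilde T_e(K)$ is automatically fixed by all of $S^a$. One then finishes by averaging an $L$‑orbit against the Haar measure of $L$; the real work here is to arrange that the maps $\widetilde T_g$ ($g\in L$) act \emph{affinely} — which they do on a convex set, being $Q$‑isometries, by a Mazur--Ulam type argument — so that Haar averaging genuinely commutes with the action. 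Apart from this last step, everything (the almost periodicity in Step 1, the existence of $p$ in $K$, and the seminorm construction in the first implication) is routine once the right objects are identified.
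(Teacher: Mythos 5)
Your first implication ((D) $\Rightarrow$ existence of a LIM) is correct and is essentially the argument of \cite{Lau73}: let $S$ act on the weak$^*$ compact convex set of means by $\ell_s^*$, and replace the defining seminorms of the weak$^*$ topology by the sup--seminorms over the norm-compact, left-translation-invariant orbit closures $C_f$, so that the action becomes $Q$-non-expansive while the $Q$-topology still agrees with the weak$^*$ topology on the means. The converse is where the trouble lies. Your Steps 1 and 2, the extension of the action to $S^a$ (equivalently, the passage to the enveloping semigroup), and the observation that a LIM on $C(S^a)$ forces a unique minimal right ideal so that each minimal left ideal $L=S^a e$ is a compact topological group whose fixed points are automatically fixed by all of $S^a$, are all sound. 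But the proposed finish by Haar averaging has a genuine gap. First, the set $\widetilde T_e(K)$ on which the maps $\widetilde T_g$ ($g\in L$) are surjective isometries is merely the image of $K$ under a non-expansive retraction and need not be convex; on the convex set $K$ itself the $\widetilde T_g$ are only non-expansive (one gets $\rho(\widetilde T_e x-\widetilde T_e y)\le\rho(\widetilde T_g x-\widetilde T_g y)\le\rho(x-y)$, and $\widetilde T_e$ is not an isometry), so there is no convex set on which $L$ acts by bijective isometries. Second, even granted such a set, ``surjective isometries of a convex set are affine'' is not a Mazur--Ulam statement: Mazur--Ulam concerns isometries of entire normed spaces, and its extension to convex subsets (Mankiewicz) requires nonempty interior, which a compact convex subset of an infinite-dimensional space never has; nothing here (such as strict convexity of the seminorms) makes metric midpoints unique. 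So the affinity needed to commute $\widetilde T_h$ past the Haar integral is unjustified.

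The known proof closes the argument without any affinity. Since $C(S^a)$ has a LIM, any two closed right ideals of $S^a$ meet (each contains the unique minimal right ideal), i.e.\ $S^a$ is left reversible, and one then applies Mitchell's fixed point theorem \cite{M1} for left reversible semitopological semigroups acting separately continuously and non-expansively on a compact convex set. That theorem is proved by the DeMarr/Brodskii--Milman device: take a nonempty closed subset $Y$ of $K$ minimal with respect to $\overline{\widetilde T_n(Y)}=Y$ for all $n$; if $Y$ has positive diameter, the normal structure of the compact convex set $\overline{co}\,Y$ yields a nondiametral center, and the corresponding ``center set'' is a strictly smaller invariant set, a contradiction. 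Replacing your Step 3 by this argument (or by a citation of Mitchell's theorem after verifying left reversibility of $S^a$) repairs the proof; the rest of your write-up stands.
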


It has been an open question for quite a long time (see \cite{Lau76, Lau.survey}) as to whether the existence of LIM on $WAP(S)$, the space of continuous weakly almost periodic functions on $S$, can be characterized by a fixed point property for non-expansive actions of $S$ on a weakly compact convex set.

It was proved by Hsu \cite{Hsu} (also see \cite[Corollary 5.5]{L-T1}) that if $S$ is discrete and left reversible, then $S$ has the following fixed point property:
\begin{description}
\item[(G)] Whenever $S$ acts on a weakly compact convex subset $K$ of a separated locally compact convex space $(E, Q)$ and the action is weakly separately continuous and $Q$-non-expansive, then $K$ contains a common fixed point for $S$.
\end{description}
Since the fixed point property (G) implies that $WAP(S)$ has LIM, it follows that if $S$ is discrete and left reversible, then $WAP(S)$ has a LIM. This improved an earlier result of Ryll-Nardzewski who proved, using his fixed point theorem for affine maps on weakly compact convex subsets of a Banach space, the existence of LIM on $WAP(S)$ when S is a group (see \cite{greenleaf}).

It is known that if $S$ is discrete and left amenable, then $S$ is left reversible. However a general semitopological semigroup $S$ needs not be left reversible even when $C_b(S)$ has a LIM unless $S$ is normal (see \cite{HL}).

The following implication diagram summarizes the known relations mentioned above for a discrete semigroup $S$.
\begin{equation}\label{E discrete}
\begin{matrix}
S \text{ left amenable} & & \\
     \Downarrow \quad \mathrel{\not\Uparrow} & & \\
S \text{ left reversible} & & \\
                   \negthickspace                                 &\searrow & \\
     \Downarrow \quad \mathrel{\not\Uparrow}  & & WAP(S) \text{ has LIM}\\
                    \negthickspace                                &\swarrow & \\
AP(S)\text{ has LIM}         &                  &
\end{matrix}
\end{equation}

The implication``$S$ is left reversible $\Rightarrow$ $AP(S)$ has a LIM'' for any semitopological semigroup was established in \cite{Lau73}. During the 1984 Richmond, Virginia conference on analysis on semigroups, T. Mitchell 
gave two examples to show that, for a discrete semigroup $S$, ``$AP(S)$ has LIM'' $\nRightarrow$ ``$S$ is left reversible'' (see \cite{Lau90}). 

\section{Semigroups of non-expansive mappings}\label{nonexpansive}

An action of a semitopological semigroup $S$ on a Hausdorff space $X$ is called \emph{quasi-equicontinuous} if $\overline{S}^{\,p}$, the closure of $S$ in the product space $X^X$, consists of only continuous mappings. Obviously, an equicontinuous action on a closed subset of a topological vector space is always quasi-equicontinuous (simply because if a net of equicontinuous functions converges pointwise to a function, then the limit function is also continuous). But a quasi-equicontinuous action on a convex compact subset of a topological vector space may not be equicontinuous (\cite[Example 4.14]{L-Z1}). The following was proved in \cite{L-Z1}.

\begin{thm}\label{F}
Let $S$ be a separable semitopological semigroup. Then $WAP(S)$ has a LIM if and only if $S$ has the fixed point property (F) stated as follows.
\begin{description}
\item[(F)] Whenever $S$ acts on a weakly compact convex subset $K$ of a separated locally convex space $(E, Q)$  and the action is weakly separately continuous, weakly quasi-equicontinuous and $Q$-nonexpansive, then $K$ contains a common fixed point for $S$.
\end{description}
\end{thm}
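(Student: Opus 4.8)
The statement is a biconditional and the two implications are handled separately, both resting on the standard identification of left invariant means on a translation-invariant function space with common fixed points of the translation action on the (weak* compact convex) set of means. For the implication (F) $\Rightarrow$ ``$WAP(S)$ has a LIM'', the plan is to model the fixed point problem inside $WAP(S)^*$. Put $E = WAP(S)^*$, and for $f\in WAP(S)$ let $W_f = \overline{co}^{\,w}(\LO(f)\cup\{f\})$, the weakly closed convex hull in $WAP(S)$; since $\LO(f)$ is relatively weakly compact (this is the meaning of $f\in WAP(S)$), the Krein--\v{S}mulian theorem gives that $W_f$ is weakly compact, so $\rho_f(m) := \sup_{g\in W_f}|\la m,g\ra|$ is a finite seminorm on $E$, and we set $Q = \{\rho_f : f\in WAP(S)\}$. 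A Mackey--Arens argument identifies the dual of $(E,Q)$ with $WAP(S)$, so that the weak topology of $(E,Q)$ is the weak* topology of $WAP(S)^*$; in particular the mean set $K = \{m\in E : \|m\| = \la m,1\ra = 1\}$ is a weakly compact convex subset of $(E,Q)$. Let $S$ act on $K$ by $\la T_sm, f\ra = \la m, \ell_sf\ra$, so that a common fixed point of the action is exactly a LIM on $WAP(S)$. Because $\ell_s$ is a norm-bounded (hence weakly continuous) linear endomorphism of $WAP(S)$ with $\ell_s\LO(f)\subseteq\LO(f)$ and $\ell_sf\in\LO(f)$, one has $\ell_sW_f\subseteq W_f$ and therefore $\rho_f(T_sm - T_sn) = \sup_{g\in W_f}|\la m-n,\ell_sg\ra|\le \rho_f(m-n)$; the action is $Q$-nonexpansive. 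It is weakly separately continuous, since each $T_s = (\ell_s)^*$ is weak* continuous and $s\mapsto \la T_sm,f\ra$ is the continuous function $m\cdot f$ ($WAP(S)$ being left introverted). And it is weakly quasi-equicontinuous: the pointwise-weak closure of $\{T_s\}$ in $K^K$ is the image of the weakly almost periodic compactification $S^w$ acting on $K$, and since that action is separately continuous (the double-limit property of weakly almost periodic functions), the members of that closure are all weak* continuous. Property (F) now delivers a common fixed point of $\{T_s\}$, i.e.\ a LIM on $WAP(S)$.

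For the converse, assume $m$ is a LIM on $WAP(S)$ and let $\Sc = \{T_s\}$ act on a weakly compact convex $K\subseteq (E,Q)$ as in (F). First, for $\ph\in E^*$ and $x\in K$, the function $f_{\ph,x}(s) := \la sx,\ph\ra$ lies in $WAP(S)$: continuity is weak separate continuity of the action, and weak almost periodicity follows from Grothendieck's iterated-limit criterion, using that $K$ is weakly compact and that the pointwise-weak closure of $\Sc$ in $K^K$ consists of weakly continuous maps (weak quasi-equicontinuity). Next, by Zorn's lemma --- intersections along chains are nonempty by weak compactness, and convexity and $\Sc$-invariance pass to the intersection --- choose a minimal nonempty weakly compact convex $\Sc$-invariant subset $K_0\subseteq K$. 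Fix $x_0\in K_0$ and define $\bar x\in E$ by $\la \bar x,\ph\ra = m_s\la sx_0,\ph\ra = m(f_{\ph,x_0})$; since $m$ lies in the weak* closed convex hull of the point evaluations and $\Sc x_0\subseteq K_0$, we get $\bar x\in\overline{co}^{\,w}(\Sc x_0)\subseteq K_0$.

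It remains to show that $K_0$ is a singleton, which is then the desired common fixed point for $\Sc$; this is the main obstacle. The difficulty is that the maps $T_s$ are not assumed affine, so the mean $m$ cannot be pushed through $T_{s_0}$ and $\bar x$ need not itself be a fixed point. The remedy is a Chebyshev / asymptotic-center argument inside $K_0$: if $K_0$ were not a singleton, then, using the $Q$-nonexpansiveness together with the $m$-averaging above, one attaches to $K_0$ a canonical ``center'' set --- nonempty and weakly compact convex by weak compactness and the weak lower semicontinuity of the seminorms in $Q$, and $\Sc$-invariant because nonexpansiveness carries the defining inequalities along the action while the invariant mean makes the construction insensitive to pre-composition by elements of $S$ --- which is a \emph{proper} subset of $K_0$, contradicting minimality. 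Hence $K_0 = \{x^*\}$ and $x^*$ is a common fixed point for $\Sc$. Making this center construction genuinely $\Sc$-invariant in the present non-affine, non-reversible, locally convex setting is where the real work lies, and it is also where separability of $S$ is used, to provide the metrizability of the relevant weakly compact sets needed to run the argument along sequences.
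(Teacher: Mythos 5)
First, a remark on the comparison itself: the survey does not reproduce a proof of Theorem~\ref{F} --- it only records the statement and defers to \cite{L-Z1} --- so what follows measures your proposal against the argument given there. Your first direction, (F) $\Rightarrow$ ``$WAP(S)$ has a LIM'', is correct and is essentially the standard construction (Lau's 1973 argument for $AP(S)$, adapted to $WAP(S)$ in \cite{L-Z1}): seminorms of uniform convergence on the weakly compact convex hulls of left orbits, Mackey--Arens to identify the weak topology of $(E,Q)$ with the weak* topology of $WAP(S)^*$, and the checks that the dual translation action on the set of means is $Q$-nonexpansive, separately continuous (left introversion) and quasi-equicontinuous (its pointwise closure is the affine weak*-continuous action of $S^w$). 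Separability plays no role in this half, as you implicitly notice.

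The converse is where the content of the theorem lies, and there your proposal has a genuine gap which you yourself flag: the step showing that the minimal weakly compact convex invariant set $K_0$ is a singleton is never carried out. The sketch you offer --- attach to $K_0$ a ``canonical center'' via nonexpansiveness and the invariant mean, and argue it is a proper invariant subset --- fails as stated for a concrete reason: the LIM is only defined on $WAP(S)$, and the only functions you have placed in $WAP(S)$ are the scalar orbit functions $s\mapsto\langle \varphi, T_sx\rangle$. The functions one must average to define an asymptotic radius, namely $s\mapsto \rho(T_sx-y)$ for $\rho\in Q$, are suprema of moduli of such functions and are not known to be weakly almost periodic, so the mean cannot be applied to them (this obstruction is exactly what motivates the submean machinery on all of $\ell^\infty(S)$ in Section 6 of the survey). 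Moreover, properness of a Chebyshev-type center set is precisely what normal structure buys in Kirk-style arguments, and (F) assumes no normal structure; and separability is invoked rhetorically (``to provide metrizability'') but never actually used. The proof in \cite{L-Z1} proceeds differently at this juncture: it exploits the enveloping semigroup $\overline{\mathcal{S}}^{\,p}\subseteq K^K$, which by weak quasi-equicontinuity (and its hereditary version, \cite[Lemma~3.1]{L-Z1}) consists of weakly continuous maps that remain $Q$-nonexpansive by weak lower semicontinuity of the seminorms, and uses the LIM on $WAP(S)\cong C(S^w)$ to control the ideal structure of this compact right topological semigroup, reducing matters to the left-reversible setting of property (G); the separability of $S$ is what makes that reduction run. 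None of this is present in your outline, so the hard half of the theorem remains unproved.
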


Consider the following fixed point property for a semitopological semigroup $S$:
\begin{description}
\item[(E)]\label{E} Whenever $S$ acts on a weakly compact convex subset $K$ of a separated locally convex space $(E, Q)$ as $Q$-nonexpansive self-mappings and, if in addition, the action is separately continuous and equicontinuous when $K$ is equipped with the weak topology of  $(E. Q)$, then $K$ contains a common fixed point for $S$.
\end{description}
Clearly we have
\[  (\text{G}) \Rightarrow  (\text{F}) \Rightarrow  (\text{E}) \Rightarrow  (\text{D}). \]

\begin{problem}
 Can any of the above implications be reversed?
\end{problem}

Related to multiplicative means, the following result was obtained in \cite{L-Z1}.

\begin{thm}\label{Multiplicative}
Let $S$ be separable and $n$ be a positive integer. Then $WAP(S)$ has a LIM of the form $\frac{1}{n}\sum_{i=1}^n \phi_i$, where each $\phi_i$ is a multiplicative mean on $WAP(S)$, if and only if
\begin{description}
\item[($P_n$)] Whenever $S$ is a separately continuous and quasi-equicontinuous action on a compact Hausdorff space $X$, then there exists a nonempty finite subset $F\subseteq X$, $|F| \leq n$, $|F|$ divides $n$ such that $sF = F$ for all $s \in S$.
\end{description}
\end{thm}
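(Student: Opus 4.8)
The plan is to argue both implications inside the weakly almost periodic compactification $S^w$, using the identification of multiplicative means on $WAP(S)$ with the points of $S^w$. The basic dictionary is this: for $\phi_1,\dots,\phi_n\in S^w$, viewed as evaluation functionals, one has $\langle\phi_i,\ell_af\rangle=\langle a\phi_i,f\rangle$ for $a\in S$ and $f\in WAP(S)$, where $a\phi_i$ is the product in $S^w$; hence $m=\frac1n\sum_{i=1}^n\phi_i$ is a LIM precisely when $\frac1n\sum_i\delta_{a\phi_i}=\frac1n\sum_i\delta_{\phi_i}$ for every $a\in S$, and since distinct points of $S^w$ give linearly independent functionals, this says exactly that the multiset $\Phi=\{\phi_1,\dots,\phi_n\}$ is carried onto itself (multiplicities included) by left translation by every $a\in S$; by density of $S$ and separate continuity of the multiplication on $S^w$, the same then holds for left translation by every $p\in S^w$.

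For the implication ``$(P_n)\Rightarrow$ such a mean exists'' I would apply $(P_n)$ to the action of $S$ on $X=S^w$ by left translations, $T_sp=sp$. This action is separately continuous, and it is quasi-equicontinuous because any pointwise limit in $(S^w)^{S^w}$ of a net $(L_{s_\alpha})$ with $s_\alpha\in S$ is $L_q$ for some $q\in S^w$ (pass to a subnet with $s_\alpha\to q$ in the compact set $S^w$), and every such $L_q$ is continuous. So $(P_n)$ yields a non-empty finite $F\subseteq S^w$ with $|F|$ dividing $n$ and $sF=F$ for all $s\in S$; by density $pF=F$ for all $p\in S^w$, so each $L_p$ restricts to a bijection of $F$. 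Then $m:=\frac1{|F|}\sum_{p\in F}\delta_p$ is a LIM, each $\delta_p$ is multiplicative, and writing $n=|F|\,\ell$ we get $m=\frac1n\sum_{i=1}^n\phi_i$ with each point of $F$ repeated $\ell$ times.

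For the converse, let $m=\frac1n\sum_{i=1}^n\phi_i$ be a LIM with $\phi_i\in S^w$, and let $S$ act separately continuously and quasi-equicontinuously on a compact Hausdorff space $X$. As in \cite{L-Z1}, the functions $s\mapsto h(sx)$ ($h\in C(X)$, $x\in X$) lie in $WAP(S)$, so the action extends to a separately continuous action of $S^w$ on $X$ with $T_{pq}=T_pT_q$. Let $P=\{\psi_1,\dots,\psi_k\}$ be the support of $\Phi$, with multiplicities $\nu_j$, so $\sum_j\nu_j=n$. By the dictionary, left translation by each $p\in S^w$ restricts to a $\nu$-preserving bijection of $P$; in particular $P$ is a finite subsemigroup of $S^w$ on which every left translation is bijective, so by the elementary structure theory of such semigroups $P\cong G\times R$, with $G$ a finite group and $R$ a right-zero semigroup (i.e.\ $rr'=r'$ for all $r,r'\in R$). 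The principal left ideals $P\psi$ of $P$ are precisely the fibres $G\times\{r\}$; each has $|G|$ elements, each is fixed setwise by every $L_p$ (since $pP=P$), and $\nu$ is constant on each of them, because $\{L_\chi|_{P\psi}:\chi\in P\}$ is transitive on $P\psi$ and $\nu$-preserving. Hence $n=|G|\sum_{r\in R}\nu_r$, so $|G|$ divides $n$. Now fix $x_0\in X$ and one fibre $P\psi_0$, and put $F:=\{T_px_0:p\in P\psi_0\}\subseteq X$. Since $sP\psi_0=P\psi_0$ and $L_s$ is a bijection of $P\psi_0$, we get $T_sF=F$ for all $s\in S$; and the equivalence relation ``$T_px_0=T_{p'}x_0$'' on $P\psi_0$ is invariant under the transitive family $\{L_\chi|_{P\psi_0}:\chi\in P\}$, so all its classes have a common size, whence $|F|$ divides $|P\psi_0|=|G|$, and therefore divides $n$. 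Thus $(P_n)$ holds.

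The compactification bookkeeping (extending the action to $S^w$, the continuity claims) and the easy direction are routine. The heart of the matter is the structural step in the converse: recognising the support $P$ of the invariant mean as a finite semigroup $G\times R$ with $R$ right-zero, and verifying that left translation on $X$ by elements of $S^w$ respects the partition of $P$ into principal left ideals with $\nu$ constant on each. It is exactly this ``the fibration is preserved'' phenomenon that forces $|G|$ --- and hence the cardinality of the finite invariant set extracted in $X$ --- to divide $n$, rather than merely to be $\le n$.
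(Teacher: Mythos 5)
The survey itself contains no proof of Theorem~\ref{Multiplicative} (it defers to \cite{L-Z1}), so I can only compare your argument with the route that reference must take; you work inside the weakly almost periodic compactification $S^w$, which is surely the intended mechanism, and every step checks out. The dictionary between LIMs of the form $\frac{1}{n}\sum_{i=1}^n\phi_i$ and multisets in $S^w$ invariant under left translation is correct; the extension of a separately continuous, quasi-equicontinuous action to $S^w$ via $h(T_px)=\langle p,f_{h,x}\rangle$, resting on $f_{h,x}(s)=h(sx)$ lying in $WAP(S)$ (Grothendieck's double-limit criterion plus continuity of the limit maps supplied by quasi-equicontinuity), is the standard lemma; and the structural core --- that the support $P$ of the invariant average is a finite left-cancellative subsemigroup of $S^w$, hence a left group $G\times R$ with $R$ right-zero, that $\nu$ is constant on each fibre $G\times\{r\}$ so $|G|$ divides $n$, and that $|F|$ divides $|G|$ because left translations by elements of $P$ act transitively on each fibre and respect the relation $T_px_0=T_{p'}x_0$ --- is exactly what yields the divisibility conclusion rather than the weaker bound $|F|\le n$. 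The one point you must examine hard: nowhere do you use the separability of $S$, whereas the paper explicitly raises the removal of that hypothesis from Theorem~\ref{Multiplicative} as an open question, answered only for $n=1$ (Theorem~\ref{WAP MLIM}). I have checked the two places where countability could plausibly hide --- the verification that $f_{h,x}\in WAP(S)$ and the quasi-equicontinuity of the left-translation action on $S^w$ --- and both go through with nets and compactness of $X^X$ alone, with no sequential extraction needed. So either you have quietly strengthened the theorem, or there is a hidden appeal to separability you should locate; in either case the argument as written is sound, and you should state explicitly where (if anywhere) separability enters.
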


Let ($E,Q$) be a separated locally convex space. A subset $K$ of $E$ is said to have $Q$-normal structure 
if, for each $Q$-bounded  subset $H$ of $K$ that contains more than one point, there is $x_0\in co H$ and $p\in Q$ such that $\sup\{p(x-x_0) :\; x\in H\} < \sup\{p(x-y) :\; x, y\in H\}$. Here by $Q$-boundedness of $H$ we mean for each $p \in Q$ there is $d>0$ such that $p(x) \leq d$ for all $x\in H$. Any $Q$-compact subset has $Q$-normal structure. In a uniformly convex space (eg. any $L^p$, $p>1$, space) a bounded convex set always has normal structure.

For $AP(S)$ to have a LIM the following two results were proved in \cite{L-Z1}.

\begin{thm}\label{AP E'}
Let $S$ be a semitopological semigroup. Then $AP(S)$ has LIM if and only if $S$ has the following fixed point property.
\begin{description}
\item[(E$'$)]\label{E'} Whenever $S$ acts on a weakly compact convex subset $K$ of a separated locally convex space ($E, Q$) as $Q$-nonexpansive mappings, if $K$ has $Q$-normal structure and the $S$-action is separately continuous and equicontinuous when $K$ is equipped with the weak topology of $(E, Q)$, then $K$ contains a common fixed point for $S$.
\end{description}
In particular, fixed point properties (D) and (E\,$'$) are equivalent.
\end{thm}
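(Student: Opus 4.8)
The plan is to prove the equivalence ``$AP(S)$ has a LIM $\iff S$ has (E$'$)'' and then read off ``(D) $\iff$ (E$'$)'' by combining with the characterization quoted at the beginning, ``$AP(S)$ has a LIM $\iff S$ has (D)''. One implication is easy: I would first check that (E$'$) $\Rightarrow$ (D). Given an action of $S$ as in the hypothesis of (D), separately continuous and $Q$-nonexpansive on a $Q$-compact convex set $K\subseteq(E,Q)$, note that $K$ is weakly compact, that it has $Q$-normal structure (every $Q$-compact set does), and that on $K$ the weak topology of $(E,Q)$ coincides with the $Q$-topology, because $\sigma(E,E^*)|_K$ is a Hausdorff topology coarser than the compact topology $Q|_K$ and a continuous bijection of a compact space onto a Hausdorff space is a homeomorphism. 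A $Q$-nonexpansive family is automatically (uniformly) $Q$-equicontinuous since $\rho(sx-sy)\le\rho(x-y)$ for all $\rho\in Q$; hence on $K$ the action is weakly separately continuous and weakly equicontinuous, so (E$'$) produces a common fixed point and $S$ has (D). By the quoted theorem, $AP(S)$ then has a LIM.

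The substantial direction is ``$AP(S)$ has a LIM $\Rightarrow$ (E$'$)''. Let $S$ act on a weakly compact convex $K\subseteq(E,Q)$ with $Q$-normal structure, the action being weakly separately continuous, weakly equicontinuous and $Q$-nonexpansive. I would pass to the enveloping semigroup $\mathcal S=\overline{\{T_s:s\in S\}}$, closure taken in $K^K$ with the topology of pointwise convergence for the weak topology of $(E,Q)$. Weak equicontinuity together with weak compactness of $K$ makes $\mathcal S$ a \emph{compact topological semigroup} (an Ascoli/Ellis argument: on $\mathcal S$ the pointwise-weak topology is the topology of weak-uniform convergence on $K$, composition is jointly continuous, and each element of $\mathcal S$ is weakly continuous), acting jointly continuously on $(K,\text{weak})$; since each $\rho\in Q$ is weakly lower semicontinuous, pointwise-weak limits of $Q$-nonexpansive maps stay $Q$-nonexpansive, so the $\mathcal S$-action is $Q$-nonexpansive, and obviously a common fixed point of $\mathcal S$ is one of $S$. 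Next, the coordinate functions $s\mapsto\langle T_sx,f\rangle$ ($x\in K$, $f\in E^*$) lie in $AP(S)$ because the weakly equicontinuous orbit has relatively norm-compact left translates; more precisely the continuous homomorphism $\varepsilon\colon S\to\mathcal S$, $s\mapsto T_s$, has dense range and $\varepsilon^*$ carries $C(\mathcal S)$ into $AP(S)$, exhibiting $\mathcal S$ as a quotient of $S^a$. Restricting a LIM on $AP(S)$ along $\varepsilon^*$ and using density of $\varepsilon(S)$ together with norm-continuity of $T\mapsto\ell_T\Phi$, I obtain a LIM on $C(\mathcal S)$, i.e.\ a left-invariant regular Borel probability measure $\mu$ on $\mathcal S$.

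To finish, let $F=\text{supp}(\mu)$, which is nonempty; left-invariance of $\mu$ forces $aF=\overline{aF}=\text{supp}\bigl((L_a)_*\mu\bigr)=\text{supp}(\mu)=F$ for every $a\in\mathcal S$, where $L_a$ is left translation. Hence $F\subseteq a\mathcal S$ for all $a$, so $\emptyset\ne F\subseteq\overline{a\mathcal S}\cap\overline{b\mathcal S}$ for all $a,b\in\mathcal S$, that is, $\mathcal S$ is left reversible. Since $\mathcal S$ is left reversible and acts by weakly continuous $Q$-nonexpansive maps on the weakly compact convex set $K$ with $Q$-normal structure, the common fixed point theorem for left reversible semigroups of nonexpansive mappings (of Lim--Kirk type; compare property (G)) yields a common fixed point of $\mathcal S$ in $K$, which is in particular fixed by $S$. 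Together with the first direction and the quoted characterization, this gives ``$AP(S)$ has a LIM $\iff$ (E$'$)'' and hence ``(D) $\iff$ (E$'$)''.

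The delicate part is the passage to $\mathcal S$: one must verify that weak equicontinuity genuinely upgrades the pointwise-weak closure of $\{T_s\}$ to a \emph{compact topological} semigroup (joint continuity of composition, weak continuity of the limit maps, and persistence of $Q$-nonexpansiveness under pointwise-weak limits), and that $C(\mathcal S)$ embeds into $AP(S)$ so that the LIM descends. This also clarifies why the hypotheses of (E$'$) are not superfluous: weak equicontinuity is what makes $\mathcal S$ a compact topological semigroup and the orbit functions almost periodic, while $Q$-normal structure is precisely what converts left reversibility of $\mathcal S$ into an honest common fixed point on the merely weakly compact set $K$.
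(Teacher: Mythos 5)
The survey states this theorem without proof (it is quoted from [L-Z1]), and your argument reconstructs essentially the proof given there: (E$'$) $\Rightarrow$ (D) because on a $Q$-compact convex set the weak and $Q$-topologies, hence their uniformities, coincide, while for the converse the LIM on $AP(S)$ is transported to $C(\mathcal{S})$ for the compact \emph{topological} enveloping semigroup $\mathcal{S}$ (your Ascoli/Ellis upgrade and the inclusion $C(\mathcal{S})\circ\varepsilon\subseteq AP(S)$ are both correct), whose left reversibility --- your support-of-measure argument is in effect a direct proof of the Holmes--Lau result for compact, hence normal, semigroups --- then feeds into Lim's fixed point theorem for left reversible semigroups acting nonexpansively on a weakly compact convex set with normal structure. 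The one point to make explicit is that this last ingredient must be taken in its locally convex, $Q$-normal-structure form rather than the original Banach-space version of Lim, but that extension is exactly what [L-Z1] establishes (and what Theorem 3.3 of this survey also invokes), so I see no genuine gap.
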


\begin{thm}\label{AP for separable S} Let $S$ be a separable semitopological semigroup. Then $AP(S)$ has a LIM if and only if the fixed point property (E) holds.
\end{thm}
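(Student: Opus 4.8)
The plan is to treat the two implications separately, the content lying entirely in the converse. For the implication ``property~(E) $\Rightarrow$ $AP(S)$ has a LIM'' one needs nothing new: property~(E) formally implies property~(E$'$) (the latter carries the additional hypothesis of $Q$-normal structure, hence is a weaker demand), so Theorem~\ref{AP E'} gives a LIM on $AP(S)$; equivalently, one may use (E)$\Rightarrow$(D) from the displayed chain together with the characterization $AP(S)$ has a LIM $\iff$ (D). This direction uses neither separability nor anything about the structure of $S^a$. So the whole problem is: assuming $AP(S)$ has a LIM and $S$ is separable, deduce property~(E) -- that is, upgrade property~(E$'$) by deleting the $Q$-normal structure hypothesis.

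\textbf{The averaging step.} Suppose $\Sc=\{T_s\}$ acts on a weakly compact convex $K\subseteq(E,Q)$, separately continuously, equicontinuously for the weak topology, and $Q$-nonexpansively. The decisive first remark is that \emph{equicontinuity} -- as opposed to mere quasi-equicontinuity -- forces the relevant orbit functions into $AP(S)$ rather than only $WAP(S)$: for each $x\in K$ and each $\ph$ in the dual of $(E,Q)$, the function $s\mapsto\ph(T_sx)$ lies in $AP(S)$, because weak equicontinuity makes $y\mapsto\bigl(s\mapsto\ph(T_sy)\bigr)$ norm-continuous from the weakly compact set $\overline{\Sc x}^{\,w}$ into $C_b(S)$, so that $\mathcal{LO}\bigl(s\mapsto\ph(T_sx)\bigr)$ has norm-compact closure. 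Thus a LIM on $AP(S)$ is exactly the tool available; and since, as is well known, for $AP(S)$ the existence of a LIM is equivalent to the existence of a two-sided invariant mean, fix such a mean $m$. Averaging against $m$ defines $\Psi\colon K\to K$ by $\ph(\Psi x)=m\bigl(s\mapsto\ph(T_sx)\bigr)$; one checks routinely that $\Psi x\in\overline{co(\Sc x)}^{\,w}\subseteq K$, that $\Psi$ is weakly continuous (again by equicontinuity) and $Q$-nonexpansive, and -- using right invariance of $m$ -- that $\Psi\circ T_s=\Psi$ for every $s$. Note, however, that the $T_s$ are $Q$-nonexpansive but \emph{not} affine, so $\Psi$ does not by itself fix the $\Sc$-action.

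\textbf{Reduction via the enveloping semigroup.} Next pass to $\Sigma=\overline{\Sc}^{\,p}$, the closure of $\Sc$ in $K^K$ with the product of weak topologies: by equicontinuity $\Sigma$ is a compact \emph{topological} semigroup of weakly continuous $Q$-nonexpansive self-maps of $K$, acting jointly weakly continuously on $K$, and it is a continuous homomorphic image of $S^a$. Since $AP(S)$ has a LIM (equivalently a two-sided invariant mean) the kernel $\Kc(S^a)$ is a compact group, hence so is its image $\Kc(\Sigma)$. Writing $e$ for the identity of $\Kc(\Sigma)$, $e$ is an idempotent -- a $Q$-nonexpansive weakly continuous retraction of $K$ onto $K_e:=e(K)$ -- and $e$ is central in $\Sigma$; consequently $\Sigma$ acts on the weakly compact set $K_e$ through the compact group $\Kc(\Sigma)$, each element of which restricts to a $\rho$-isometry of $K_e$ for every $\rho\in Q$. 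Since $\operatorname{Fix}(\Sc)=\operatorname{Fix}(\Sigma)\subseteq K_e$ and on $K_e$ the $\Sigma$-action coincides with the $\Kc(\Sigma)$-action, the problem is reduced to producing a common fixed point for a compact group of $Q$-isometries of $K_e$.

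\textbf{Where separability enters, and the obstacle.} Because $S$ is separable, every orbit $\Sc x$, hence $\overline{\Sc x}^{\,w}$ and $\overline{\Kc(\Sigma)x}^{\,w}$, is separable, hence a separable Eberlein compact and so weakly metrizable. Restricting the $\Kc(\Sigma)$-action to the weak closure of one such orbit, one arrives at a weakly metrizable weakly compact set on which a metrizable compact group acts by isometries, and there a common fixed point is extracted by a minimal-invariant-set / Chebyshev-center argument which, in the metrizable compact-group setting and with the invariant mean available, no longer requires $Q$-normal structure; this is precisely the step that separates property~(E) from property~(E$'$), and it parallels the corresponding part of the proof of Theorem~\ref{F} (with $AP(S)$ and equicontinuity replacing $WAP(S)$ and quasi-equicontinuity). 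I expect this last step to be the main obstacle. The difficulty is structural: the $T_s$ are nonexpansive but not affine, so the averaging map $\Psi$ only yields $\Psi\circ T_s=\Psi$ and never pins down an $\Sc$-fixed point directly, while the ambient weakly compact set must be allowed to lack normal structure (Alspach's example shows this cannot be avoided once equicontinuity is dropped). Converting the two-sided invariant mean, together with the separability-induced metrizability and the compact-group structure of $\Kc(S^a)$, into an honest common fixed point is the delicate heart of the argument.
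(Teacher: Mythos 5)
The paper itself contains no proof of this theorem: it is a survey item attributed to \cite{L-Z1}, so your proposal has to stand on its own. The easy direction is fine ((E) trivially implies (E$'$), and Theorem~\ref{AP E'} does the rest), and your opening observation for the converse is the right one: weak equicontinuity plus $Q$-nonexpansiveness does place each orbit function $s\mapsto\ph(T_sx)$ in $AP(S)$ (via $\mathcal{R}\mathcal{O}$ rather than $\mathcal{L}\mathcal{O}$ --- $\ell_tf(s)=\ph(T_t(T_sx))$, whereas $r_tf(s)=\ph(T_s(T_tx))$ is the orbit function of $T_tx$; the paper's remark that $AP$ membership can be tested on either orbit repairs this index slip). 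But the argument then rests on a false premise: it is \emph{not} true that a LIM on $AP(S)$ is automatically two-sided. A two-element right-zero semigroup ($xy=y$) is extremely left amenable, so every mean on $\ell^\infty(S)=AP(S)$ is a LIM, yet no RIM exists; correspondingly the kernel of $S^a$ is a right-zero semigroup, not a group, it contains several non-central idempotents, and the Haar-type structure you invoke is absent. This invalidates the right-invariance used to get $\Psi\circ T_s=\Psi$, the claim that $\Kc(S^a)$ (hence $\Kc(\Sigma)$) is a compact group, and the centrality of $e$ --- i.e., the entire ``reduction via the enveloping semigroup.''

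Even granting that reduction, the proof is not finished where it matters most. The set $K_e=e(K)$ need not be convex, since $e$ is only a pointwise-weak limit of nonexpansive (not affine) maps, and a compact group acting by isometries on a compact metric space can be fixed-point free (rotations of a circle); so landing on ``a compact group of $Q$-isometries of $K_e$'' has not, by itself, produced anything. The step that actually distinguishes (E) from (E$'$) --- extracting a common fixed point \emph{without} normal structure, using separability --- is exactly the step you defer (``I expect this last step to be the main obstacle \dots the delicate heart of the argument''). Gesturing at ``a minimal-invariant-set / Chebyshev-center argument'' does not close this, because the Chebyshev-center iteration is precisely the device whose termination normal structure is designed to guarantee. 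As written, the proposal is a plausible reduction scheme with an incorrect structural input and with its decisive step missing; you would need to consult the actual argument of \cite{L-Z1}, where separability is used substantively, to complete it.
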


We wonder whether one can remove the separability condition on $S$ in Theorems~\ref{F}, \ref{Multiplicative} and \ref{AP for separable S}.
When $n= 1$ we can answer this question for Theorem~\ref{Multiplicative} affirmatively (see \cite[Theorem~5.3]{L-Z2}).
\begin{thm}\label{WAP MLIM}
$WAP(S)$ has a multiplicative LIM if and only if whenever $S$ is a separately continuous and quasi-equicontinuous action on a compact Hausdorff space $X$, then $X$ has a common fixed point for $S$.
\end{thm}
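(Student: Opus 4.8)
The plan is to establish the equivalence by proving the two implications separately, with the fixed point side following a Markov–Kakutani / Day-type averaging argument and the converse following the standard construction of an invariant mean from a fixed point of the action on a function space. First I would prove the forward direction: assume $WAP(S)$ has a multiplicative LIM $\phi$, and suppose $S$ acts separately continuously and quasi-equicontinuously on a compact Hausdorff space $X$. Pick any $x_0 \in X$ and consider the orbit map $s \mapsto sx_0$; for each $f \in C(X)$ the function $s \mapsto f(sx_0)$ lies in $WAP(S)$, because quasi-equicontinuity of the action forces $\overline{S}^{\,p}$ (closure in $X^X$) to be a compact right topological semigroup of continuous maps, and pulling back $f$ along the orbit map sends $C(X)$ into $WAP(S)$ — this is essentially the Ellis/Glicksberg–deLeeuw characterisation of weak almost periodicity via enveloping semigroups. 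Applying $\phi$ to $s\mapsto f(sx_0)$ defines a state on $C(X)$, hence a regular Borel probability measure $\mu$ on $X$; since $\phi$ is \emph{multiplicative}, $\mu$ is multiplicative on $C(X)$ and therefore a point mass $\delta_{x^*}$ for some $x^* \in X$. Left invariance of $\phi$ then gives $f(tx^*) = f(x^*)$ for all $t \in S$ and all $f \in C(X)$, so $tx^* = x^*$; thus $x^*$ is a common fixed point.

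Next I would prove the converse: assume the fixed point property for quasi-equicontinuous actions on compact Hausdorff spaces, and produce a multiplicative LIM on $WAP(S)$. The natural candidate for $X$ is the weakly almost periodic compactification $S^w$, the spectrum of $WAP(S)$, which is a compact Hausdorff (semitopological) semigroup on which $S$ acts by left translation $m \mapsto s\cdot m$. The key points to verify are that this action is separately continuous — immediate from the definition of the multiplication on $S^w$ and joint continuity in each variable separately — and that it is quasi-equicontinuous: the closure of $\{\text{left translations by } s\}$ in $(S^w)^{S^w}$ consists of the maps $m \mapsto n\cdot m$ for $n \in S^w$, and each such map is continuous precisely because multiplication in $S^w$ is separately continuous (so that all these maps are continuous). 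Granting quasi-equicontinuity, the hypothesis yields a common fixed point $n_0 \in S^w$ with $s\cdot n_0 = n_0$ for all $s \in S$; this $n_0$, viewed as a multiplicative linear functional on $WAP(S)$, satisfies $\langle n_0, \ell_s f\rangle = \langle s\cdot n_0, f\rangle = \langle n_0, f\rangle$, i.e. it is a multiplicative LIM.

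The main obstacle I expect is the quasi-equicontinuity verification in the converse direction, i.e. checking that left translation by elements of $S$ on the WAP-compactification $S^w$ has the property that its pointwise closure in $(S^w)^{S^w}$ consists only of continuous self-maps. One must be careful that $S^w$ is only a \emph{semitopological} semigroup (multiplication need not be jointly continuous), so the continuity of each limit map $m \mapsto n\cdot m$ has to be extracted from separate continuity plus compactness — this is exactly the Grothendieck/Ellis style argument that on a compact space, a pointwise limit of a net drawn from a weakly relatively compact set of continuous functions is continuous, applied coordinatewise. A secondary technical point is making the pullback claim $f \in C(X) \Rightarrow (s\mapsto f(sx_0)) \in WAP(S)$ fully rigorous for a general separately-continuous quasi-equicontinuous action; here one invokes that quasi-equicontinuity makes the enveloping semigroup $\overline{S}^{\,p}$ a compact set of continuous maps, so the orbit-pullback of $f$ has relatively compact left-orbit in the weak topology, which is the defining property of $WAP(S)$. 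The absence of a separability hypothesis (the improvement over Theorem~\ref{Multiplicative} with $n=1$) means one cannot pass to a metrizable quotient and must argue with nets throughout; this is where the real work lies compared with the $n\ge 2$ cases.
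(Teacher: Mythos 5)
The paper is a survey and gives no proof of this theorem; it simply cites \cite[Theorem~5.3]{L-Z2}, so there is no in-paper argument to compare against. Your proposal is correct and follows what is essentially the standard (and presumably the cited) route: in the forward direction, quasi-equicontinuity plus Grothendieck's criterion shows the orbit pullback $s\mapsto f(sx_0)$ lands in $WAP(S)$, so the multiplicative LIM induces a multiplicative state on $C(X)$, i.e.\ a point mass at a common fixed point; in the converse, the fixed point property is applied to left translation on $S^w$, whose quasi-equicontinuity you correctly reduce to the introversion of $WAP(S)$ and the separate continuity of multiplication on the compact semitopological semigroup $S^w$. I see no gap.
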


The bicyclic semigroup is the semigroup generated by a unit $e$ and two more elements $p$ and $q$ subject to the relation $pq = e$. We denote it by $S_1 = \la e, p, q \;| \;pq = e\ra$. The semigroup generated by a unit $e$ and three more elements $a, b$ and $c$ subject to the relations $ab = ac = e$ is denoted by $S_2 = \la e, a, b, c \;|\; ab = e, ac = e\ra$; and the semigroup generated by a unit $e$ and four more elements $a, b, c,d$ subject to the relations $ac = bd = e$ is denoted by $S_{1,1} = \la e, a, b, c,d \;| \;ac = e, bd = e\ra$. We call $S_2$ and $S_{1,1}$ partially bicyclic semigroups. Duncan and Namioka showed in \cite{D-N} that $S_1$ is an amenable semigroup by revealing the maximal group homomorphic image of $S_1$. The authors gave a direct proof to the result in \cite{L-Z1} by constructing a left and a right invariant mean on $\ell^{\infty}(S_1)$. The partially bicyclic semigroups $S_2$ and $S_{1,1}$ were also investigated in \cite{L-Z1}. The results can be summarized as follows.

\begin{thm}\label{bicyclic} 
Regarding the semigroups $S_1$, $S_2$ and $S_{1,1}$, the following properties hold.
\begin{enumerate}
\item The bicyclic semigroup $S_1$ is amenable;
\item The partially bicyclic semigroup $S_{1,1}$ is neither left nor right amenable, while the partially bicyclic semigroup $S_2$ is right amenable but not left amenable;
\item Both $AP(S_{1,1})$ and $AP(S_2)$ have an invariant mean;
\item $WAP(S_2)$ has a LIM, while $WAP(S_{1,1})$ has no LIM.
\end{enumerate}
\end{thm}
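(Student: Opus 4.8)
The four assertions are essentially independent. For (1) and the positive part of (2) I would use explicit normal forms. Every element of $S_1$ is uniquely of the form $q^mp^n$ with $m,n\ge 0$, and each of the four one-sided translation operators (by $p$ and by $q$) acts on the pair $(m,n)$ by a unit shift in one coordinate, altered only on the boundary line $\{m=0\}$ or $\{n=0\}$. One then checks that the iterated Banach limit $M(f)=\operatorname{LIM}_m\operatorname{LIM}_n f(q^mp^n)$ is a \emph{two-sided} invariant mean on $\ell^\infty(S_1)$: each of the four invariances reduces to the shift-invariance of a Banach limit, the boundary terms being absorbed by one further shift. (Equivalently, as Duncan--Namioka observed, the maximal group homomorphic image of $S_1$ is $\mathbb{Z}$, hence amenable.) For $S_2=\langle e,a,b,c\mid ab=ac=e\rangle$, whose elements are uniquely $wa^k$ with $w$ a word in $\{b,c\}$, the Banach limit along the powers of $a$, namely $M(f)=\operatorname{LIM}_k f(a^k)$, is right-invariant — one needs only invariance under $r_a,r_b,r_c$, which is immediate from the multiplication rules — so $S_2$ is right amenable. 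That $S_2$ is \emph{not} left amenable needs no mean: the principal right ideals $bS_2$ and $cS_2$ consist of the reduced words beginning with $b$, respectively $c$, hence are disjoint, so $S_2$ is not left reversible, and for a discrete semigroup left amenability implies left reversibility. Finally the maximal group homomorphic image of $S_{1,1}=\langle e,a,b,c,d\mid ac=bd=e\rangle$ is the free group $F_2$ (with $c=a^{-1}$, $d=b^{-1}$ the two defining relations become vacuous); since left, resp. right, amenability of a semigroup passes to its group homomorphic images (push a one-sided invariant mean forward along the surjection) and $F_2$ is non-amenable, $S_{1,1}$ is neither left nor right amenable. This proves (1) and (2).

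For (3) I would identify the almost periodic compactifications. Since each of $S_1,S_2,S_{1,1}$ has an identity and, as recalled above, the multiplication on the almost periodic compactification is jointly continuous, $S_2^a$ and $S_{1,1}^a$ are compact topological monoids with identity $\widehat e$ (and $AP(S)=C(S^a)$). The key lemma is that in a compact topological monoid every right-invertible element is invertible and the group of units is closed: if $xy=1$, a convergent subnet of $(x^n)$ has an idempotent limit which, having a right inverse, must equal $1$; hence $x^{n_\alpha}\to 1$, and a convergent subnet of $(x^{n_\alpha-1})$ gives a two-sided inverse of $x$. In $S_2^a$ the element $\widehat a$ is right-invertible (by $\widehat b$), hence invertible, so $\widehat b=\widehat c=\widehat a^{-1}$ and every generator is invertible; therefore $S_2^a$ coincides with its closed group of units, a compact topological group — namely the Bohr compactification $\mathbb{Z}^a$. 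The same reasoning makes $S_{1,1}^a$ the compact group $F_2^a$. Haar measure on these compact groups restricts to the (unique, two-sided) invariant mean on $AP(S_2)=C(S_2^a)$ and on $AP(S_{1,1})=C(S_{1,1}^a)$; this is (3).

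For (4) the weakly almost periodic compactification $S^w$ is only separately continuous, so the lemma of the previous step no longer forces it to be a group — indeed $WAP(S_2)$ and $WAP(S_{1,1})$ both strictly contain $c_0$ of the semigroup (each point-indicator $\chi_{\{t\}}$ is weakly almost periodic, its left orbit consisting of indicators of finite sets escaping to infinity) together with the pullbacks of $WAP$ of the group image. That $WAP(S_2)$ has a LIM I would get either by verifying the fixed point property (F) of Theorem~\ref{F} directly for $S_2$ (legitimate, $S_2$ being countable hence separable), or by a structural analysis of $S_2^w$ showing that its minimal ideal has a unique minimal left ideal — equivalently, that $S_2^w$ admits a left-invariant probability measure — via the homomorphism $S_2^w\to\mathbb{Z}^w$ onto the weakly almost periodic compactification of $\mathbb{Z}$, whose minimal ideal is the group $\mathbb{Z}^a$, together with the comparatively mild way in which $WAP(S_2)$ exceeds the pullbacks from $\mathbb{Z}$.

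The remaining statement — that $WAP(S_{1,1})$ has \emph{no} LIM — is the main obstacle, and it is a property of $S_{1,1}$ itself rather than of any proper quotient: it cannot follow from the failure of left reversibility alone, which $S_{1,1}$ shares with $S_2$ although $WAP(S_2)$ does have a LIM, nor from the non-amenability of $F_2$ alone, since $WAP(F_2)$ has a LIM ($F_2$ being a group, hence left reversible); the obstruction is the interaction of the disjointness $\overline{cS_{1,1}}\cap\overline{dS_{1,1}}=\emptyset$ with the free-group quotient. A first reduction is that any LIM $m$ on $WAP(S_{1,1})$ must vanish on $c_0(S_{1,1})$: left-invariance gives $m(\chi_{\{w\}})=m(\ell_c\chi_{\{cw\}})=m(\chi_{\{cw\}})$ and likewise for $d$, so $m(\chi_{\{e\}})=m(\chi_{\{v\}})$ for every word $v$ in $\{c,d\}$ — infinitely many disjoint singletons — forcing the common value to be $0$. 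The substance is then to rule out a LIM concentrated ``at infinity.'' The cleanest formulations of what must be shown are: by Theorem~\ref{F} (applicable by separability), produce a weakly separately continuous, weakly quasi-equicontinuous, $Q$-non-expansive action of $S_{1,1}$ on a weakly compact convex set with no common fixed point — necessarily one not factoring through $F_2$, so genuinely involving the non-invertible generators $c,d$ (the affine push-forward action of $F_2$ on the probability measures on its Gromov boundary $\partial F_2$ is the natural template, though fitting it to the quasi-equicontinuity and weak-compactness hypotheses requires care); or, dually, compute the minimal ideal of $S_{1,1}^w$ far enough to exhibit two distinct minimal left ideals — the closures of $cS_{1,1}$ and $dS_{1,1}$ staying separated there because the free-group structure prevents their amalgamation — so that no left-invariant probability measure on $S_{1,1}^w$ exists. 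Carrying out either of these is the technical heart of the matter.
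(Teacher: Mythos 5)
The paper itself offers no proof of this theorem: it is a survey statement whose proof is deferred to \cite{L-Z1}, with only the method for part (1) indicated (``constructing a left and a right invariant mean on $\ell^\infty(S_1)$''). Measured against what such a proof must contain, your treatment of (1)--(3) is correct and essentially the standard route: the iterated Banach limit $\mathrm{LIM}_m\mathrm{LIM}_n f(q^mp^n)$ does give a two-sided invariant mean on $\ell^\infty(S_1)$ (each of the four translations is a unit shift in one coordinate off a boundary that the Banach limit ignores); the Banach limit along $\{a^k\}$ is right invariant on $\ell^\infty(S_2)$; the disjointness $bS_2\cap cS_2=\emptyset$ kills left reversibility and hence left amenability; and pushing a one-sided invariant mean forward to the free-group quotient disposes of $S_{1,1}$. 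For (3), your key lemma --- in a compact monoid with \emph{jointly} continuous multiplication a right-invertible element is invertible and the unit group is closed --- is true (though note it is the closure of $\{x^n\}$ that contains an idempotent, not every convergent subnet that has an idempotent limit), and it correctly forces $S_2^a$ and $S_{1,1}^a$ to be compact topological groups, whose Haar measures yield the invariant means.

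The genuine gap is part (4), which is precisely the substance of the theorem: it is what resolves the two open problems cited immediately after the statement, and neither half of it is actually proved in your proposal. For ``$WAP(S_2)$ has a LIM'' you name two possible strategies (verify property (F) of Theorem~\ref{F}, or analyse the minimal ideal of $S_2^w$) without executing either; verifying (F) for all weakly quasi-equicontinuous nonexpansive actions of $S_2$ is not easier than producing the mean, and the actual argument needs a genuinely WAP-specific input --- e.g.\ Grothendieck's double-limit criterion applied along the powers $a^n$ to show that a Banach limit along $\{a^n\}$, which you already know is \emph{right} invariant on $\ell^\infty(S_2)$, becomes \emph{left} invariant when restricted to $WAP(S_2)$. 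For ``$WAP(S_{1,1})$ has no LIM'' you explicitly stop at the statement of the problem: one must exhibit a concrete function in $WAP(S_{1,1})$ (verified to be weakly almost periodic via the double-limit criterion) on which left invariance is contradictory, or equivalently carry out the analysis of the minimal left ideals of $S_{1,1}^w$; nothing in your sketch produces such a witness. Your preliminary reduction that a LIM must vanish on finitely supported functions also silently assumes $\chi_{\{w\}}\in WAP(S_{1,1})$, which itself requires an argument. As it stands the proposal establishes (1)--(3) but not (4).
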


We note that both $S_2$ and $S_{1,1}$ are not left reversible. The above theorem shows that there are semigroups $S$ such that $AP(S)$ has a LIM but $WAP(S)$ has no LIM, and that there are semigroups $S$ such that $WAP(S)$ has a LIM but $S$ is not left reversible. So the theorem resolved problem 27 raised in \cite{Lau90} and problem 1 raised in \cite{Lau76}. As a consequence, for a discrete semigroup $S$, the diagram (\ref{E discrete}) may be completed to:
\[
\text{left amenable}\,
\begin{smallmatrix}
\Rightarrow\\
\nLeftarrow
\end{smallmatrix}\,
 \text{left reversible}\,
\begin{smallmatrix}
\Rightarrow\\
\nLeftarrow
\end{smallmatrix}\,
WAP(S)\, \text{has LIM}\,
\begin{smallmatrix}
\Rightarrow\\
\nLeftarrow
\end{smallmatrix}\,
AP(S)\, \text{has LIM}
\]

Consider fixed point properties (F*) and (G*), which are, respectively, the fixed point properties (F) and (G) with the words ``separately continuous'' replaced by ``jointly continuous''. Clearly,  (G*) implies (F*). The next two results were obtained in \cite{L-Z1}.

\begin{thm}
Let $S$ be a separable semitopological semigroup. Then fixed point property (F*) holds if and only if the space $WAP(S)\cap LUC(S)$ has a LIM.
\end{thm}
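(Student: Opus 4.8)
The plan is to follow the proof of Theorem \ref{F} in \cite{L-Z1}, inserting the two modifications forced by the statement: $WAP(S)$ is replaced throughout by $Y:=WAP(S)\cap LUC(S)$, and ``separately continuous'' by ``jointly continuous''. The guiding idea is that membership in $LUC(S)$ is exactly what upgrades separate continuity to joint continuity in both implications, so the only genuinely new pieces will be two short ``$LUC(S)\Rightarrow$ joint continuity'' arguments, and everything else can be imported.

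For the ``if'' direction I would argue as follows. Suppose $Y$ has a LIM $m$, and let $S$ act on a weakly compact convex subset $K$ of a separated locally convex space $(E,Q)$ with the action weakly jointly continuous, weakly quasi-equicontinuous and $Q$-nonexpansive. For $x\in K$ and a continuous linear functional $\phi$ on $(E,Q)$ one forms the orbit function $f_{\phi,x}(s)=\phi(sx)$ on $S$; as in \cite{L-Z1}, weak quasi-equicontinuity together with weak compactness of $K$ gives $f_{\phi,x}\in WAP(S)$. The new observation is that $f_{\phi,x}\in LUC(S)$ as well, because for $t,t_0\in S$
\[ \|\ell_t f_{\phi,x}-\ell_{t_0}f_{\phi,x}\|_\infty=\sup_{s\in S}|\phi(t(sx))-\phi(t_0(sx))|\le\sup_{y\in K}|\phi(ty)-\phi(t_0 y)|, \]
and joint continuity of $(t,y)\mapsto\phi(ty)$ on $S\times K$ with $K$ weakly compact makes the right-hand side tend to $0$ as $t\to t_0$ (a standard Ascoli-type argument). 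Hence all the $f_{\phi,x}$ and their left translates lie in $Y$ (which is left-translation invariant, being an intersection of two such spaces), so $m$ may be applied to them. From here I would reproduce the proof of Theorem \ref{F}: pass to a minimal nonempty weakly compact convex $S$-invariant subset $K_0\subseteq K$, introduce the $Q$-nonexpansive ``barycentre'' map $A:K_0\to K_0$ determined by $\phi(Ax)=m(f_{\phi,x})$ (well defined into $K_0$ since means are weak* limits of convex combinations of point evaluations, so $Ax\in\overline{co}^{\,w}(Sx)\subseteq K_0$), and conclude from quasi-equicontinuity and the separability of $S$ that $K_0$ is a singleton, hence a common fixed point for $S$. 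The only change is that $m$ is needed only on $WAP(S)\cap LUC(S)$, which is exactly where all the $f_{\phi,x}$ live.

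For the ``only if'' direction I would take $M$ to be the set of means on $Y$, a weak* compact convex subset of $Y^*$ on which $S$ acts affinely by $T_s m=m\circ\ell_s$ (using $\ell_s Y\subseteq Y$ and $\ell_{s_2}\ell_{s_1}f=\ell_{s_1s_2}f$), and equip $Y^*$ with the family $Q$ of seminorms $p_{B_{f_0}}(\mu)=\sup_{f\in B_{f_0}}|\mu(f)|$, one for each $f_0\in Y$, where $B_{f_0}$ is the weakly closed convex hull in $C_b(S)$ of $\{\ell_s f_0:\ s\in S\}\cup\{f_0\}$. Since $Y\subseteq WAP(S)$, each $B_{f_0}$ is weakly compact (the closed convex hull of a weakly compact set is weakly compact) and $\ell_s$-invariant; the resulting topology lies between the weak* and Mackey topologies of $Y^*$, so its associated weak topology is again the weak* topology, $M$ is a weakly compact convex subset of $(Y^*,Q)$, and each $T_s$ is $Q$-nonexpansive. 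Weak quasi-equicontinuity of the action follows from $Y\subseteq WAP(S)$ exactly as in \cite{L-Z1}. The new point — and the reason $WAP(S)\cap LUC(S)$ is the correct space — is that the action is weakly \emph{jointly} continuous: for $f\in Y\subseteq LUC(S)$,
\[ |m(\ell_s f)-m_0(\ell_{s_0}f)|\le\|\ell_s f-\ell_{s_0}f\|_\infty+|(m-m_0)(\ell_{s_0}f)|\longrightarrow 0 \]
as $(s,m)\to(s_0,m_0)$ in $S\times M$. Then (F*) applies to this action and yields a common fixed point, i.e.\ a LIM on $Y$.

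I expect no new difficulty; the main point requiring care is that the hard core of the proof of Theorem \ref{F} — the minimal-set/barycentre argument that, in the ``if'' direction, concludes $K_0$ is a single point from quasi-equicontinuity and separability alone, with no normal structure available — still runs word for word after the function space is narrowed to $WAP(S)\cap LUC(S)$ and the admissible actions to jointly continuous ones. Since shrinking the class of actions only weakens what one must verify, and shrinking the function space is harmless once the relevant orbit functions are known to lie in it, it should go through unchanged.
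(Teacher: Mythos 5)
Your proposal is correct and follows essentially the same route as the source \cite{L-Z1}, which this survey cites for the theorem without reproducing a proof: the ``only if'' direction is the canonical affine action of $S$ on the weak* compact set of means on $WAP(S)\cap LUC(S)$, made $Q$-nonexpansive by Mackey-type seminorms built from the weakly compact left orbits, with $LUC$-membership supplying exactly the joint continuity needed to invoke (F*); and the ``if'' direction reduces to the argument for Theorem~\ref{F} once your Ascoli-type compactness argument shows that the coefficient functions $s\mapsto\phi(sx)$ of a jointly continuous action on a weakly compact set lie in $LUC(S)$ as well as in $WAP(S)$. The one point worth making explicit in the reduction is that the minimal-set machinery applies the mean not only to the functions $f_{\phi,x}$ but to the C*-algebra they generate (via the induced mean on the enveloping semigroup); since $WAP(S)$ and $LUC(S)$ are both C*-subalgebras of $C_b(S)$, that algebra still lies in $WAP(S)\cap LUC(S)$, so the importation is indeed harmless.
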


\begin{thm}
Let $S$ be a left reversible and metrizable semitopological semigroup. Then $S$ has the fixed point property (G*). In particular, $WAP(S)\cap LUC(S)$ has a LIM.
\end{thm}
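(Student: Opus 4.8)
The plan is to establish the fixed point property (G*) itself; the final assertion is then immediate, because (G*) trivially implies (F*), and --- as in the standard passage from a non-expansive fixed point property to an invariant mean --- the forward half of the preceding theorem, that (F*) forces a LIM on $WAP(S)\cap LUC(S)$, requires no separability (separability is used there only for the converse): one applies (F*) to the canonical affine action $m\mapsto m\circ\ell_s$ of $S$ on the weak*-compact convex set of means of $WAP(S)\cap LUC(S)$, which is jointly weak* continuous because $s\mapsto\ell_s f$ is norm continuous for every $f\in LUC(S)$. So the whole content is: given a weakly jointly continuous, $Q$-nonexpansive action of a left reversible metrizable semitopological semigroup $S$ on a weakly compact convex subset $K$ of a separated locally convex space $(E,Q)$, produce a common fixed point.

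First I would pass, using weak compactness of $K$ and Zorn's lemma, to a nonempty weakly compact convex $S$-invariant set $C\subseteq K$ that is minimal for these properties. By minimality $\overline{co}\,\{sx:s\in S\}$ (weak closure) equals $C$ for every $x\in C$, so it suffices to show $C$ is a single point. If not, then $d:=\sup\{p(x-y):x,y\in C\}>0$ for some $p\in Q$ and, exactly as for a single nonexpansive map, $C$ is diametral: $\sup\{p(x-y):y\in C\}=d$ for every $x\in C$.

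The next step is to bring in left reversibility. It says $\overline{aS}\cap\overline{bS}\neq\emptyset$ for all $a,b\in S$, so ordering $S$ by $s\preceq t$ iff $t=s$ or $t\in\overline{sS}$ makes $(S,\preceq)$ directed, any point of $\overline{aS}\cap\overline{bS}$ being an upper bound of $a$ and $b$. Fixing $x_0\in C$, one checks (using weak continuity of $s\mapsto sx_0$) that the sets $C_a:=\overline{co}\,\{sx_0:s\in aS\cup\{a\}\}$ (weak closure) decrease along $\preceq$ and are nonempty weakly compact convex subsets of $C$, so their directed intersection is nonempty; a bookkeeping argument, again invoking weak joint continuity, then yields an $S$-invariant set among these, which by minimality is all of $C$. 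Combining this with the diametral property, $Q$-nonexpansiveness, and an asymptotic-centre estimate along the directed net $(S,\preceq)$, one should produce a nonempty weakly compact convex $S$-invariant $C'\subseteq C$ whose $p$-diameter is strictly less than $d$, contradicting minimality of $C$. Metrizability of $S$ enters precisely here: it lets the net over $(S,\preceq)$ be replaced by a cofinal sequence, so that the repeated-limit manipulations and the sequential extraction of asymptotic centres --- delicate over an arbitrary directed set --- are legitimate, as in the discrete case.

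The main obstacle is that (G*) carries \emph{no} normal-structure hypothesis: the classical asymptotic-centre argument yields a proper invariant subset of a minimal $C$ only when $C$ has $Q$-normal structure, and diametral minimal sets genuinely occur (e.g.\ for the Alspach isometry in $L^1$ --- an example that, perforce, is not weakly continuous, so that weak continuity of the action is indispensable for the step above). What must carry the argument is left reversibility itself, via the mechanism --- due to Lim, and to Hsu \cite{Hsu} in the discrete setting --- whereby the asymptotic centre of an orbit indexed by the left-reversibility order is again $S$-invariant and strictly smaller. Transcribing this to a weakly jointly continuous action of a metrizable semitopological semigroup --- verifying $S$-invariance of the relevant intersection and centre sets under only weak (joint) continuity of the action, and making the net-to-sequence reduction rigorous via a compatible metric on $S$ --- is where the real effort lies.
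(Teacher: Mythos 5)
Your reduction of the ``in particular'' clause is fine: the implication (G*) $\Rightarrow$ (F*) $\Rightarrow$ ``$WAP(S)\cap LUC(S)$ has a LIM'' only uses the easy direction of the (F*) characterization, which does not need separability. The problem is with the main body of the argument, which is left unexecuted at exactly the point where it matters, and where the one concrete role you assign to metrizability is wrong. Metrizability of $S$ as a topological space does \emph{not} let you replace the net over the left-reversibility order $(S,\preceq)$ by a cofinal sequence: that order need have no countable cofinal subset even when $S$ is metrizable (indeed even when $S$ is discrete and countable the reduction is to the order, not to the topology). The mechanism actually used in \cite{L-Z1} --- and signalled in Section 3 of this survey, where the same device is invoked for Theorem 3.9 --- is \cite[Lemma~5.2]{L-Z1}: a metrizable left reversible semitopological semigroup is \emph{strongly left reversible}, i.e.\ $S=\bigcup_{\al}S_\al$ for a directed family of \emph{countable} subsemigroups $S_\al$ each satisfying $\overline{aS_\al}\cap\overline{bS_\al}\neq\emptyset$. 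One proves (G*) for each countable (hence separable) $S_\al$, where sequential asymptotic-center arguments in the spirit of Lim and Hsu are legitimate; each fixed-point set $F(S_\al)$ is then a nonempty weakly closed (the maps are weakly continuous) subset of the weakly compact set $K$, the family $\{F(S_\al)\}$ has the finite intersection property by directedness, and $\bigcap_\al F(S_\al)=F(S)\neq\emptyset$. Your proposal never performs this decomposition, and the substitute it offers does not work.

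There is also a false intermediate claim in your minimal-set setup. For a non-affine nonexpansive action, $\overline{co}\,\{sx:s\in S\}$ need not be $S$-invariant ($T_t$ does not carry convex combinations to convex combinations), so minimality does not give $\overline{co}\,\{sx:s\in S\}=C$; what minimality gives is $C=\overline{co}\,(SC)$. Relatedly, ``exactly as for a single nonexpansive map, $C$ is diametral'' does not transfer: the single-map proof rests on $C=\overline{co}\,(TC)$ together with $\sup_{y\in C}p(Tx-y)=\sup_{z\in C}p(Tx-Tz)\leq\sup_{z\in C}p(x-z)$, and for a fixed $t$ in a general semigroup one has no analogue of $C=\overline{co}\,(T_tC)$. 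Establishing a usable diametral/asymptotic-center statement in the semigroup setting without normal structure is precisely the hard content of Lim's and Hsu's work that your sketch defers; as written, the proposal identifies the difficulty but does not resolve it.
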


The following diagram summarizes the relations among the fixed point properties discussed in \cite{L-Z1}.

\begin{equation*}
\begin{array}{cccccl}
\begin{matrix}S \text{ is metrizable}\\ \text{\& left reversible}\end{matrix} \Rightarrow &(G^*) &\Rightarrow&(F^*)& \overset{\text{s}}{\Leftrightarrow}& WAP(S)\cap LUC(S) \text{ has LIM}\\
     &\Uparrow& &\Uparrow \mathrel{\not\Downarrow} && \\
&(G)& \Rightarrow&(F)& \begin{smallmatrix}\Rightarrow\\ \nLeftarrow\end{smallmatrix} &(E) \overset{\text{s}}{\Leftrightarrow} (E') \Leftrightarrow (D) \Leftrightarrow AP(S)\text{ has LIM}\negthickspace\\
                   &&&\text{s}\Updownarrow                                & \\
&&& \begin{matrix}WAP(S)\\ \text{has LIM}\end{matrix}                        &   &
\end{array}
\end{equation*}
where ``s'' means the implication is under the condition that the semigroup is separable.

Let $S$ be a semitopological semigroup. The set MM$(S)$ of all multiplicative means on $C_b(S)$ is a total subset of $C_b(S)^*$. It generates a locally convex topology $\tau_{MM} = \sigma(C_b(S), \text{MM}(S))$ on $C_b(S)$. A function $f\in C_b(S)$ is called \emph{left multiplicatively continuous} if the mapping $s\mapsto \ell_sf$ from $S$ into ($C_b(S),\tau_{MM})$ is continuous \cite{Mitch_LUC}.  Let $LMC(S)$ be the subspace of $C_b(S)$ consisting of all left multiplicatively continuous functions on $S$. Then $LMC(S)$ is a left invariant closed subalgebra of $C_b(S)$ containing the constant function. The following result is due to  T. Mitchell \cite{Mitch_LUC}. An alternative proof was given in \cite{L-Z2}.

\begin{thm}\label{Thm Sr}
Let $S$ be a semitopological semigroup. Then
\begin{enumerate}
\item $LUC(S)$ has a multiplicative left invariant mean if and only if 
\begin{description}
\item[($F_j$)] every jointly continuous representation of $S$ on a nonempty compact Hausdorff space has a common fixed point for $S$;
\end{description}
\item $LMC(S)$ has a multiplicative left invariant mean if and only if
\begin{description}
\item[($F_{s}$)] every separately continuous representation of $S$ on a nonempty compact Hausdorff space has a common fixed point for $S$.
\end{description}
\end{enumerate}
\end{thm}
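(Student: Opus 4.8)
The plan is to deduce both equivalences from Gelfand theory, by passing to the spectrum of the relevant function algebra and matching a multiplicative left invariant mean with a common fixed point of the canonical $S$-action on that spectrum. The two parts run in parallel and differ only in which continuity hypothesis must be verified; these are precisely the properties the definitions of $LUC(S)$ and $LMC(S)$ are built to provide.

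For the ``only if'' directions, assume $LUC(S)$ (resp.\ $LMC(S)$) carries a multiplicative left invariant mean $\mu_0$, and let $\{T_s\}$ be a jointly (resp.\ separately) continuous representation of $S$ on a nonempty compact Hausdorff space $K$. Fix $x_0\in K$ and, for $g\in C(K)$, put $\hat g(s)=g(T_sx_0)$; joint (resp.\ separate) continuity makes $\hat g\in C_b(S)$, the map $g\mapsto\hat g$ is a unital algebra homomorphism, and a direct check gives $\ell_s\hat g=\widehat{g\circ T_s}$. First I would verify that $\hat g$ lies in the correct algebra. For $LUC(S)$ this reduces, via $\|\ell_s\hat g-\ell_{s'}\hat g\|\le\sup_{y\in K}|g(T_sy)-g(T_{s'}y)|$, to the fact that on the compact space $K$ the continuous function $(s,y)\mapsto g(T_sy)$ is ``uniformly'' continuous in $s$, which is a standard net-and-subnet argument. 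For $LMC(S)$ one instead notes that for each $\nu\in\mathrm{MM}(S)$ the functional $g\mapsto\nu(\hat g)$ is multiplicative on $C(K)$, hence evaluation at some $y_\nu\in K$, so $s\mapsto\nu(\ell_s\hat g)=g(T_sy_\nu)$ is continuous by separate continuity; thus $\hat g\in LMC(S)$. Now $\phi(g):=\mu_0(\hat g)$ is a nonzero multiplicative functional on $C(K)$, so $\phi$ is evaluation at some $z\in K$, and for all $g$ and $s$,
\[
g(T_sz)=\phi(g\circ T_s)=\mu_0(\widehat{g\circ T_s})=\mu_0(\ell_s\hat g)=\mu_0(\hat g)=g(z),
\]
using left invariance of $\mu_0$; since $C(K)$ separates points, $T_sz=z$.

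For the ``if'' directions, assume $(F_j)$ (resp.\ $(F_s)$). Let $X$ be the spectrum of $LUC(S)$ (resp.\ $LMC(S)$), a compact Hausdorff space, on which $S$ acts by $(s\cdot\mu)(f)=\mu(\ell_sf)$. Since the algebra is left invariant and each $\ell_s$ is a unital homomorphism, $s\cdot\mu\in X$, and $s\cdot(t\cdot\mu)=(st)\cdot\mu$ because $\ell_t\ell_s=\ell_{st}$, so this is a representation. The crux is continuity. For $LUC(S)$: if $s_i\to s$ and $\mu_i\to\mu$ weak*, then
\[
\bigl|\mu_i(\ell_{s_i}f)-\mu(\ell_sf)\bigr|\le\|\ell_{s_i}f-\ell_sf\|+\bigl|\mu_i(\ell_sf)-\mu(\ell_sf)\bigr|\longrightarrow 0,
\]
the first term vanishing since $f\in LUC(S)$; hence the action is jointly continuous. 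For $LMC(S)$: continuity in $\mu$ is automatic, and continuity in $s$ means $s\mapsto\mu(\ell_sf)$ is continuous for each spectrum point $\mu$, which is exactly the defining condition of $LMC(S)$ once one knows every multiplicative functional on $LMC(S)$ extends to one on $C_b(S)$, so that $\tau_{MM}$ restricted to $LMC(S)$ is the weak topology induced by $X$. Applying $(F_j)$ (resp.\ $(F_s)$) yields $\mu_0\in X$ with $s\cdot\mu_0=\mu_0$, i.e.\ $\mu_0(\ell_sf)=\mu_0(f)$ for all $s,f$; as a point of the spectrum of a commutative unital C*-algebra, $\mu_0$ is automatically a mean, hence the desired multiplicative LIM.

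The Gelfand-theoretic identifications and the homomorphism checks are routine. The one delicate ingredient is verifying that the canonical $S$-action on the spectrum has the required continuity --- jointly continuous in the $LUC(S)$ case, separately continuous in the $LMC(S)$ case --- and for $LMC(S)$ this rests on the standard-but-not-quite-trivial extension of multiplicative functionals from $LMC(S)$ to $C_b(S)$, which is what lets the defining property of $LMC(S)$ be read off directly on $X$. I expect that to be the main obstacle to a clean exposition; the remainder is bookkeeping.
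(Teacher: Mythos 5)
Your argument is correct and is essentially the classical proof of Mitchell's theorem that the paper cites here; the survey itself supplies no proof, referring instead to \cite{Mitch_LUC} and to the alternative proof in \cite{L-Z2}. You correctly isolate the only two non-routine points --- the compactness argument giving $\sup_{y\in K}|g(T_{s_i}y)-g(T_sy)|\to 0$ so that $\hat g\in LUC(S)$, and the extension of characters from $LMC(S)$ to $C_b(S)$ (surjectivity of the restriction map on spectra for a unital C*-subalgebra) needed to read the defining property of $LMC(S)$ as separate continuity of the canonical action on the spectrum --- and both are handled adequately.
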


 Recall further that a function $f\in C_b(S)$ is a \emph{weakly left uniformly continuous} if the mapping $s\mapsto \ell_sf$ from $S$ into ($C_b(S),\text{wk})$ is continuous \cite{Mitch_LUC}. The set of all weakly left uniformly continuous functions on $S$ is a left invariant closed subspace of $C_b(S)$, denoted by $WLUC(S)$. Denote by M$(S)$ the set of all means 
on $C_b(S)$ and let $\tau_M = \sigma(C_b(S), \text{M}(S))$. Since $C_b(S)^*$ is linearly  spanned by $\text{M}(S)$, we have that $f\in WLUC(S)$ if and only if the mapping $s\mapsto \ell_sf$ from $S$ into ($C_b(S),\tau_M)$ is continuous. The following theorem is also due to T. Mitchell \cite{Mitch_LUC}. An alternative proof was given in \cite{L-Z2}.

\begin{thm}\label{Thm Sr convex}
Let $S$ be a semitopological semigroup. Then
\begin{enumerate}
\item $LUC(S)$ has a left invariant mean if and only if
\begin{description}\label{affine Fc}
\item[($F_{ja}$)] every jointly continuous affine representation of $S$ on a nonempty convex compact subset of a separated locally convex topological vector space has a common fixed point for $S$;
\end{description}
\item $WLUC(S)$ has a left invariant mean if and only if
\begin{description}
\item[($F_{sa}$)] every separately continuous affine representation of $S$ on a compact convex subset of a separated locally convex topological vector space has a common fixed point for $S$.
\end{description}
\end{enumerate}
\end{thm}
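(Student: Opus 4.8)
The plan is to establish both equivalences simultaneously by the classical Day--Mitchell argument, the pair consisting of $LUC(S)$ and norm continuity playing in (1) the role that $WLUC(S)$ and weak continuity play in (2). Throughout, write $X=LUC(S)$ for (1) and $X=WLUC(S)$ for (2); in both cases $X$ is a left-invariant norm-closed subspace of $C_b(S)$ containing the constants, so the set $M$ of means on $X$ is a weak$^{*}$-compact convex subset of $X^{*}$ on which $S$ acts affinely by $(s\cdot m)(f)=m(\ell_s f)$, each $m\mapsto s\cdot m$ being weak$^{*}$-continuous (precomposition with the bounded operator $\ell_s$, which preserves $X$). To obtain the invariant mean from the fixed point property I would check the remaining continuity: in (1) the action is jointly weak$^{*}$-continuous because $m_\alpha(\ell_{s_\alpha}f)-m(\ell_s f)=m_\alpha(\ell_{s_\alpha}f-\ell_s f)+(m_\alpha-m)(\ell_s f)$, where the first term is dominated by $\|\ell_{s_\alpha}f-\ell_s f\|_\infty\to 0$ since $f\in LUC(S)$, while the second tends to $0$ by weak$^{*}$ convergence; in (2), for fixed $m$ extended by Hahn--Banach to $\widetilde m\in C_b(S)^{*}$, one has $\la s\cdot m,f\ra=\la\widetilde m,\ell_s f\ra$, continuous in $s$ because $s\mapsto\ell_s f$ is weakly continuous into $C_b(S)$ by the definition of $WLUC(S)$. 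Applying $(F_{ja})$, resp.\ $(F_{sa})$, to the representation on $M$ yields a fixed mean, i.e.\ a LIM on $X$.

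For the converse, let $m$ be a LIM on $X$ and let $S$ act on a compact convex set $K$ in a separated locally convex space by affine maps $T_s$, jointly, resp.\ separately, continuous. First I would reduce to a linear action: embedding $K$ affinely and homeomorphically into $A(K)^{*}$ --- the dual, with its weak$^{*}$ topology, of the Banach space $A(K)$ of continuous affine functions on $K$ --- via $x\mapsto\mathrm{ev}_x$, each $T_s$ becomes the restriction to $K$ of the weak$^{*}$-continuous linear adjoint of $g\mapsto g\circ T_s$, so we may assume the $T_s$ linear and $E^{*}$ separating on $K$. Fix $x_0\in K$ and, for $\mu\in E^{*}$, set $f_\mu(s)=\la T_s x_0,\mu\ra$, a bounded function on $S$ since $K$ is compact. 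The key step is that $f_\mu\in X$. In (1), joint continuity of $(s,x)\mapsto T_s x$ and compactness of $K$ give $\sup_{x\in K}|\la T_s x-T_{s_0}x,\mu\ra|\to 0$ as $s\to s_0$ (cover $K$ by finitely many sets on which $g(s,x)=\la T_s x,\mu\ra$ varies little), hence $\|\ell_s f_\mu-\ell_{s_0}f_\mu\|_\infty\le\sup_{x\in K}|\la T_s x-T_{s_0}x,\mu\ra|\to 0$, so $f_\mu\in LUC(S)$. In (2), linearity gives $\ell_t f_\mu=f_{T_t^{*}\mu}$; the map $t\mapsto T_t^{*}\mu$ is weak$^{*}$-continuous by separate continuity, and $\nu\mapsto f_\nu$ is continuous from $(E^{*},w^{*})$ into $(C_b(S),\text{wk})$ because for every mean $n$ on $C_b(S)$ one has $n(f_\nu)=\la z_n,\nu\ra$, where $z_n\in K$ is the barycenter of $n$ along the orbit map; hence $f_\mu\in WLUC(S)$.

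With this in hand, define $\bar x$ by $\la\bar x,\mu\ra=m(f_\mu)$ for $\mu\in E^{*}$. Since $K=\bigcap_{\mu\in E^{*}}\{x:\la x,\mu\ra\le\sup_{y\in K}\la y,\mu\ra\}$ and $m$ is a mean (hence positive), a routine barycenter argument gives $\bar x\in K$. Finally, for $t\in S$ and $\mu\in E^{*}$, because $T_t$ is linear and weak$^{*}$-continuous, $\la T_t\bar x,\mu\ra=\la\bar x,T_t^{*}\mu\ra=m(f_{T_t^{*}\mu})=m(\ell_t f_\mu)=m(f_\mu)=\la\bar x,\mu\ra$ by left-invariance of $m$; as $E^{*}$ separates the points of $K$, $T_t\bar x=\bar x$, so $\bar x$ is a common fixed point for $S$. (In the complex case one runs the same computation with $\mathrm{Re}\la\cdot,\mu\ra$.)

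I expect the main obstacle to be showing that the orbit functions $f_\mu$ lie in the correct space. In (2) this is precisely what forces the function space to be $WLUC(S)$, and it relies on first linearizing the action and then identifying $n(f_\nu)$ with a barycenter evaluation, so as to get the weak$^{*}$-to-weak continuity of $\nu\mapsto f_\nu$; in (1) it is the uniform-continuity-on-the-compact-factor argument. The barycenter bookkeeping --- that $\bar x$ genuinely lands in $K$ and that $T_t$ commutes with the barycenter --- is routine, but it is cleanest after the linear reduction, since the $T_s$ are only assumed affine to begin with.
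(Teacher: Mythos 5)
Your argument is correct and is the classical Day--Mitchell duality: in one direction the affine action $(s\cdot m)(f)=m(\ell_s f)$ on the weak* compact convex set of means on $X$, with exactly the continuity checks you give; in the other, linearization through $A(K)^*$ and the barycenter identity $n(f_\nu)=\la z_n,\nu\ra$ showing the orbit functions $f_\mu$ land in $LUC(S)$, resp.\ $WLUC(S)$. The survey itself states Theorem~\ref{Thm Sr convex} without proof, attributing it to Mitchell \cite{Mitch_LUC} with an alternative proof in \cite{L-Z2}, so there is nothing in the text to diverge from; the only cosmetic point is that in part (2) the topology on $A(K)$ for which $\nu\mapsto f_\nu$ is continuous is that of pointwise convergence on $K$ rather than the ``weak*'' topology you name, but your barycenter computation establishes precisely the continuity that is needed.
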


\begin{remark}
We note that Theorem~\ref{Thm Sr convex}.(1) 
 is no longer true if one replaces ``affine'' with ``convex''. Here by a convex map (or a convex function) we mean a map (resp. a function) defined on a convex set $K$ that maps each convex subset of $K$ onto a convex set. For example any continuous function on an interval is a convex function. A counter example is as follows: Let $f$ and $g$ be two continuous functions on the unit interval $[0,1]$ such that $f$ commutes with $g$ under composite product but they do not have a common fixed point in $[0,1]$. We consider the free discrete commutative semigroup $SC_2$ on two generators $s_1$ and $s_2$. Then $SC_2$ is amenable. Define  on $[0,1]$ by $T_{s_1}(x)=f(x)$ and $T_{s_2}(x)=g(x)$ ($x\in [0,1]$). They generate a continuous convex representation of $SC_2$ on $[0,1]$.  This representation does not have a common fixed point in $[0,1]$.
\end{remark}

\section{fixed point properties for L-embedded sets}\label{L-embedded}

A Banach space $E$ is L-embedded if the image of $E$ under the canonical embedding into $E^{**}$, the bidual space of $E$, is an $\ell_1$ summand in $E^{**}$. The Banach space $E$ is M-embedded if $E$ is an M-ideal in its bidual $E^{**}$, that is, if $E^\bot =\{\phi\in E^{***}: \phi(x) =0 \text{ for all } x\in E\}$ is an $\ell_1$-summand in $E^{***}$ (\cite{HWW}). In fact, $E$ is M-embedded if and only if $E^{***}=E^* \oplus_1 E^\bot$ (\cite[Remark~1.13]{HWW}).  The class of L-embedded Banach spaces includes all $L^1(\Sigma,\mu)$ (the space of all absolutely integrable functions on a measure space $(\Sigma,\mu)$), preduals of von Neumann algebras and the Hardy space $H_1$. A typical example of an M-embedded Banach space is $c_0(D)$ for a discrete space $D$ (see \cite[page 2953]{L-Z2}). 
 It is also well-known that $K(H)$, the Banach space of all compact operators on a Hilbert space $H$, is M-embedded. More classical M-embedded Banach spaces may be seen in \cite[Example~1.4]{HWW}. The dual space of an M-embedded Banach space is L-embedded. 
 
The authors introduced the notion of L-embedded subsets of  Banach spaces in \cite{L-Z2}.

\begin{defn}
Let $C$ be a nonempty subset of a Banach space $E$. Denote by $\overline{C}^{\text{wk*}}$ the closure of $C$ in $E^{**}$ in the weak* topology of $E^{**}$. We call $C$ \emph{L-embedded} if there is a subspace $E_s$ of $E^{**}$ such that $E+E_s = E\oplus_1 E_s$ in $E^{**}$ and $\overline{C}^{\text{wk*}}\subset C\oplus_1 E_s$, that is, for each $u\in \overline{C}^{\text{wk*}}$ there are $c\in C$ and $\xi\in E_s$ such that $u = c+ \xi$ and $\|u\|=\|c\| + \|\xi\|$.
\end{defn}
 Trivially, every L-embedded Banach space is L-embedded as a subset of itself. From the definition, every L-embedded set $C$ in a Banach space $E$ is necessarily  weakly closed.

Every weakly compact subset $C$ of a Banach space is L-embedded since $\overline C^{\text{wk*}} = C$ in the case. A Banach space $E$ is L-embedded if and only its unit ball is L-embedded subset of $E$. But a closed convex subset of an L-embedded Banach space may not be L-embedded. For example, the set of all means on $\ell^\infty$ is a convex closed (and even weak* compact) subset of $(\ell^\infty)^*$, but it is not L-embedded  although $(\ell^\infty)^*$ (as the dual space of a von Neumann algebra) is L-embedded. For a $\sigma$-finite measure space $(\Sigma, \mu)$, it was shown in \cite[Theorem~1.1]{B-L} that a closed convex bounded set in $L^1(\Sigma, \mu)$ is L-embedded if and only if it is closed with respect to the local measure topology. A generalization of this result in the operator algebra setting was given in \cite[Theorem~3.5]{D-D-S-T}.

The following result was shown in \cite{L-Z2}.

\begin{thm}\label{discrete FL}
Let $S$ be a left reversible discrete semigroup. Then $S$ has the following fixed point property.
\begin{description}
\item[($F_L$)] Whenever $\Sc = \{T_s: s\in S\}$ is a representation of $S$ as norm nonexpansive self mappings on a nonempty L-embedded convex subset $C$ of a Banach space $E$ with each $T_s$ being weakly continuous on every weakly compact $S$-invariant convex subset of $C$, if $C$ contains a nonempty bounded subset $B$ such that $T_s(B) = B$ for all $s\in S$, then $C$ has a common fixed point for $S$.
\end{description}
\end{thm}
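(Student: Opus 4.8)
The plan is to combine a fixed point / Zorn's lemma argument on weakly compact invariant sets with the structural decomposition that L-embeddedness provides, plus left reversibility of $S$ to get from individual almost-fixed points to a genuine common fixed point.

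\medskip

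\textbf{Step 1: Produce a minimal weakly compact invariant convex set.} Starting from the bounded set $B$ with $T_s(B)=B$ for all $s$, I would pass to $K_0=\overline{co}^{\,\text{wk*}}(B)$ computed in $E^{**}$, or rather work inside $E$ with the weak closure of $co(B)$. The point is that since each $T_s$ maps $B$ onto $B$, the orbit of $B$ stays bounded, so $\overline{co}(B)$ is a bounded closed convex set; its weak* closure $\widetilde K$ in $E^{**}$ is weak* compact. Using the L-embedded hypothesis, $\widetilde K\subset C\oplus_1 E_s$, so every point of $\widetilde K$ splits uniquely as $c+\xi$ with $\|c+\xi\|=\|c\|+\|\xi\|$. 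Then, among all nonempty weakly compact convex $S$-invariant subsets of $C$ that contain $B$ (or are contained in a fixed such set), Zorn's lemma, using that the $T_s$ are weakly continuous on weakly compact invariant sets, yields a \emph{minimal} one, call it $K$. (One must first exhibit at least one such set; this is where the L-embedded structure is used to manufacture a weakly compact piece out of the merely bounded $B$, by taking the ``$E$-component'' of $\widetilde K$ under the $\ell_1$-projection and checking it is weakly compact and $S$-invariant.)

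\medskip

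\textbf{Step 2: Show the minimal set is a singleton, or exploit asymptotic centers.} On the minimal weakly compact convex invariant set $K$, I would argue that $K$ must be a single point. The standard route: if $\text{diam}(K)>0$, use a Chebyshev / asymptotic-center argument adapted to the L-embedded norm decomposition — the $\ell_1$-decomposition $u=c+\xi$ makes the norm behave like an $L^1$-norm transversally, which is exactly the situation where minimal invariant sets collapse. Concretely, one takes an element $u$ in the weak* closure of an orbit, writes $u=c+\xi$, and shows that $\|\xi\|$ is forced to be $0$ by minimality (otherwise the set $\{c': c'+\xi\in\widetilde K,\ \|\xi\|\ \text{maximal}\}$ or a suitable asymptotic center would be a strictly smaller invariant set), and then that the remaining $c$-part is a single point because passing to it does not increase distances and minimality kicks in again. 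This mirrors the known proof that L-embedded Banach spaces have the fixed point property for nonexpansive maps (Maurey-type arguments), now carried out $S$-equivariantly.

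\medskip

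\textbf{Step 3: Upgrade an individual fixed point to a common fixed point via left reversibility.} Once one knows each $T_s$ (indeed each element of the weak*-closed enveloping semigroup) has a nonempty fixed point set inside $C$ — or once $K=\{x_0\}$ directly gives $T_sx_0=x_0$ for all $s$ because $K$ is $S$-invariant and a singleton — the conclusion is immediate in the singleton case. If instead the minimality argument only yields fixed points per element, I would use left reversibility: the sets $\text{Fix}(T_s)\cap K$ (or the closures of tail orbits $\overline{\{T_t x: t\in aS\}}$) form a downward-directed family of nonempty weakly compact sets indexed by $S$, and $\overline{aS}\cap\overline{bS}\neq\emptyset$ translates into the finite intersection property, so their intersection is a nonempty common fixed point set. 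The cleaner packaging, though, is that Step 2 directly gives $K$ a singleton and $S$-invariance finishes it.

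\medskip

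\textbf{Main obstacle.} The delicate point is Step 2: carrying out the Maurey/asymptotic-center argument for L-embedded \emph{sets} (not whole spaces) in an $S$-equivariant way, making sure the candidate smaller invariant set one constructs is still (a) nonempty, (b) weakly compact, (c) convex, and (d) $S$-invariant, using only weak continuity of $T_s$ on weakly compact invariant subsets and the $\ell_1$-splitting $\overline{C}^{\text{wk*}}\subset C\oplus_1 E_s$. Ensuring the ``$E_s$-component'' can be stripped off inside $C$ (rather than landing outside $C$) is exactly what the definition of L-embedded subset is designed to guarantee, and the left-reversibility hypothesis is what lets the one-parameter asymptotic-center construction be performed consistently across all of $S$ rather than one element at a time. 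I expect the write-up to reduce to: (i) a lemma that the $\ell_1$-projection of $\widetilde K$ onto $C$ is a weakly compact $S$-invariant convex set, (ii) a lemma that a minimal such set has zero diameter, and (iii) the trivial observation that a one-point $S$-invariant set is a common fixed point.
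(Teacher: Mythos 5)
Your high-level architecture is the right one --- use the $\ell_1$-splitting $\overline{C}^{\text{wk*}}\subset C\oplus_1 E_s$ to manufacture a nonempty weakly compact convex $S$-invariant subset of $C$ out of the merely bounded set $B$, and then finish on that subset --- but the mechanism you propose for the crucial extraction step does not work. You suggest taking $\widetilde K=\overline{co}^{\,\text{wk*}}(B)$ and then ``taking the $E$-component of $\widetilde K$ under the $\ell_1$-projection and checking it is weakly compact.'' That set is not weakly compact in general: it contains $co(B)$, so if it were always weakly compact then $\overline{co}(B)$ would be weakly compact for every bounded $B$ in an L-embedded space, forcing reflexivity; concretely, for $B=\{e_n\}$ in $\ell^1$ the $E$-component of $\overline{co}^{\,\text{wk*}}(B)$ contains all $e_n$ and has no weak cluster point among them (Schur property). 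The projection onto the $E$-summand is not weak*-to-weak continuous, so compactness is simply lost. The argument that actually works (this is the Bader--Gelander--Monod circumcenter idea, adapted in \cite{L-Z2} to L-embedded \emph{sets}) is a minimization, not a projection: set $r(u)=\sup_{b\in B}\|u-b\|$ for $u\in\overline{C}^{\text{wk*}}$ and $r_0=\inf_{x\in C}r(x)$. Writing $u=c+\xi$ with $c\in C$, $\xi\in E_s$, the $\ell_1$-additivity gives $\|u-b\|=\|c-b\|+\|\xi\|$ for every $b\in B\subset E$, hence $r(u)=r(c)+\|\xi\|\geq r_0+\|\xi\|$. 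Weak* compactness and weak* lower semicontinuity of $r$ produce a minimizer, which is therefore forced to have $\xi=0$; the full minimizing set $A_0=\{x\in C: r(x)=r_0\}$ coincides with its weak* closure and is thus nonempty, convex, weakly compact, and $S$-invariant (invariance is exactly where $T_s(B)=B$ together with nonexpansiveness enters: $r(T_sx)\leq r(x)$ because $T_s$ permutes $B$ onto itself).

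The second gap is in your Steps 2--3. Proving that a \emph{minimal} weakly compact convex invariant set is a singleton is not ``the standard route'' here --- for nonexpansive actions this is precisely the content of the hard fixed point theorem for left reversible semigroups on weakly compact convex sets (property (G) of Hsu and Lau--Takahashi), and your sketch via ``Maurey-type'' or asymptotic-center collapse is asserted rather than carried out; note also that your fallback in Step 3 is circular, since the sets $\mathrm{Fix}(T_s)\cap K$ are not known to be nonempty, and left reversibility does not by itself give the finite intersection property for fixed point sets of distinct elements. The intended proof does not reprove any of this: once $A_0$ is in hand, the hypothesis that each $T_s$ is weakly continuous on every weakly compact $S$-invariant convex subset of $C$ is tailored exactly so that one can invoke the known theorem that a discrete left reversible semigroup acting on a nonempty weakly compact convex set by weakly continuous norm nonexpansive maps has a common fixed point. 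So the correct skeleton is: (i) the circumcenter lemma above, (ii) citation of property (G) on $A_0$. Your writeup is missing (i) in a fixable but essential way, and replaces (ii) with an unproved collapse claim.
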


Recall that a semitopological semigroup $S$ is \emph{strongly left reversible} if there is a family of countable subsemigroups $\{ S_\al :\; \al \in I\}$ such that
\begin{enumerate}
\item[(1)] $S = \cup_{\al\in I}S_\al$,
\item[(2)] $\overline{aS_\al} \cap \overline{bS_\al} \ne \emptyset$ for each $\al\in I$ and $a, b\in S_\al$,
\item[(3)] for each pair $\al_1, \al_2 \in I$, there is $\al_3\in I$ such that $S_{\al_1}\cup S_{\al_2} \subset S_{\al_3}$.
\end{enumerate}
A metrizable left reversible semitopological semigroup is always strongly left reversible \cite[Lemma~5.2]{L-Z1}. Using this fact and the same proof as for Theorem~\ref{discrete FL} one can derive the following theorem.

\begin{thm}\label{metrizable FL}
Let $S$ be a metrizable left reversible semitopological semigroup. Let $\Sc = \{T_s: s\in S\}$ be a norm nonexpansive representation of $S$ on a nonempty L-embedded  convex subset $C$ of a Banach space $E$ such that $C$ contains a nonempty bounded subset $B$ with $T_s(B) = B$ for all $s\in S$. If for every weakly compact $S$-invariant convex subset $M$ of $C$  the mapping $(s, x)\mapsto T_s(x)$ is jointly continuous from $S\times M$ into $M$ when $M$ is endowed with the weak topology of $E$, then $C$ has a common fixed point for $S$.
\end{thm}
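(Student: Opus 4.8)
The plan is to reduce the assertion to the case of a countable semigroup, which is governed by the argument already used for Theorem~\ref{discrete FL}, and then to reassemble the countable pieces by exploiting the permanence of L-embeddedness under intersections. Fix a subspace $E_s\subseteq E^{**}$ witnessing the L-embeddedness of $C$, so that $E+E_s=E\oplus_1 E_s$ and $\overline{C}^{\text{wk*}}\subseteq C\oplus_1 E_s$. First, since $S$ is metrizable and left reversible it is strongly left reversible by \cite[Lemma~5.2]{L-Z1}, so $S=\bigcup_{\al\in I}S_\al$ for a family of countable subsemigroups with $\overline{aS_\al}\cap\overline{bS_\al}\neq\emptyset$ for all $a,b\in S_\al$ and with any two $S_{\al_1},S_{\al_2}$ contained in a common $S_{\al_3}$. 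Each $S_\al$ is again metrizable and internally left reversible; being countable, its closed principal right ideals $\{\overline{tS_\al}:t\in S_\al\}$ form a downward-directed family admitting a cofinal decreasing \emph{sequence}. It is this sequential device, rather than a general net, that lets the proof of Theorem~\ref{discrete FL} run for $S_\al$, the joint-continuity hypothesis assumed here taking over the role played there by weak continuity of the individual $T_s$ together with left reversibility.

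Next, fix a countable $T:=S_\al$ and carry out the L-embedded fixed point argument inside $C$. Since $B$ is bounded, $\overline{B}^{\text{wk*}}$ is weak*-compact in $E^{**}$ and, by L-embeddedness, lies in $C\oplus_1 E_s$. A Zorn argument then produces a set $D\subseteq C$ minimal among the nonempty, bounded, convex, L-embedded, $T$-invariant subsets of $C$; the point that makes Zorn applicable is that such sets are stable under intersections of chains, because for a decreasing chain $\{D'_i\}$ the weak*-closures $\overline{D'_i}^{\text{wk*}}$ are decreasing weak*-compact sets, hence have a common point $u$, and writing $u=c+\xi$ in the $\ell_1$-decomposition $E\oplus_1 E_s$ and using its uniqueness yields the same $c\in\bigcap_i D'_i$ for every $i$ (and an intersection of L-embedded sets is again L-embedded, by the same uniqueness). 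One then argues, exactly as in \cite{L-Z2}, that following the orbit of a point of $D$ along the cofinal decreasing sequence of right ideals and splitting the resulting weak* cluster points through $C\oplus_1 E_s$ forces the singular component to vanish and $D$ to collapse to a single point; joint continuity of $(s,x)\mapsto T_sx$ on the weakly compact $T$-invariant convex subsets of $C$ — which are restrictions of the weakly compact $S$-invariant situation of the hypothesis — is what keeps these cluster points under control. The unique point of $D$ is fixed by all of $T$; and the same analysis shows the stronger statement that $\mathrm{Fix}(S_\al)\cap D_0$ is a nonempty bounded convex L-embedded subset of $C$ for every nonempty bounded convex L-embedded $S_\al$-invariant $D_0\subseteq C$.

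Finally, glue the pieces. Fix one nonempty bounded convex L-embedded $S$-invariant set $D_0\subseteq C$, which can be manufactured from $B$. Then $\{\mathrm{Fix}(S_\al)\cap D_0:\al\in I\}$ is a downward-directed family of nonempty bounded convex L-embedded sets: if $S_{\al_1}\cup S_{\al_2}\subseteq S_{\al_3}$ then $\mathrm{Fix}(S_{\al_3})\cap D_0\subseteq(\mathrm{Fix}(S_{\al_1})\cap D_0)\cap(\mathrm{Fix}(S_{\al_2})\cap D_0)$. By the same weak*-compactness and $\ell_1$-uniqueness argument as above, $\bigcap_{\al\in I}\bigl(\mathrm{Fix}(S_\al)\cap D_0\bigr)\neq\emptyset$, and any point of this intersection is a common fixed point of $S=\bigcup_\al S_\al$ in $C$.

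I expect the genuine obstacle to be the heart of the middle step: extracting an honest fixed point in $C$ from the purely geometric splitting $\overline{B}^{\text{wk*}}\subseteq C\oplus_1 E_s$, given that the maps $T_s$ need not admit continuous extensions to $E^{**}$ — together with the auxiliary fact that the fixed-point sets arising here are again L-embedded, which is exactly what makes the soft gluing step work. The reduction to countable subsemigroups and the gluing are otherwise routine, resting on strong left reversibility of $S$ and on permanence of L-embeddedness under intersection.
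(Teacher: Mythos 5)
Your outer reduction is exactly the paper's: metrizable plus left reversible gives strongly left reversible by \cite[Lemma~5.2]{L-Z1}, and one then runs the argument of Theorem~\ref{discrete FL}. But the way you fill in that argument has a genuine gap, concentrated in precisely the two places you flag at the end. First, your gluing step needs each set $\mathrm{Fix}(S_\al)\cap D_0$ to be L-embedded, and this is asserted, not proved. There is no apparent way to prove it at this level of generality: if $u$ is a weak$^*$ cluster point in $E^{**}$ of a net of common fixed points and $u=c+\xi$ is its $\ell_1$-decomposition, nothing forces $c$ to be a fixed point, because the maps $T_s$ are nonlinear, are only defined and nonexpansive on $C$, and admit no weak$^*$-continuous extension to $E^{**}$ through which one could pass to the limit. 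Second, a minimal bounded convex invariant set for a nonexpansive action is not a singleton in general --- that failure is the entire subject of metric fixed point theory --- so ``splitting the weak$^*$ cluster points through $C\oplus_1 E_s$ forces $D$ to collapse to a single point'' is not the mechanism, and the blanket appeal to \cite{L-Z2} does not supply one.

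The intended proof uses L-embeddedness exactly once: from the bounded set $B$ with $T_s(B)=B$ one extracts, via the $\ell_1$-splitting of the weak$^*$ closure of $B$, a nonempty \emph{weakly compact} convex $S$-invariant subset $M$ of $C$. From that point on the L-structure plays no role. The hypothesis of Theorem~\ref{metrizable FL} makes the action on $M$ weakly jointly continuous and norm nonexpansive, so the fixed point property (G$^*$) for metrizable left reversible semitopological semigroups (stated in Section~\ref{nonexpansive} and proved in \cite{L-Z1}, which is where strong left reversibility is actually consumed) yields the common fixed point. This is also why the continuity hypotheses of Theorems~\ref{discrete FL} and \ref{metrizable FL} are phrased on weakly compact $S$-invariant convex subsets of $C$: they are tailored to feed $M$ into the weakly compact fixed point theorems (G) and (G$^*$) respectively. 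If one prefers to glue countable pieces by hand, the fixed point sets of the $S_\al$ are weakly closed subsets of the weakly compact $M$ and form a downward-directed family, so their intersection is nonempty by compactness alone, with no need for your L-embedded gluing. Reorganizing your argument around this single weakly compact $M$ closes the gap and makes the unproved claim about L-embeddedness of fixed-point sets unnecessary.
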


 Again, the same method combined with \cite[Theorem~3]{Lim} leads to the following theorem.

\begin{thm}\label{normal structure}
Let $S$ be a left reversible semitopological semigroup and let $\Sc =\{T_s: s\in S\}$ be a representation of $S$ as norm nonexpansive and separately continuous self mappings on a nonempty L-embedded convex subset $C$ of a Banach space $E$. Suppose that $C$ has normal structure and contains a nonempty bounded subset $B$ such that $T_s(B) = B$ for all $s\in S$. Then $C$ has a common fixed point for $S$.
\end{thm}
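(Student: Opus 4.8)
The plan is to mimic the proof of Theorem~\ref{discrete FL}: first use the $L$-embeddedness of $C$ together with the bounded $\Sc$-invariant set $B$ to produce a nonempty \emph{weakly compact} convex $\Sc$-invariant subset $Z$ of $C$, and then apply a common fixed point theorem to $Z$. Since we are not given any weak continuity of the maps $T_s$, the final ingredient is not a Hsu-type theorem but Lim's fixed point theorem \cite[Theorem~3]{Lim}: a left reversible semitopological semigroup acting by separately continuous nonexpansive self-mappings on a weakly compact convex set with normal structure has a common fixed point. Once $Z$ is in hand the theorem follows at once, because a convex subset $Z$ of a set with normal structure again has normal structure (as $co\,H\subseteq Z$ for every $H\subseteq Z$), the representation $\Sc$ restricts to a separately continuous nonexpansive action on $Z$, and $S$ is left reversible.

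To construct $Z$ I would pass to $E^{**}$ and run a Chebyshev-center argument relative to $B$. Fix $b_0\in B$ and put, for $u\in\overline{C}^{\,\text{wk*}}$,
\[
\widetilde R(u)=\sup\{\,\|u-b\|:\ b\in B\,\}.
\]
Since $B$ is bounded, $\widetilde R(b_0)<\infty$; the function $\widetilde R$ is convex and weak* lower semicontinuous, so the sublevel set $F=\{u\in\overline{C}^{\,\text{wk*}}:\widetilde R(u)\le\widetilde R(b_0)+1\}$ is bounded and weak* closed, hence weak* compact. Consequently $\rho^{*}:=\inf\{\widetilde R(u):u\in\overline{C}^{\,\text{wk*}}\}$ is attained, and
\[
Z:=\{\,u\in\overline{C}^{\,\text{wk*}}:\ \widetilde R(u)=\rho^{*}\,\}
\]
is a nonempty weak* compact convex subset of $E^{**}$.

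The decisive step --- the one that uses $L$-embeddedness --- is to show $Z\subseteq C$. Write $\overline{C}^{\,\text{wk*}}\subseteq C\oplus_1 E_s$ with $E+E_s=E\oplus_1 E_s$ as in the definition of an $L$-embedded set, and, for $u\in Z$, decompose $u=c+\xi$ with $c\in C$ and $\xi\in E_s$. For each $b\in B\subseteq E$ the element $c-b$ lies in $E$, so the $\ell_1$-splitting gives $\|u-b\|=\|c-b\|+\|\xi\|$; taking the supremum over $b\in B$ yields $\widetilde R(u)=\widetilde R(c)+\|\xi\|$. But $c\in C\subseteq\overline{C}^{\,\text{wk*}}$, so $\widetilde R(c)\ge\rho^{*}$, forcing $\|\xi\|=0$ and $u=c\in C$. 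Thus $Z\subseteq C$; being weak* compact and contained in $E$, $Z$ is weakly compact, so $Z$ is a nonempty weakly compact convex subset of $C$. Finally, $Z$ is $\Sc$-invariant: for $z\in Z$ and $s\in S$ the surjectivity of $T_s$ on $B$ gives $\sup_{b\in B}\|T_sz-b\|=\sup_{b'\in B}\|T_sz-T_sb'\|\le\sup_{b'\in B}\|z-b'\|=\rho^{*}$, while $T_sz\in C\subseteq\overline{C}^{\,\text{wk*}}$ forces $\widetilde R(T_sz)\ge\rho^{*}$; hence $\widetilde R(T_sz)=\rho^{*}$, i.e. $T_sz\in Z$. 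Applying \cite[Theorem~3]{Lim} to the restricted action $\{T_s|_Z:s\in S\}$ produces a common fixed point of $S$ lying in $Z\subseteq C$.

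I expect the principal obstacle to be matching Lim's theorem precisely: one must verify that the semigroup of self-mappings of $Z$ induced by $\Sc$ is left reversible in the sense that theorem requires, and that the separate-continuity hypothesis of the present statement is exactly what is needed once the action is restricted to the weakly compact convex set $Z$. A more technical point of care is the Chebyshev-center construction itself --- the infimum defining $\rho^{*}$ must be taken over all of $\overline{C}^{\,\text{wk*}}$ rather than over the weak* closed convex hull of $B$, so that the $C$-component $c$ of each $u\in Z$ is automatically a competitor in that infimum, which is precisely what makes the inequality $\rho^{*}=\widetilde R(u)=\widetilde R(c)+\|\xi\|\ge\rho^{*}+\|\xi\|$ go through. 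As in the proof of Theorem~\ref{discrete FL}, it is worth noting that neither left reversibility nor normal structure is used in building $Z$; both enter only at the final appeal to Lim's theorem.
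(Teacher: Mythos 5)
Your proposal is correct and follows exactly the route the paper indicates: it reduces to a nonempty weakly compact convex $\Sc$-invariant subset $Z\subseteq C$ via the Chebyshev-center argument over $\overline{C}^{\,\text{wk*}}$, with the $\ell_1$-splitting from L-embeddedness pulling the minimizers back into $C$ (this is the ``same method'' as in Theorem~\ref{discrete FL}), and then invokes Lim's theorem \cite[Theorem~3]{Lim} for left reversible semigroups of separately continuous nonexpansive maps on a weakly compact convex set with normal structure. No essential differences from the paper's intended argument.
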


For affine nonexpansive mappings on a closed convex L-embedded set of a Banach space the following was obtained in \cite{L-Z2}.

\begin{thm}\label{WAP has LIM}
Let $C$ be a nonempty closed convex set in a Banach space and $S$ be a semitopological semigroup such that $WAP(S)$ has a LIM. Suppose that $\Sc=\{T_s: s\in S\}$ is a representation of $S$ as separately continuous nonexpansive affine mappings on $C$. If $C$ is L- embedded and there is a bounded set $B\subset C$ such that $T_s(B) = B$ for all $s\in S$, then $C$ has a common fixed point for $S$.
\end{thm}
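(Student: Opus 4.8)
The plan is to reduce the affine, non-expansive problem on the possibly unbounded L-embedded set $C$ to a fixed point problem on a weakly compact convex set, where Theorem~\ref{F} (or rather the implication ``$WAP(S)$ has LIM $\Rightarrow$ (F)'', in its affine-map incarnation) can be invoked. First I would use the bounded invariant set $B$: passing to $\overline{co}\,B$ if necessary, we may assume $B$ is a nonempty bounded closed convex $S$-invariant subset of $C$ (invariance of $\overline{co}\,B$ follows from affineness and continuity of each $T_s$, and $T_s(\overline{co}\,B)=\overline{co}\,B$ from $T_s(B)=B$). The key structural input is L-embeddedness: write $E+E_s=E\oplus_1 E_s$ in $E^{**}$ with $\overline{B}^{\mathrm{wk}*}\subset B\oplus_1 E_s$. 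The bidual maps $T_s^{**}$ are weak*-weak* continuous affine non-expansive self-maps of $\overline{B}^{\mathrm{wk}*}$, and the point of the $\ell_1$-decomposition is that the ``$E$-part'' projection $P\colon E\oplus_1 E_s\to E$ is a norm-one projection that does not increase norms; one shows $P\circ T_s^{**}=T_s\circ P$ on $\overline{B}^{\mathrm{wk}*}$ (this is where affineness and the uniqueness of the decomposition are used).

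Next I would run a Banach-limit / asymptotic-center argument. Fix $b_0\in B$ and consider, for the LIM $m$ on $WAP(S)$, the functional obtained by integrating $s\mapsto T_s b_0$ against $m$ inside $\overline{B}^{\mathrm{wk}*}$; call it $u\in\overline{B}^{\mathrm{wk}*}$. Using that the action is separately continuous and non-expansive, the orbit maps $s\mapsto \langle \phi, T_s b_0\rangle$ lie in $WAP(S)$ for $\phi$ in a norming set (this uses weak compactness coming into play once we intersect with a weakly compact invariant set), so $u$ is well defined and $T_s^{**}u$ is, after applying $m$-invariance, again built from a translated orbit; left invariance of $m$ gives $T_s^{**}u=u$ for all $s$ — i.e. $u$ is a common fixed point of the bidual action in $\overline{B}^{\mathrm{wk}*}$. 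Then project: $Pu\in B\subset C$ satisfies $T_s(Pu)=P(T_s^{**}u)=Pu$, so $Pu$ is the desired common fixed point of $\Sc$ in $C$.

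Alternatively — and this is probably the cleaner route — one shows directly that $\overline{B}^{\mathrm{wk}*}\cap C$, or a suitable ``fixed-point-seeking'' subset of it, is a nonempty weakly compact convex $S$-invariant set on which the action is separately continuous, non-expansive and affine, and then quotes the affine fixed point machinery ($WAP(S)$ has LIM forces a common fixed point for separately continuous non-expansive affine actions on weakly compact convex sets; cf. the argument underlying Theorem~\ref{F} and Theorem~\ref{Thm Sr convex}). The reduction to the weakly compact case is exactly what L-embeddedness buys: one locates a minimal weakly compact convex invariant subset of $\overline{B}^{\mathrm{wk}*}$, its $E$-projection lands in $C$, and the $\ell_1$-geometry guarantees the projection does not create new orbits or enlarge displacements.

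The main obstacle I expect is the interface between weak* compactness in $E^{**}$ and the hypothesis that ``$WAP(S)$ has a LIM'': the latter yields fixed points for actions on \emph{weakly} compact convex sets, but the natural compact object here, $\overline{B}^{\mathrm{wk}*}$, is only weak*-compact in the bidual, and the bidual maps $T_s^{**}$ need not be weak*-continuous if $T_s$ is merely separately continuous (continuity of $T_s$ is in the norm/weak topology of $E$, not automatically extending to $E^{**}$). Handling this requires either (a) restricting to a genuinely weakly compact invariant subset of $C$ — which is why the argument must pass through minimal invariant sets and the asymptotic-center technique rather than a naive application of a fixed point theorem — or (b) verifying that the relevant orbit functions $s\mapsto\langle\phi,T_sx\rangle$ are weakly almost periodic so that the LIM can be applied to them coordinatewise; reconciling these two viewpoints, and checking that the $\ell_1$-projection $P$ intertwines the actions on the nose, is the technical heart of the proof.
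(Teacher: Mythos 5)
Your proposal correctly locates the difficulty but does not resolve it: both of your routes are circular at exactly the point where L-embeddedness has to do its work. The claim that the orbit functions $s\mapsto\langle\phi,T_sb_0\rangle$ lie in $WAP(S)$ is false under the stated hypotheses, because $B$ is only assumed bounded, not relatively weakly compact. Concretely, take $E=\ell^1(\mathbb{Z})$, $C$ its unit ball (an L-embedded set), $S=(\mathbb{Z},+)$ acting by the bilateral shift, $B=\{e_n:n\in\mathbb{Z}\}$ and $b_0=e_0$: then $f_\phi(n)=\phi(n)$ is an arbitrary element of $\ell^\infty(\mathbb{Z})$, so it need not be weakly almost periodic, even though all hypotheses of the theorem hold. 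Your parenthetical ``this uses weak compactness coming into play once we intersect with a weakly compact invariant set'' presupposes the object whose construction is the entire content of the theorem; likewise, in your ``cleaner route'' the set $\overline{co}^{\,\mathrm{wk}*}(B)\cap E=\overline{co}\,B$ is not weakly compact in general, and a minimal weak*-compact invariant subset of $\overline{B}^{\,\mathrm{wk}*}$ lives in $E^{**}$, not in $C$. What is missing is the actual mechanism, which in the source proof runs through the singular part of the $\ell_1$-decomposition: for $u\in\overline{C}^{\,\mathrm{wk}*}\subseteq C\oplus_1 E_s$ one has $\|u_s\|=\mathrm{dist}(u,C)$ with $u_C$ the \emph{unique} nearest point of $C$; one minimizes $u\mapsto\|u_s\|$ over $\overline{co}^{\,\mathrm{wk}*}(B)$ (nonemptiness of the minimizing set requires a weak* lower semicontinuity argument special to this decomposition), shows the minimizing set is convex, weak*-compact and invariant under the extended maps because nonexpansiveness forces $\|(\widetilde T_su)_s\|\le\|u_s\|$, and then shows that the set of regular parts $\{u_C\}$ of the minimizers is a nonempty convex $S$-invariant subset of $C$ which is \emph{weakly} compact (weak* convergence together with convergence of the singular norms forces weak convergence of the regular parts, a Kadec--Klee-type consequence of $\ell_1$-additivity). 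Only on that set do the coefficient functions become weakly almost periodic, so that the LIM on $WAP(S)$ can be applied.

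Two further points. First, your identity $P\circ T_s^{**}=T_s\circ P$ on all of $\overline{B}^{\,\mathrm{wk}*}$ is unjustified: nonexpansiveness gives only $\|\widetilde T_su-T_su_C\|\le\|u_s\|$, and since $\|\widetilde T_su-T_su_C\|=\|(\widetilde T_su)_C-T_su_C\|+\|(\widetilde T_su)_s\|$, the $C$-parts can disagree whenever the singular norm strictly decreases; the intertwining holds precisely on the minimizing set, which is why the minimization cannot be skipped. (Also, the definition of L-embedded subset gives $\overline{C}^{\,\mathrm{wk}*}\subseteq C\oplus_1E_s$, not $\overline{B}^{\,\mathrm{wk}*}\subseteq B\oplus_1E_s$.) Second, your final descent step is essentially sound and can be justified without any global intertwining: if $\widetilde T_su=u$ then $\|u-T_su_C\|\le\|u-u_C\|=\mathrm{dist}(u,C)$, and uniqueness of the nearest point forces $T_su_C=u_C$. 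But this does not repair the earlier gap, since producing such a $u$ (or a weakly compact invariant subset of $C$) is exactly what remains unproved in your argument.
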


If $S$ is a locally compact group then $WAP(S)$ always has a LIM and, for each $x\in C$, $B= Sx$ always satisfies $T_s(B) = B$ for all $s\in S$. So, if there is $x\in C$ such that $Sx$ is bounded, then every separately continuous nonexpansive affine representation of $S$ on a nonempty L-embedded convex set of a Banach space has a common fixed point for $S$. Thus,
Theorem~\ref{WAP has LIM} extends \cite[Theorem~A]{BGM} of  Bader, Gelander and  Monod, which was used to give a short proof to resolve the derivation problem. This long-standing  problem was first solved by V. Losert in \cite{Losert}.

For a discrete semigroup $S$ it is well-known that the left reversibility of $S$ implies that $WAP(S)$ has a left invariant mean and the converse is not true due to Theorem~\ref{bicyclic}. For general semitopological semigroups $S$,  the relation between the left reversibility of $S$ and the existence of a left invariant mean for $WAP(S)$ is still unknown. 

We recall that if $S$ acts on a Hausdorff space $X$, the action is quasi-equicontinuous if $\overline{S}^p$, the closure of $S$ in the product space $X^X$, consists entirely of continuous mappings. Let $C$ be a closed convex subset of a Banach space $E$ and let $S$ act on $C$ as self mappings. We say that the action is hereditary weakly quasi-equicontinuous in $C$ if for each weakly compact $S$-invariant convex subset $K$ of $C$ the $S$-action on $(K, \text{wk})$ is quasi-equicontinuous, where $(K, \text{wk})$ is $K$ with the weak topology of $E$. We note that, if the $S$-action on $C$ is weakly equicontinuous, then it is hereditary quasi-equicontinuous in $C$. By \cite[Lemma~3.1]{L-Z1}, if $C$ is weakly compact and the $S$-action on $C$ is weakly quasi-equicontinuous, then the $S$-action is hereditary weakly quasi-equicontinuous in $C$. The following was shown in \cite{L-Z2}.

\begin{thm}\label{FL}
Let $S$ be a separable semitopological semigroup. Suppose that $WAP(S)$ has a left invariant mean. Then $S$ has the following fixed point property.
\begin{description}
\item[($F_L'$)] If $S$ acts on a nonempty convex L-embedded subset $C$ of a Banach space as norm nonexpansive and hereditary weakly quasi-equicontinuous mappings for which the mapping $s \mapsto T_s(x)$ ($s\in S$) is weakly continuous whenever $x$ belongs to any  weakly compact $S$-invariant convex subset of $C$ and if $C$ contains a nonempty bounded subset $B$ such that $T_s(B) = B$ ($s\in S$), then there is a common fixed point for $S$ in $C$.
\end{description}
\end{thm}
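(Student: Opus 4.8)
The plan is to reduce property $(F_L')$ to the fixed point property (F) of Theorem~\ref{F} and then to exploit the L-embeddedness of $C$, together with the recurrence hypothesis $T_s(B)=B$, to manufacture a weakly compact convex set on which (F) can be applied. Since $S$ is separable and $WAP(S)$ has a LIM, Theorem~\ref{F} gives that $S$ has property (F); hence it suffices to produce a \textbf{nonempty weakly compact convex $S$-invariant subset $K\subseteq C$}. Once such $K$ is at hand, the hypotheses of $(F_L')$ match those of (F) when $K$ carries the weak topology of $E$: norm nonexpansiveness is $Q$-nonexpansiveness for $Q=\{\|\cdot\|\}$; hereditary weak quasi-equicontinuity says the closure $\overline{S}^{\,p}$ of $\Sc$ in $(K,\text{wk})^{(K,\text{wk})}$ consists of continuous maps, so the action is weakly quasi-equicontinuous and in particular each $T_s$ is weakly continuous on $K$; and the assumed weak continuity of $s\mapsto T_s(x)$ for $x$ in weakly compact $S$-invariant convex sets supplies separate continuity in the remaining variable. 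Thus (F) applies on $(K,\text{wk})$ and produces a common fixed point of $S$ in $K\subseteq C$.

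So the whole difficulty lies in constructing $K$, and this is precisely where L-embeddedness is essential: Alspach's nonexpansive bijection of the bounded closed convex set $\{f\in L^1[0,1]:0\le f\le 2,\ \int f=1\}$, which is \emph{not} L-embedded, shows that $T_s(B)=B$ alone is far from enough. I would follow the scheme of the proof of Theorem~\ref{discrete FL}, with property (F) taking over the r\^ole played there by left reversibility. Since $T_s(B)\subseteq B$ and $B$ is bounded, pass to the bidual and set $W=\overline{co}^{\,\text{wk*}}(B)\subseteq E^{**}$, a weak$^*$-compact convex set. Because $C$ is L-embedded and $B\subseteq C$, one has $W\subseteq\overline{C}^{\text{wk*}}\subseteq C\oplus_1 E_s$, so the L-projection $\pi\colon C\oplus_1 E_s\to C$ along $E_s$ is defined on all of $W$ with $\pi(W)\subseteq C$, and the $\ell_1$-splitting $\|u\|=\|\pi(u)\|+\|u-\pi(u)\|$ holds on $W$. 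I would then minimise the ``singular mass'' $u\mapsto\|u-\pi(u)\|$ over an $\Sc$-compatible family of weak$^*$-compact convex subsets of $W$ and, via a Zorn's-lemma argument, extract a minimal such subset $W_0$ on which this functional is constant; nonexpansiveness of the $T_s$ together with $T_s(B)=B$ should force this constant to represent a single $S$-invariant singular element, and splitting it off yields a weakly compact convex $S$-invariant $K\subseteq C$ (weak compactness because, by minimality of the singular mass, the weak$^*$-closure in $E^{**}$ of the relevant set falls back into $E$).

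The step I expect to be the main obstacle is the clash between the \emph{nonlinear} maps $T_s$ and the \emph{linear} L-projection $\pi$. The L-projection is not weak$^*$-to-weak$^*$ continuous, so the $\Sc$-action cannot simply be transported to $W$ or to $\pi(W)$; and since the $T_s$ are not affine, $\overline{co}$ of an $\Sc$-invariant set need not be $\Sc$-invariant. Making the Zorn/minimality step interact correctly with these two failures — so that the singular part really is common to all $s\in S$ and the residual weakly compact convex set really is $\Sc$-invariant — is the delicate heart of the argument; here one must lean on the isometric $\ell_1$-splitting of the norm peculiar to L-embedded sets, on the weak continuity of each $T_s$ on weakly compact $S$-invariant convex subsets of $C$, and on hereditary weak quasi-equicontinuity in order to keep passing to smaller invariant weakly compact pieces. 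The separability of $S$ enters only at the very end, to license the appeal to property (F) via Theorem~\ref{F}.
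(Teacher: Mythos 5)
Your opening reduction is exactly right and is how the argument in \cite{L-Z2} is organized: since $S$ is separable and $WAP(S)$ has a LIM, Theorem~\ref{F} gives property (F), and the hypotheses of ($F_L'$) (hereditary weak quasi-equicontinuity, weak continuity of $s\mapsto T_s(x)$ on weakly compact invariant convex sets, norm nonexpansiveness) are precisely what is needed to apply (F) on any nonempty weakly compact convex $S$-invariant subset $K$ of $C$. The problem is your construction of $K$. You propose to pass to $W=\overline{co}^{\,\text{wk*}}(B)\subseteq E^{**}$ and run a Zorn/minimality argument on the ``singular mass'' $u\mapsto\|u-\pi(u)\|$ over $\Sc$-compatible weak*-compact convex subsets of $W$. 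This is the Bader--Gelander--Monod scheme, and it genuinely requires the action to live on $W$: for affine isometries one extends the action to $E^{**}$ weak*-continuously, but here the $T_s$ are nonlinear, are defined only on $C\subseteq E$, and do not extend to $E^{**}$ in any sense. So there is no $\Sc$-action on $W$, no meaning to ``$\Sc$-invariant subsets of $W$,'' and the minimization has nothing to be invariant under. You flag exactly this clash as ``the delicate heart of the argument'' and then leave it unresolved; that unresolved step is the entire theorem.

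The construction actually used works inside $E$, never moving the action to the bidual. Set $r(x)=\sup_{b\in B}\|x-b\|$ for $x\in C$ and $\rho=\inf_{x\in C}r(x)$, and let $K=\{x\in C: r(x)\le\rho\}$ (the Chebyshev center of $B$ relative to $C$). Invariance: for $x\in C$ and $b\in B$ write $b=T_sb'$ with $b'\in B$ (this uses the inclusion $B\subseteq T_s(B)$, i.e.\ the full equality $T_s(B)=B$, not the inclusion $T_s(B)\subseteq B$ you quote), so $\|T_sx-b\|=\|T_sx-T_sb'\|\le\|x-b'\|\le r(x)$; hence $r(T_sx)\le r(x)$ and $K$ is $S$-invariant. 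Nonemptiness and weak compactness are where L-embeddedness enters, and only here: extend $r$ to $\widetilde r(u)=\sup_{b\in B}\|u-b\|$ on $E^{**}$, which is weak* lower semicontinuous; any $u\in\overline{C}^{\,\text{wk*}}$ with $\widetilde r(u)\le\rho$ splits as $u=c+\xi$ with $c\in C$, $\xi\in E_s$, and since $c-b\in E$ one gets $\widetilde r(u)=r(c)+\|\xi\|\ge\rho+\|\xi\|$, forcing $\xi=0$ and $u=c\in K$. Applied to a weak* cluster point of a minimizing net this shows $K\ne\emptyset$; applied to $\overline{K}^{\,\text{wk*}}$ it shows $K$ is weak*-closed and bounded in $E^{**}$, hence weakly compact in $E$. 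Then (F) finishes. So your difficulty with the non-weak*-continuity of $\pi$ and the non-affineness of $T_s$ is not something to be overcome within your scheme; it is the signal that the scheme must be replaced by the Chebyshev-radius argument.
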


The above theorem was used in \cite{L-Z2} to provide the following example.

\begin{example}
Let $G$ be a non-amenable group and let $C$ be the set of all means on $\ell^\infty(G)$. 
Then $C$ is not L-embedded although it is a weak* compact convex subset of $\ell^\infty(G)^*$ 
 which, as the dual space of the von Neumman algebra $\ell^\infty(G)$, is indeed an L-embedded Banach space. As a consequence,  the set of all means on $\ell^\infty$ is not L-embedded.
\end{example}

For a semitopological semigroup $S$, simply examining the representation of $S$ on the weak* compact convex subset of all means on $LUC(S)$ defined by the dual of left translations on $LUC(S)$, one sees that if the following fixed point property holds then $LUC(S)$ has a left invariant mean.

\begin{description}
\item[($F_*$)]\label{F*} Whenever $\Sc = \{T_s:\; s\in S\}$ is a representation of $S$ as norm non-expansive mappings on a non-empty weak* compact convex set $C$ of the dual space of a Banach space $E$ and the mapping $(s,x)\mapsto T_s(x)$ from $S\times C$ to $C$ is jointly continuous, where $C$ is equipped with the weak* topology of $E^*$, then there is a common fixed point for $S$ in $C$.
\end{description}

 Whether the converse is true is an open problem raised in 
\cite{Lau.survey}.
For a discrete semigroup acting on a subset of the dual of an M-embedded Banach space, the following was given in \cite{L-Z2}.

\begin{thm}\label{F*M}
Let $S$ be a discrete left reversible semigroup. Then $S$ has the following fixed point property.
\begin{description}
\item[($F_{*M}$)]\label{FM} Whenever $\Sc = \{T_s:\; s\in S\}$ is a representation of $S$ as weak* continuous, norm nonexpansive mappings on a nonempty weak* compact convex set $C$ of the dual space $E^*$ of an M-embedded Banach space $E$, $C$ contains a common fixed point for $S$.
\end{description}
\end{thm}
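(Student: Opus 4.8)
The plan is to deduce the fixed point property $(F_{*M})$ from property $(F_L)$ of Theorem~\ref{discrete FL}, which already applies to discrete left reversible semigroups. What needs to be checked is that, in the situation of $(F_{*M})$, the set $C$ is an L-embedded subset of the Banach space $E^*$; that the assumed weak* continuity of the maps $T_s$ supplies the weak continuity required by $(F_L)$; and that $C$ contains a nonempty bounded set $B$ with $T_s(B)=B$ for every $s\in S$.

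First I would verify the L-embeddedness. Since $E$ is M-embedded, $E^{***}=E^*\oplus_1 E^\bot$ with $E^\bot=\{\psi\in E^{***}:\psi|_E=0\}$, and the corresponding $\ell_1$-projection $P\colon E^{***}\to E^*$ is just restriction to $E$; because $P\psi(x)=\langle\psi,\widehat x\rangle$ for $x\in E$, the map $P$ is continuous from $(E^{***},\sigma(E^{***},E^{**}))$ onto $(E^*,\sigma(E^*,E))$ and restricts to the identity on $E^*$. Hence, for $u$ in the weak* closure of $C$ inside $E^{***}$, a net in $C$ converging to $u$ is carried by $P$ to a net in $C$ converging weak* to $Pu$; weak* compactness of $C$ forces $Pu\in C$, so $u=Pu+(u-Pu)$ is the required decomposition with $u-Pu\in E^\bot$. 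Thus $C$ is L-embedded with $E_s=E^\bot$, and, being weak* compact, $C$ is norm bounded. For the continuity point I would use that on any weakly compact $M\subseteq E^*$ the weak and weak* topologies coincide --- the identity $(M,\mathrm{wk})\to(M,\mathrm{wk}^*)$ is a continuous bijection from a compact space onto a Hausdorff space --- so that on a weakly compact, $S$-invariant, convex $M\subseteq C$ the restriction of the globally weak* continuous map $T_s$ is weak*-to-weak* continuous into $M$, hence weakly continuous on $M$, exactly as $(F_L)$ demands.

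The substantive step is producing $B$. I would order the family of nonempty weak* compact subsets $L$ of $C$ with $T_s(L)\subseteq L$ for all $s$ by reverse inclusion; chains have upper bounds by compactness, so there is a minimal such set $B$. The claim is that $B_1:=\bigcap_{s\in S}T_s(B)$ again belongs to this family. Its nonemptiness uses left reversibility in the form $s_1S\cap\cdots\cap s_nS\neq\emptyset$ (a routine consequence of $\overline{aS}\cap\overline{bS}\neq\emptyset$ in the discrete case): a point $w$ of this intersection, written $w=s_iu_i$, gives $T_wb\in\bigcap_i T_{s_i}(B)$ for every $b\in B$, so the family $\{T_s(B):s\in S\}$ has the finite intersection property. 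For $S$-invariance, given $t\in S$ one has $T_t(B_1)\subseteq\bigcap_{v\in tS}T_v(B)$, and for each $u\in S$, picking $w\in uS\cap tS$ yields $\bigcap_{v\in tS}T_v(B)\subseteq T_w(B)\subseteq T_u(B)$, whence $T_t(B_1)\subseteq\bigcap_{u\in S}T_u(B)=B_1$. Minimality of $B$ then gives $B_1=B$, which forces $T_s(B)=B$ for all $s$. Since $B\subseteq C$ is norm bounded, Theorem~\ref{discrete FL} now applies and produces a common fixed point for $S$ in $C$, establishing $(F_{*M})$.

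The main obstacle is expected to be this last step --- manufacturing a single bounded set on which the whole family $\{T_s\}$ acts surjectively --- since that is where left reversibility must be invoked independently of its role inside Theorem~\ref{discrete FL}; the verification that $C$ is L-embedded is the other place where the structural hypothesis on $E$ genuinely enters, but it is comparatively routine.
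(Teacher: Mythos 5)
Your argument is correct and is essentially the intended derivation: the survey itself gives no proof of Theorem~\ref{F*M} (it is quoted from \cite{L-Z2}), and the route taken there is exactly yours --- show that a weak* compact convex subset of the dual of an M-embedded space is L-embedded via the canonical L-projection $E^{***}=E^*\oplus_1 E^\bot$, note that weak and weak* topologies agree on weakly compact subsets so weak* continuity yields the weak continuity required by ($F_L$), and use Zorn plus left reversibility to extract a minimal weak* compact invariant set $B$ with $T_s(B)=B$, after which Theorem~\ref{discrete FL} applies. All three verifications are carried out correctly, so there is nothing to add.
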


For non-discrete $S$ we have:

\begin{thm}\label{F*Mj}
Let $S$ be a left reversible semitopological semigroup. Then $S$ has the following fixed point property.
\begin{description}
\item[($F_{*Mj}$)]\label{FMj} If $\Sc = \{T_s:\; s\in S\}$ is a norm nonexpansive representation of $S$ on a nonempty weak* compact convex set $C$ of the dual space $E^*$ of an M-embedded Banach space $E$ and the mapping $(s,x) \mapsto T_s(x)$ from $S\times C$ into $C$ is jointly continuous when $C$ is endowed with the weak* topology of $E^*$, then $C$ contains a common fixed point for $S$.
\end{description}
\end{thm}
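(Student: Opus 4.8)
The plan is to obtain $(F_{*Mj})$ from the proof of Theorem~\ref{discrete FL} (equivalently, from that of Theorem~\ref{F*M}). Two preliminary facts must be established: that $C$ is an L-embedded subset of the Banach space $E^*$, and that $C$ contains a bounded set $B$ with $T_s(B)=B$ for every $s\in S$. Granting these, the argument of Theorem~\ref{discrete FL} goes through, with joint continuity of the action replacing the role played by discreteness in that proof.

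\emph{Step 1 ($C$ is L-embedded in $E^*$).} Being weak* compact, $C$ is norm bounded and weak* closed in $E^*$. Since $E$ is M-embedded, $E^{***}=E^*\oplus_1 E^\bot$ by \cite[Remark~1.13]{HWW}, so the singular subspace in the definition of an L-embedded set may be taken to be $E_s=E^\bot$; it remains to see that $\overline{C}^{\text{wk*}}\subset C\oplus_1 E^\bot$, where $\overline{C}^{\text{wk*}}$ is the closure of $C$ in $E^{***}$ for $\sigma(E^{***},E^{**})$. Consider the restriction map $R\colon E^{***}\to E^*$, $R(\Phi)=\Phi|_E$, with $E$ canonically embedded in $E^{**}$; one checks that $R$ is precisely the projection of $E^{***}$ onto $E^*$ associated with the decomposition $E^{***}=E^*\oplus_1 E^\bot$, and that $R$ is continuous from $(E^{***},\sigma(E^{***},E^{**}))$ to $(E^*,\sigma(E^*,E))$ since $\langle R(\Phi),x\rangle=\langle\Phi,\hat x\rangle$ and $\hat x\in E^{**}$ for $x\in E$. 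As $R$ restricts to the identity on $E^*\supset C$ and $C$ is weak* closed in $E^*$, we get $R(\overline{C}^{\text{wk*}})\subset C$; hence each $\Phi\in\overline{C}^{\text{wk*}}$ splits as $\Phi=R(\Phi)+(\Phi-R(\Phi))$ with $R(\Phi)\in C$, $\Phi-R(\Phi)\in E^\bot$, and $\|\Phi\|=\|R(\Phi)\|+\|\Phi-R(\Phi)\|$. Thus $C$ is L-embedded (the same reasoning shows that every weak* compact convex subset of the dual of an M-embedded space is L-embedded).

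\emph{Step 2 (a bounded set $B\subset C$ with $T_s(B)=B$).} Adjoin an identity $e$ to $S$ — this preserves left reversibility and extends the action by $T_e=\mathrm{id}_C$ — and set $K_a:=T_a(C)$ for $a\in S$. By joint continuity each $x\mapsto T_ax$ is weak* continuous, so $K_a$ is weak* compact, and the representation law gives $T_r(K_c)=K_{rc}$. If $c\in\overline{aS}$, writing $c=\lim_\gamma au_\gamma$ ($u_\gamma\in S$) and using joint continuity yields $T_cx=\lim_\gamma T_a(T_{u_\gamma}x)\in K_a$ for all $x\in C$, so $K_c\subset K_a$; combined with left reversibility this makes $\{K_a\}_{a\in S}$ a decreasing net of nonempty weak* compacta, so $B:=\bigcap_{a\in S}K_a$ is a nonempty bounded weak* compact subset of $C$. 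Fix $r\in S$. Since $B\subset K_{rc}=T_r(K_c)$ for every $c$, for any $p\in B$ one may choose $z_c\in K_c$ with $T_r(z_c)=p$; a weak* cluster point $z^*$ of the net $(z_c)$ lies in $B$ (by cofinality, $z^*\in K_{c_0}$ for each $c_0$) and satisfies $T_r(z^*)=p$ by weak* continuity of $T_r$, so $B\subset T_r(B)$. Conversely $T_r(B)\subset\bigcap_{a}T_r(K_a)=\bigcap_{b\in rS}K_b$, and this last set equals $B$: given $c_0\in S$, left reversibility furnishes $w\in\overline{rS}\cap\overline{c_0S}$; representing $w$ as a limit of a net in $rS$ and (separately) as a limit of a net in $c_0S$ and invoking joint continuity twice, any $p\in\bigcap_{b\in rS}K_b$ equals $T_w(q)$ for some $q\in C$ and therefore lies in $K_{c_0}$. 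Hence $T_r(B)=B$ for every $r\in S$.

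\emph{Step 3 (conclusion and the hard part).} By Step 1, $C$ is an L-embedded convex set; each $T_s$ is weak* continuous on $C$, hence weakly continuous on every weakly compact $S$-invariant convex subset of $C$ (on such a set the weak and weak* topologies coincide); by Step 2 there is a bounded $B\subset C$ with $T_s(B)=B$; and $S$ is left reversible. One now runs the proof of Theorem~\ref{discrete FL}: using the directed orbit structure coming from left reversibility and the splitting $\overline{C}^{\text{wk*}}=C\oplus_1 E^\bot$, one produces a weak* limit point $u=c+\xi$ of an appropriate orbit net ($c\in C$, $\xi\in E^\bot$); the inequality $\|c-y\|+\|\xi\|\le\liminf\|x_\gamma-y\|$ for $y\in C$ — which follows from weak* lower semicontinuity of the norm together with the $\ell_1$-decomposition — combined with nonexpansiveness and $T_s(B)=B$ forces $\xi=0$, so $u=c\in C$ is a common fixed point for $S$. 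The only change from the discrete case is that the limits $T_{s_\gamma}(y)\to T_s(y)$ along convergent nets $s_\gamma\to s$ in $S$, free when $S$ is discrete, are here provided by joint continuity — already used essentially in Step 2, which is why joint rather than merely separate continuity is assumed. I expect the real difficulty to be Step 3: carrying out the minimality/asymptotic-center argument so that the singular part $\xi$ is annihilated, while handling the left/right asymmetry intrinsic to left reversibility — only the inclusion $B\subset T_s(B)$ is obtained cheaply, whereas the equality and the vanishing of $\xi$ require genuine work.
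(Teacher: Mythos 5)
The paper itself gives no proof of this theorem (it is a survey, deferring to \cite{L-Z2}), but your Steps 1 and 2 are correct and are certainly the intended reduction: the restriction map $E^{***}\to E^*$ is indeed the $\ell_1$-projection coming from $E^{***}=E^*\oplus_1 E^\bot$, it is $\sigma(E^{***},E^{**})$-to-weak* continuous, and weak* closedness of $C$ then gives $\overline{C}^{\text{wk*}}\subset C\oplus_1 E^\bot$; likewise your construction of $B=\bigcap_{a\in S}T_a(C)$ and the verification $T_r(B)=B$ (including the correct use of joint continuity to show $\bigcap_{b\in rS}K_b\subset B$) is sound and is the standard device for left reversible actions on weak* compacta.

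The genuine gap is Step 3. You assert that the proof of Theorem~\ref{discrete FL} ``goes through, with joint continuity of the action replacing the role played by discreteness,'' but that claim is exactly what needs proof and is almost certainly false as stated. Observe that every one of the paper's L-embedded fixed point theorems carries an extra hypothesis beyond left reversibility: discreteness (Theorem~\ref{discrete FL}), metrizability — used via strong left reversibility, i.e.\ a decomposition of $S$ into a directed family of countable subsemigroups (Theorem~\ref{metrizable FL}), normal structure (Theorem~\ref{normal structure}), or separability together with a LIM on $WAP(S)$ (Theorem~\ref{FL}). If your Step 3 worked, it would prove the jointly continuous analogue of Theorem~\ref{discrete FL} for an \emph{arbitrary} left reversible semitopological semigroup acting on an L-embedded set with a bounded invariant $B$ — a statement strictly stronger than Theorem~\ref{metrizable FL}, whose metrizability hypothesis would then be superfluous. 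The discreteness (or countability) in those proofs is not used merely to make limits $T_{s_\gamma}(y)\to T_s(y)$ available; it enters the asymptotic-center/Chebyshev-radius argument itself (for instance, the reduction $F(S)=\bigcap_\al F(S_\al)$ over countable subsemigroups, and the control of norm loss along weak* convergent \emph{sequences} versus nets). Your sketch of the annihilation of $\xi$ shows only that the circumcenter of $B$ relative to $\overline{C}^{\text{wk*}}$ can be taken in $C$; without normal structure or a sequential Kadec--Klee-type property this yields a diametral minimal set, not a fixed point. So the analytic core of the theorem — why weak* compactness in the dual of an M-embedded space, together with joint continuity, substitutes for the missing countability/normal-structure hypotheses — is left unproved.
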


\begin{remark} 
Since the Banach space $K(H)$ of all compact operators on a Hilbert space $H$ is M-embedded, it follows that both Theorems \ref{F*M} and \ref{F*Mj} apply to $K(H)^* = J(H)$, the space of trace operators on $H$.
\end{remark}

\section{Fixed point properties for F-algebras}

In \cite{K3} (see also \cite{K1,K2,Lau_invar, L-P-W,Lau-Wong}), Ky Fan established the following remarkable ``Invariant Subspace Theorem" for left amenable semigroups:
\begin{thm}
Let $S$ be a left amenable semigroup, and let $\Sc = \{T_s: s\in S\}$ be a representation of $S$ as continuous linear operators on a separated locally convex space $E$. Then the following property holds:
\begin{description}
\item[(KF)]\label{K} If $X$ is a subset of $E$ (containing an $n$-dimensional subspace) such that $T_s(L)$ is an $n$-dimensional subspace contained in $X$ whenever $L$ is one and $s\in S$, and there exists a closed $\Sc$-invariant subspace $H$ in $E$ of codimension $n$ with the property that $(x+H)\cap X$ is compact convex for each $x\in E$, then there exists an $n$-dimensional subspace $L_0$ contained in $X$ such that $T_s(L_0) = L_0$ for all $s\in S$.
\end{description}
\end{thm}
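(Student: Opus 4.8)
The plan is to reduce the statement to an ordinary fixed point theorem for an affine action of $S$ on a compact convex set, and then invoke amenability of $S$ (equivalently, the existence of a LIM on $\ell^\infty(S)$, which by the fixed point property (D)/(E$'$) for discrete $S$ — or more directly by the classical Day fixed point theorem for affine actions of amenable semigroups — gives a common fixed point for any separately continuous affine action on a compact convex set). First I would introduce the quotient space $F = E/H$, which is an $n$-dimensional vector space since $H$ has codimension $n$; let $\pi\colon E \to F$ be the quotient map. Because $H$ is $\Sc$-invariant and each $T_s$ is linear and continuous, each $T_s$ descends to a linear map $\overline{T}_s\colon F \to F$, and $s \mapsto \overline{T}_s$ is a representation of $S$ by linear endomorphisms of the finite-dimensional space $F$.

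Next I would encode the $n$-dimensional subspaces of $E$ contained in $X$ as points of a suitable compact convex set. The key observation is the hypothesis that $(x+H)\cap X$ is compact and convex for every $x \in E$: this says that $X$ meets each coset of $H$ in a compact convex ``slice,'' so $X$ is, roughly, the ``graph'' over $F$ of a compact-convex-valued assignment. An $n$-dimensional subspace $L \subseteq X$ projects isomorphically onto $F = E/H$ (since $L \cap H$ must be $\{0\}$: if $L\cap H \ne \{0\}$ then $\dim \pi(L) < n$, but $T_s(L)\subseteq X$ being $n$-dimensional forces control that I would need to check rules this out — more carefully, one shows $L$ is a graph of a linear section $F \to E$). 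Thus such an $L$ is the graph of a linear map $\sigma\colon F \to E$ with $\pi\circ\sigma = \mathrm{id}_F$, i.e. a linear right inverse to $\pi$, subject to the constraint $\sigma(F) \subseteq X$. The set $\Lambda$ of all such admissible sections, viewed inside the (finite-dimensional) affine space of all linear right inverses of $\pi$, is convex: if $\sigma_0,\sigma_1 \in \Lambda$ then for $t\in[0,1]$ and each $v\in F$ we have $((1-t)\sigma_0 + t\sigma_1)(v) \in ((\text{the coset } \pi^{-1}(v))\cap X)$ by convexity of that slice; and $\Lambda$ is compact because each coordinate evaluation $\sigma \mapsto \sigma(v)$ lands in the compact set $(\pi^{-1}(v))\cap X$ and the topology on the finite-dimensional space of sections is the one of pointwise convergence on a basis of $F$. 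Finally, $S$ acts on $\Lambda$: given $\sigma\in\Lambda$, the subspace $T_s(\mathrm{graph}\,\sigma)$ is again an $n$-dimensional subspace in $X$, hence the graph of a unique section $s\cdot\sigma \in \Lambda$; one checks this action is affine (it is a projective-linear action made affine precisely because we have pinned down the section normalization via $\pi$) and separately continuous in the product topology.

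Having set this up, I would apply the common fixed point theorem for affine separately continuous actions of the amenable semigroup $S$ on the compact convex set $\Lambda$ (this is exactly the discrete, affine instance covered by the circle of results around Theorem (D) and Day's theorem, available since $S$ is left amenable). A fixed point $\sigma_0 \in \Lambda$ yields an $n$-dimensional subspace $L_0 = \mathrm{graph}\,\sigma_0 \subseteq X$ with $T_s(L_0) = L_0$ for all $s\in S$, which is the conclusion.

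The main obstacle I anticipate is the step showing every $n$-dimensional $L\subseteq X$ with $T_s(L)\subseteq X$ is genuinely a graph over $F$ — i.e. that $L \cap H = \{0\}$ — and, relatedly, that the induced $S$-action on $\Lambda$ is well-defined and affine rather than merely projective. The graph property is where the codimension hypothesis and the compactness of the slices must interact: a priori $\pi|_L$ could be non-injective, and one must rule out that $X$ contains ``vertical'' directions inside $H$ in a way compatible with an $n$-dimensional $\Sc$-invariant configuration; I expect this forces a preliminary normalization (perhaps passing to the subspace of $X$ over a generic coset, or using that $X$ contains an $n$-dimensional subspace to begin with) before the compact-convex-slice structure can be exploited. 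Verifying separate continuity of the $S$-action on $\Lambda$ is then routine given continuity of each $T_s$ and finite-dimensionality of $F$.
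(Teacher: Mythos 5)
Your reduction is correct and is essentially the classical route: the paper itself does not reprove (KF) but quotes it from Ky Fan's papers, one of which is literally titled ``Invariant cross-sections and invariant linear subspaces,'' and the cross-section encoding you describe, followed by Day's fixed point theorem for affine actions of left amenable semigroups on compact convex sets (the discrete case of Theorem 2.10(2) in the survey, since $WLUC(S)=\ell^\infty(S)$ for discrete $S$), is exactly that argument.

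The one step you leave open --- that every $n$-dimensional $L\subseteq X$ is a graph over $F=E/H$ --- is not a genuine obstacle and needs no ``preliminary normalization'': if $0\neq v\in L\cap H$, then the whole line $\{tv\}$ lies in $L\cap H\subseteq X\cap(0+H)$, and the latter is compact by hypothesis, hence bounded in the separated locally convex space $E$, so it contains no line; therefore $L\cap H=\{0\}$ and, by dimension count, $\pi|_L\colon L\to F$ is a bijection. The same observation supplies a point you did not flag but do need: since $T_s(L)$ is again an $n$-dimensional subspace of $X$, it too is a graph, so $\overline{T}_s(F)=\pi(T_s L)=F$ and $\overline{T}_s$ is invertible; this is what lets you write $s\cdot\sigma=T_s\circ\sigma\circ\overline{T}_s^{-1}$, from which affineness of the action in $\sigma$ is immediate by linearity of $T_s$, and the semigroup law and continuity for each fixed $s$ follow by direct computation. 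One small correction of wording: the affine space of linear right inverses of $\pi$ is modeled on $\mathrm{Hom}(F,H)$ and is generally infinite-dimensional; what your compactness argument actually uses is that $\sigma\mapsto(\sigma(v_1),\dots,\sigma(v_n))$ embeds $\Lambda$ as a closed convex subset of $\prod_{i=1}^n\bigl(\pi^{-1}(v_i)\cap X\bigr)$, a compact convex subset of $E^n$, which is all that Day's theorem requires.
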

The origin of Ky Fan's Theorem lies in the earlier investigations of Pontrjagin, Iovihdov, Krein and Naimark concerning invariant subspaces for Lorentz transformations on a Hilbert space \cite{Iohv,I-K, Krein, Naimark1, Naimark2, Pontr}. In physics, a Lorentz transformation is an invertible linear mapping on $\Rb^4$ that describes how a measurement of space and time observed in a frame of reference is converted into another frame of reference. From special relativity Lorentz transformations may be characterized as invertible linear mappings that preserve the quadratic form
\[ 
J(\vec x) = x^2 + y^2 + z^2 - c^2 t^2, \quad \vec x = (x, y, z, t)\in \Rb^4
\]
where the constant $c$ is the speed of light.
Quantity $J$ represents the space time interval. 
 It is a well known fact that for any Lorentz transformation $T$ there is a three dimensional subspace $V$ of $\Rb^4$ which is $T$-invariant and  positive (in the sense that $T(\vec x) = \vec x$ and $J(\vec x)\geq 0$ for all $\vec x \in V$). 
 
L. S. Pontrjagin \cite{Pontr}, I. S. Iovihdov \cite{Iohv}, M. G. Krein \cite{Krein, I-K} and M. A. Naimark \cite{Naimark1,Naimark2} investigated infinite-dimension versions of the above invariant subspace property, and Naimark finally established the following theorem in 1963 \cite{Naimark1}.
\begin{thm}
Let $n>0$ be an integer. Consider the quadratic form on $\ell^2$ given by
\[
J_n(x) = \sum_{i=1}^n|x_i|^2 - \sum_{i=n+1}^\infty |x_i|^2, \quad x=(x_1,x_2,\cdots)\in \ell^2.
\]
Suppose that $G$ is a commutative group of continuous,  invertible, $J_n$-preserving, linear transformations on $\ell^2$. Then there is a $G$-invariant $n$-dimensional subspace $V$ of $\ell^2$ which is positive (in the sense that $J_n(x) \geq 0$ for all $x\in V$).
\end{thm}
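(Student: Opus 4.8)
The plan is to deduce the theorem from Ky Fan's Theorem (the fixed point property (KF)), using that every commutative group is amenable, so that $G$ meets the hypothesis ``$S$ left amenable'' of that theorem. First I would fix the framework: take $E=\ell^2$ with its weak topology (a separated locally convex space), on which $G$ acts by the given linear operators --- these are weakly continuous since they are bounded. Let $P_\pm$ be the orthogonal projections onto the first $n$ and onto the remaining coordinates, $J=P_+-P_-$, so that $J_n(x)=\langle Jx,x\rangle$ and $J_n$-invariance of $T$ reads $T^*JT=J$. As the set $X$ in (KF) take the positive cone $X=\{x\in\ell^2:\ J_n(x)\ge 0\}$: it contains the $n$-dimensional positive definite subspace $\mathcal H_+=P_+\ell^2$, and if $L\subseteq X$ is $n$-dimensional then $L$ is a positive subspace and, since each $T_s$ is invertible and $J_n$-preserving, $T_s(L)$ is again an $n$-dimensional positive subspace, hence lies in $X$. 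With this in place, (KF) would deliver a $G$-invariant $n$-dimensional subspace $L_0\subseteq X$, i.e.\ a positive $n$-dimensional subspace fixed by $G$ --- exactly the assertion --- \emph{provided} we can supply the last hypothesis of (KF): a closed $G$-invariant subspace $H\subseteq\ell^2$ of codimension $n$ for which $(x+H)\cap X$ is weakly compact and convex for every $x$.

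Constructing such an $H$ is the heart of the matter and the step I expect to be the main obstacle. Two observations frame the difficulty. Since $h\mapsto J_n(x+h)$ is a quadratic on $H$ whose second-order part is $J_n|_H$, the slice $(x+H)\cap X$ is convex for every $x$ precisely when $J_n$ is negative semi-definite on $H$; and if $H$ contains any $J_n$-neutral vector $h_0\ne 0$, then along $x+th_0$ the value $J_n$ is affine in $t$, so the slice is unbounded for suitable $x$ --- hence for the slices to be weakly compact as well, $H$ must in fact be uniformly $J_n$-negative, i.e.\ $-J_n(h)\ge\varepsilon\|h\|^2$ on $H$ for some $\varepsilon>0$. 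The natural flat choice $H=\mathcal H_-=P_-\ell^2$ is uniformly negative and gives $(x+\mathcal H_-)\cap X = x_+ + \{y\in\mathcal H_-:\|y\|\le\|x_+\|\}$, a translate of a weakly compact ball, so it satisfies every demand \emph{except} $G$-invariance. On the other hand, a $G$-invariant uniformly negative closed subspace of codimension $n$ would have $J_n$-orthogonal complement a $G$-invariant positive \emph{definite} $n$-dimensional subspace, which need not exist --- e.g.\ for a hyperbolic one-parameter subgroup of $SU(1,1)$ on $\Pi_1$ the invariant maximal positive subspaces are all neutral lines. Thus there is a genuine tension between $G$-invariance of $H$ and compactness of the slices, and resolving it is where the commutativity of $G$ and the finiteness of the number $n$ of positive squares must really be used; this is essentially the point at which Ky Fan's argument replaces the naive construction of $H$ by one carried out with an invariant mean on $G$, so that the averaging is performed at the level of ``configurations'' rather than on the (non-affine) action on subspaces.

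If instead one wants a more self-contained argument, I would run the classical model: identify each maximal positive subspace with the graph of a contraction $K\in\mathcal K:=\{K\in B(\mathcal H_+,\mathcal H_-):\|K\|\le 1\}$, which is a weakly compact convex subset of a reflexive Banach space (reflexive because $\mathcal H_+$ is finite dimensional); writing $T\in G$ in block form $\bigl(\begin{smallmatrix}A&B\\ C&D\end{smallmatrix}\bigr)$ relative to $\mathcal H_+\oplus\mathcal H_-$, note that $A+BK$ is invertible for $K\in\mathcal K$ (since $T$ sends maximal positive subspaces to maximal positive subspaces) and that $T$ acts on $\mathcal K$ by the weakly continuous M\"obius map $\phi_T(K)=(C+DK)(A+BK)^{-1}$. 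A single $\phi_T$ has a fixed point by the Schauder--Tychonoff theorem; the task is then to obtain a \emph{common} fixed point of the commuting family $\{\phi_T:T\in G\}$. Since the $\phi_T$ are not affine, Markov--Kakutani does not apply and one cannot simply average an orbit against an invariant mean; making the passage from a single operator to the whole amenable group --- which is precisely what (KF) encodes --- is the real work and the main obstacle of the proof.
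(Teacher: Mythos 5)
The paper does not prove this statement: it is quoted as Naimark's 1963 theorem, and the surrounding discussion explicitly points out that Ky Fan's property (KF) carries an \emph{extra} hypothesis --- the existence of a closed invariant subspace $H$ of codimension $n$ with compact convex slices $(x+H)\cap X$ --- of which ``there is no mention \dots in Naimark's result.'' That remark already dooms your first route as a direct deduction, and your own analysis correctly explains why: convexity of the slices forces $J_n\le 0$ on $H$, compactness forces uniform negativity, and a $G$-invariant uniformly negative $H$ of codimension $n$ would (by passing to the $J_n$-orthogonal complement, which is $G$-invariant because $G$ preserves $J_n$) already produce a $G$-invariant uniformly \emph{positive} $n$-dimensional subspace --- strictly more than the theorem asserts and, as your hyperbolic $SU(1,1)$ example shows, sometimes false. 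So (KF) cannot yield Naimark's theorem by the substitution you propose, and the paper's framing (Ky Fan trades commutativity and the Hilbert space for amenability \emph{plus} the conditions on $H$) is telling you exactly that.

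The genuine gap is that neither of your two routes is carried to a conclusion. In the first you end by hoping that ``Ky Fan's argument replaces the naive construction of $H$ by one carried out with an invariant mean,'' but no such construction is exhibited, and your own counterexample shows none exists at the level of subspaces $H$. In the second you set up the correct classical model --- maximal positive subspaces as graphs of contractions $K\in\mathcal{K}$, the group acting by fractional-linear maps $\phi_T(K)=(C+DK)(A+BK)^{-1}$, a fixed point for a single $\phi_T$ from Schauder--Tychonoff using weak compactness of $\mathcal{K}$ --- but you explicitly leave the passage to a \emph{common} fixed point of the commuting family as ``the real work.'' That passage is precisely the content of Naimark's theorem beyond Krein's single-operator case; it cannot be obtained from Markov--Kakutani (the maps are not affine, as you note) and needs an actual argument, for instance that the fixed-point set of each $\phi_T$ is a nonempty compact set of the same fractional-linear type, invariant under every other $\phi_{T'}$ by commutativity, followed by a finite-intersection or induction argument. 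As written, your proposal is an accurate map of the difficulties, but it does not prove the theorem.
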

 
To understand the conditions on the subset $X$ in Ky Fan's property (KF) we note that $X=\{x\in \ell^2: J_n(x)\geq0\}$ in the setting of the above theorem satisfies indeed the requirement in (KF). However, there is no mention of an invariant subspace $H$ in Naimark's result. Ky Fan's Theorem removes the commutative group and Hilbert space restrictions of Naimark's Theorem, replacing them by amenability and the conditions involving $H$.

Let $A$ be a Banach algebra and let $X$ be a Banach left, right or two-sided $A$-module. Then $X^*$ is respectively a Banach right, left or two-sided $A$-module with the corresponding module action(s) defined naturally by 
\[
\la x, f\cdot a\ra = \la a\cdot x, f\ra, \;\la x, a\cdot f\ra = \la x\cdot a, f\ra \quad (a\in A, f\in X^*, x\in X).
\]
Let $X$ be a Banach $A$-bimodule. A linear mapping $D$: $A\to X$ is called a \emph{derivation} if it satisfies
\[
 D(ab) = a\cdot D(b) + D(a)\cdot b \quad (a,b\in X).
 \]
 Derivations in the form $D(a) = a\cdot x_0 - x_0\cdot a$ ($a\in A$) for some fixed $x_0\in X$ are called inner derivations.

A Banach algebra $A$ is an \emph{F-algebra} \cite{Lau_F} (also known as Lau algebras \cite{Pier}) if it is the (unique) predual of a $W^*$-algebra $ \Mf$ and the identity $\e$ of $ \Mf$ is a multiplicative linear functional on $A$.  Since $A^{**} =  \Mf^*$, we denote by $P_1(A^{**})$, the set of all normalized positive linear functionals on $ \Mf$, that is 
\[
P_1(A^{**}) =\{m\in A^{**}: m\geq 0, m(\e) =1\}.
\]
In this case $P_1(A^{**})$ is a semigroup with the (two) Arens multiplications.

 Examples of F-algebras include the predual algebras of a Hopf von Neumann algebra (in particular, quantum group algebras), the group algebra $L^1(G)$ of a locally compact group $G$, the Fourier algebra $A(G)$ and the Fourier-Stieltjes algebra $B(G)$ of a topological group $G$ (see \cite{D-L-S, Lau_F, Lau-Ludwig}). They also include the measure algebra $M(S)$ of a locally compact semigroup $S$. Moreover, the hypergroup algebra $L^1(H)$ and the measure algebra $M(H)$ of a locally compact hypergroup $H$ with a left Haar measure are F-algebras. In this case, it was shown in \cite[Theorem~5.2.2]{Willson} (see also \cite[Remark~5.3]{Willson2}) that $(L^1(H))^*=L^\infty(H)$ is not a Hopf von Neumann algebra unless $H$ is a locally compact group.

Let $G$ be a locally compact group with a fixed left Haar measure $\lambda$. Then the group algebra $L^1(G)$ is the Banach space of $\lambda$-integrable functions with product
\[
f*g(x) = \int_G{f(y)g(y^{-1}x)}d\lambda(y) \quad (x\in G).
\]
If $S$ is a locally compact semigroup, then the measure algebra $M(S)$ is the space of regular Borel measures on $S$ with the total variation norm and the convolution product defined by 
\[
\la\mu*\nu, f\ra = \iint_{G\times G}{f(xy)}d\nu(y)d\mu(x) \quad (f\in C_0(S)),
\]
where $C_0(S)$ denotes the space of continuous functions vanishing at $\infty$.
If $S$ is discrete, then $M(S)=\ell^1(S)$.

In his seminal paper \cite{Eymard} P. Eymard (see also \cite{K-L}) associated to a locally compact group $G$ two important commutative Banach algebras. These are the Fourier algebra $A(G)$ and the Fourier-Stieltjes algebra $B(G)$. The latter is indeed the linear span of the set of all continuous positive definite complex-valued functions on $G$. This is also the space of the coefficient functions of the unitary representations of the group $G$. More precisely, given $u\in B(G)$ there exists a unitary representation $\pi$ of $G$ and two vectors $\xi$ and $\eta$ in the representation Hilbert space $H(\pi)$ of $\pi$ such that 
\[
u(x) = \la\pi(x)\xi, \eta\ra \quad (x\in G).
\]
Equipped with the norm $\|u\| = \inf_{\xi,\eta}\|\xi\|\,\|\eta\|$ and the pointwise multiplication $B(G)$ is a commutative Banach algebra, where the infimum is taking on all $\xi$ and $\eta$ satisfying the preceding equality. As a Banach algebra $B(G)$ is also the dual space of the group C*-algebra $C^*(G)$. The Fourier algebra $A(G)$ is the closed ideal of $B(G)$ generated by the elements of $B(G)$ with compact supports. The algebra $A(G)$ can also be defined as the set of coordinate functions of the left regular representations of $G$ on $L^2(G)$. When $G$ is abelian, via Fourier transform we have
\[
A(G) = L^1(\hat G), \quad B(G) = M(\hat G) \quad \text{and } C^*(G) = C_0(\hat G),
\]
where $\hat G$ is the dual group of $G$.

We call a semitopological semigroup $S$  \emph{extremely left amenable} if there is a left invariant mean $m$ on $LUC(S)$ which is multiplicative, that is it satisfies further
 \[
 m(fg) = m(f)m(g)\quad (f,g \in LUC(S)).
 \]

Let $A$ be an F-algebra. Elements of $P_1(A^{**})$ are called \emph{means} on $A^* =\Mf$. It is well-known that 
\[
P_1(A) = P_1(A^{**})\cap A
\]
  is a topological semigroup with the product and topology carried from $A$ (this may be regarded as a consequence of \cite[Proposition~1.5.2]{Sakai}), and $P_1(A)$ spans $A$ (see \cite[Theorem~1.14.3]{Sakai}). A mean $m \in P_1(A^{**})$ on $A^*$ is called a \emph{topological left invariant mean}, abbreviated as TLIM,  if $a\cdot m = m$ for all $a\in P_1(A)$, in other words $m \in P_1(A^{**})$ is a TLIM if $m(x\cdot a) = m(x)$ for all $a\in P_1(A)$ and $x\in A^*$. 

 An F-algebra $A$ is called \emph{left amenable} if, for each Banach $A$-bimodule $X$ with the left module action specified by $a\cdot x = \langle a, \e\rangle x$ ($a\in A$, $x\in X$), every continuous derivation from $A$ into $X^*$ is inner. The following was shown in \cite{Lau_F} (see Theorems~4.1 and 4.6 there).

\begin{prop}\label{left amen1-3}
Let $A$ be an F-algebra. Then the following are equivalent.
\begin{enumerate}
\item There is a TLIM for $A^*$. \label{TLIM}
\item  
The algebra $A$ is left amenable.\label{inner}
\item There exists a net $(m_\al)\subset P_1(A)$ such that $am_\al - m_\al \to 0$
in norm topology for each $a\in P_1(A)$.\label{appTLIM}
\end{enumerate}
\end{prop}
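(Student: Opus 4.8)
The plan is to prove the cycle of implications $(1)\Rightarrow(3)\Rightarrow(2)\Rightarrow(1)$, which is the most economical route and the one that best exploits the $F$-algebra structure. The key structural facts in play are that $P_1(A)$ is a norm-closed convex topological semigroup that linearly spans $A$ (cited from Sakai), that $A^{**}=\Mf^*$ carries the two Arens products with $P_1(A^{**})$ a semigroup under each, and that $P_1(A)$ is weak*-dense in $P_1(A^{**})$ (this last follows from Goldstine together with the fact that $P_1(A)=P_1(A^{**})\cap A$ is a weak*-dense face of the positive part of the unit ball, since $\e$ is a norm-one multiplicative functional).

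\emph{Step 1: $(1)\Rightarrow(3)$.} Suppose $m\in P_1(A^{**})$ is a TLIM, so $a\cdot m=m$ for all $a\in P_1(A)$. By weak*-density of $P_1(A)$ in $P_1(A^{**})$ choose a net $(n_\be)\subset P_1(A)$ with $n_\be\to m$ weak* in $A^{**}$. Then for each fixed $a\in P_1(A)$ we have $a\cdot n_\be - n_\be \to a\cdot m - m = 0$ in the weak topology $\sigma(A,A^*)$ of $A$ (using that left multiplication by $a$ on $A^{**}$ restricts to left multiplication on $A$ and is weak*-weak* continuous, hence weak-weak continuous on $A$). Thus $0$ lies in the weak closure of $\{a\cdot n_\be - n_\be\}$ for each $a$. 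A standard convexity argument — pass to finite subsets $F\subset P_1(A)$, form the product net, and apply Mazur's theorem to convert weak convergence to $0$ into norm convergence to $0$ of suitable convex combinations, which again lie in $P_1(A)$ since $P_1(A)$ is convex — yields a net $(m_\al)\subset P_1(A)$ with $\|a\cdot m_\al - m_\al\|\to 0$ for every $a\in P_1(A)$. This is the standard Day-type passage from a weak to a norm approximate invariance.

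\emph{Step 2: $(3)\Rightarrow(2)$.} Let $X$ be a Banach $A$-bimodule with left action $a\cdot x=\langle a,\e\rangle x$, and let $D\colon A\to X^*$ be a continuous derivation. Since $P_1(A)$ spans $A$, it suffices to understand $D$ on $P_1(A)$, and for $a,b\in P_1(A)$ the derivation identity together with the prescribed left action gives $D(ab)=\langle a,\e\rangle D(b)+D(a)\cdot b = D(b)+D(a)\cdot b$ (using $\langle a,\e\rangle=1$). With $(m_\al)\subset P_1(A)$ as in (3), consider the net $D(m_\al)\in X^*$; it is bounded (by $\|D\|$), so by weak*-compactness of balls in $X^*$ it has a weak* cluster point $x_0\in X^*$. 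For $a\in P_1(A)$ the identity above applied to $am_\al$ gives $D(am_\al)=D(m_\al)+D(a)\cdot m_\al$, while also $D(am_\al)=D(m_\al)+(D(am_\al)-D(m_\al))$ and $\|am_\al-m_\al\|\to0$ forces $D(am_\al)-D(m_\al)\to0$ in norm; comparing, $D(a)\cdot m_\al$ converges to $0$ weak* against the appropriate bidual pairing, and passing to the cluster point yields $D(a)=x_0\cdot a - a\cdot x_0$ (the term $a\cdot x_0$ being $\langle a,\e\rangle x_0 = x_0$ on one side, matched by the $D(m_\al)$ piece). Hence $D$ is inner on $P_1(A)$, and by linearity and density on all of $A$.

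\emph{Step 3: $(2)\Rightarrow(1)$.} Apply left amenability to a concretely chosen module. Take $X=A^*/\Cbb\e$ with the natural bimodule structure adjusted so the left action is $a\cdot \dot f=\langle a,\e\rangle\dot f$; then $X^*$ is identified with the annihilator $\{m\in A^{**}: m(\e)=0\}$, and the map $D(a)=a\cdot\hat f_0 - \langle a,\e\rangle \hat f_0$ for a suitable fixed element is a continuous derivation whose innerness produces the fixed point $a\cdot m = m$, which after normalizing (adding a fixed mean to land in $P_1(A^{**})$) is exactly a TLIM. The main obstacle across the argument is Step~1: one must handle the simultaneous approximate invariance over \emph{all} of $P_1(A)$ (not just finitely many elements) and ensure the convex combinations produced by Mazur's theorem stay inside $P_1(A)$ — this is where convexity and norm-closedness of $P_1(A)$, and the fact that $\langle\cdot,\e\rangle$ is a homomorphism keeping the normalization $m(\e)=1$, are all essential; the derivation bookkeeping in Steps~2 and~3 is routine once the correct module with the distinguished left action is fixed.
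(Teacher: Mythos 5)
The paper gives no proof of this proposition at all---it cites \cite{Lau_F} (Theorems~4.1 and 4.6 there)---so your attempt can only be measured against the standard argument from that reference, and your cycle $(1)\Rightarrow(3)\Rightarrow(2)\Rightarrow(1)$, with a Day--Mazur passage for $(1)\Rightarrow(3)$ and Johnson-type derivation arguments for the other two legs, is indeed that standard route. Step~1 is essentially right: the weak*-density of $P_1(A)$ in $P_1(A^{**})$ is the density of the normal states of $\Mf$ in its full state space (a standard fact, but it needs a small separation argument beyond Goldstine), and the Mazur step must be run in the product space $A^k$ over a finite set $\{a_1,\dots,a_k\}\subset P_1(A)$ so that one convex combination, which stays in $P_1(A)$ by convexity, works for all $a_i$ simultaneously. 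Those are quibbles. The genuine problems are in Steps~2 and~3.

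In Step~2 you have the module actions on the wrong sides. With the paper's conventions, the scalar left action $a\cdot x=\la a,\e\ra x$ on $X$ dualizes to the \emph{right} action $f\cdot a=\la a,\e\ra f$ on $X^*$, while the \emph{left} action of $A$ on $X^*$ is the adjoint of the (arbitrary) right action on $X$ and is not scalar. So the derivation identity reads $D(ab)=a\cdot D(b)+\la b,\e\ra D(a)$, not $D(ab)=\la a,\e\ra D(b)+D(a)\cdot b$ as you wrote; your consequent claims that $D(a)\cdot m_\al\to 0$ and that $a\cdot x_0=\la a,\e\ra x_0$ are false, and the inner-derivation formula does not follow from what you derived. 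The repair is short: $D(am_\al)=a\cdot D(m_\al)+D(a)$, hence $D(a)=\bigl(D(am_\al)-D(m_\al)\bigr)+\bigl(D(m_\al)-a\cdot D(m_\al)\bigr)$; the first bracket tends to $0$ in norm by (3), and taking a weak* cluster point $E$ of the bounded net $D(m_\al)$ and using weak*-continuity of $f\mapsto a\cdot f$ gives $D(a)=E-a\cdot E=a\cdot(-E)-(-E)\cdot a$ on $P_1(A)$, hence on $A$ by linearity and the fact that $P_1(A)$ spans $A$. In Step~3 the construction $D(a)=a\cdot m_0-\la a,\e\ra m_0$ into $\e^\bot=(A^*/\Cbb\e)^*$ is the right one, and innerness yields $m=m_0-n$ with $n(\e)=0$, $m(\e)=1$ and $a\cdot m=m$ for all $a\in P_1(A)$; but nothing makes $m$ positive, and ``adding a fixed mean'' neither restores positivity nor preserves invariance, so you have not produced an element of $P_1(A^{**})$. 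You must either extract a positive invariant functional from $m$ (via the decomposition of elements of $\Mf^*$ into positive parts and a check that topological invariance survives, which is how \cite{Lau_F} handles it) or reorganize the cycle: note that $(3)\Rightarrow(1)$ is immediate (any weak* cluster point of $(m_\al)$ in the weak* compact set $P_1(A^{**})$ is a TLIM, positivity being automatic there), so it suffices to get from (2) back to (3) or (1) by some route that lands in $P_1(A^{**})$ from the start.
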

 
We note here that, being F-algebras, the group algebra $L^1(G)$, the measure algebra $M(G)$ of a locally compact group $G$ are left amenable if and only if $G$ is an amenable group; while the Fourier algebra $A(G)$ and the Fourier-Stieltjes algebra $B(G)$ are always left amenable \cite{Lau_F}. The hypergroup algebra $L^1(H)$ of a locally compact hypergroup $H$ with a left Haar measure is left amenable if and only if $H$ is an amenable hypergroup \cite{Skan}. Also the left amenability of the predual algebra of a Hopf von Neumann algebra, as an F-algebra, coincides with that studied in \cite{Ruan, Voic} (see also \cite{B-T} and references therein).

A \emph{metric semigroup} is a semitopological semigroup whose topology is generated by a metric $d$. We consider the following fixed point property for a metric semigroup $S$.

\begin{description}
\item[($F_U$)] If $ \Sc = \{T_s: s\in S\}$ is a separately continuous representation of $S$ on a compact subset $K$ of a locally convex space $(E,Q)$ and if the mapping $s\mapsto T_s(y)$ from $S$ into $K$ is uniformly continuous for each $y\in K$, then $K$ has a common fixed point for $S$.\label{FU}
\end{description}

Note that the mapping $s \mapsto T_s(y)$ is \emph{uniformly continuous} if for each $\tau\in Q$ and each $\epsilon >0$ there is $\delta>0$ such that 
\[
\tau(T_s(y) - T_t(y)) \leq \epsilon
\]
 whenever $d(s,t) \leq \delta$. For example, suppose that $S$ is a subset of a locally convex space $L$ that acts on $E$ such that $(a,y)\mapsto ay$: $L\times E \to E$ is separately continuous; if $a\mapsto ay$ is linear in $a\in L$ for each $y\in E$, then the induced action of $S$ on $E$, $(s,y)\mapsto sy$ ($s\in S$), is uniformly continuous in $s$ for each $y\in E$.

Let $A$ be an F-algebra. As we have known, $P_1(A)$ is indeed a metric topological semigroup with the product and topology inherited from $A$. The following was proved in \cite{L-Z3}.

\begin{prop}\label{fixed pt}
The F-algebra $A$ is left amenable if and only if the metric semigroup $P_1(A)$ has the fixed point property ($F_U$).
\end{prop}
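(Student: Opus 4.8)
The plan is to prove the two implications separately, in each case leaning on Proposition~\ref{left amen1-3}, which trades left amenability of $A$ for the existence of a TLIM on $A^*$ and, equivalently, for a norm-asymptotically invariant net in $P_1(A)$. For ``$A$ left amenable $\Rightarrow$ $P_1(A)$ has $(F_U)$'' I would begin with a net $(m_\al)\subset P_1(A)$ satisfying $\|a m_\al - m_\al\|\to 0$ for every $a\in P_1(A)$, as furnished by Proposition~\ref{left amen1-3}(3); since the metric $d$ on $S:=P_1(A)$ is the one inherited from the norm of $A$, this is just $d(a m_\al, m_\al)\to 0$. Given an action $\Sc=\{T_s:s\in S\}$ on a compact set $K\subseteq(E,Q)$ of the kind described in $(F_U)$, fix $y_0\in K$ and, by compactness, pass to a subnet $(\al_\be)_\be$ with $T_{m_{\al_\be}}(y_0)\to y^*\in K$. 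For each fixed $a\in S$ the \emph{uniform} continuity of $s\mapsto T_s(y_0)$ together with $d(a m_{\al_\be}, m_{\al_\be})\to 0$ forces $T_{a m_{\al_\be}}(y_0)-T_{m_{\al_\be}}(y_0)\to 0$ in $(E,Q)$, so $T_{a m_{\al_\be}}(y_0)\to y^*$; but also $T_{a m_{\al_\be}}(y_0)=T_a\bigl(T_{m_{\al_\be}}(y_0)\bigr)\to T_a(y^*)$ by continuity of $T_a$. Hence $T_a(y^*)=y^*$ for every $a\in S$, and $y^*$ is the required common fixed point.

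For the converse I would apply $(F_U)$ to one canonical action and read off a TLIM. Take $E:=A^{**}=\Mf^*$ equipped with its weak* topology and $K:=P_1(A^{**})=\{m\in A^{**}:m\ge 0,\ m(\e)=1\}$; since a positive functional on the $W^*$-algebra $\Mf$ has norm equal to its value at $\e$, $K$ lies in the closed unit ball of $A^{**}$, and being weak*-closed there it is weak*-compact, while $P_1(A)\subseteq K$ shows $K\neq\emptyset$. Let $S:=P_1(A)$ act on $K$ by $T_s(m):=s\cdot m$, the left module action of $s\in A$ on $m\in A^{**}$; this maps $K$ into $K$ because $P_1(A^{**})$ is a semigroup under the Arens multiplications, contains $P_1(A)$, and $s\cdot m$ is the Arens product of $s$ and $m$, and associativity of the module action makes $\Sc=\{T_s:s\in S\}$ a representation. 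One checks it is separately continuous (for fixed $s$, $m\mapsto s\cdot m$ is weak*-continuous since $\langle x, s\cdot m\rangle=\langle x\cdot s, m\rangle$ with $x\cdot s\in A^*$; for fixed $m$, $s\mapsto s\cdot m$ is norm-Lipschitz, $\|s\cdot m-t\cdot m\|\le\|s-t\|\,\|m\|$, hence weak*-continuous) and that $s\mapsto T_s(m)$ is uniformly continuous, because for each defining seminorm $\tau_x=|\langle x,\,\cdot\,\rangle|$ ($x\in A^*$) one has $\tau_x\bigl(T_s(m)-T_t(m)\bigr)=|\langle x\cdot(s-t), m\rangle|\le\|x\|\,d(s,t)$. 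Then $(F_U)$ produces $m^*\in K$ with $s\cdot m^*=m^*$ for all $s\in P_1(A)$, equivalently $m^*(x\cdot s)=m^*(x)$ for all $x\in A^*$ and $s\in P_1(A)$; that is, $m^*$ is a TLIM, so $A$ is left amenable by Proposition~\ref{left amen1-3}.

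I do not anticipate a deep obstacle in either direction; the content is in choosing the right objects. In the forward direction the key realization is that the uniform-continuity clause in $(F_U)$ is precisely the mechanism that transports the norm-asymptotic invariance of the net $(m_\al)$ (automatic from left amenability) into $Q$-convergence inside $K$. In the converse the only points requiring care are that the canonical action genuinely is a self-action of $P_1(A^{**})$ — this is where the F-algebra hypothesis, namely the identity $\e$ being multiplicative so that $P_1(A^{**})$ is a semigroup, comes in — and that linearity of the module action automatically supplies the uniform continuity demanded by $(F_U)$, which is exactly the situation flagged in the example noted right after the statement of $(F_U)$.
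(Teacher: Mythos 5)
The survey itself contains no proof of this proposition (it is quoted from \cite{L-Z3}), but your argument is correct and is essentially the one given there: the forward direction via the norm-asymptotically invariant net of Proposition~\ref{left amen1-3}(3) together with the uniform-continuity clause of ($F_U$), and the converse by applying ($F_U$) to the canonical action of $P_1(A)$ on the weak* compact set $P_1(A^{**})$, whose common fixed point is exactly a TLIM. The two points needing care --- that the module action of $P_1(A)$ preserves $P_1(A^{**})$ (via the Arens product) and that linearity of the module action supplies the required uniform continuity --- are both handled correctly.
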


A semitopological semigroup $S$ is \emph{extremely left amenable} if $LUC(S)$ has a multiplicative left invariant mean. Granirer showed in \cite{Gran_ELA} that a discrete semigroup is extremely left amenable if and only if any two elements of it have a common right zero (see \cite[Theorem~4.2]{L-Z2} for a short proof). It is due to  Mitchell \cite{Mitch_LUC} that a semitopological semigroup $S$ is extremely left amenable if and only if it has the following fixed point property.
\begin{description}
\item[($F_E$)] Every jointly continuous representation of $S$ on a compact Hausdorff space $C$ has a common fixed point in $C$.
\end{description}

 For an F-algebra $A$ it is pleasing that the left amenability of $A$ is equivalent to the extreme left amenability of $P_1(A)$ as the authors revealed in \cite{L-Z3}. 

\begin{thm}\label{ELA}
Let $A$ be an F-algebra. Then $A$ is left amenable if and only if $P_1(A)$ has the fixed point property ($F_E$).
\end{thm}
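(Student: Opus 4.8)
The plan is to characterize left amenability of $A$ through the fixed point property ($F_E$) of the metric topological semigroup $P_1(A)$ by combining Granirer's combinatorial criterion, Mitchell's fixed point theorem for extreme left amenability, and the Arens-product structure on $P_1(A)$. Recall from Proposition~\ref{left amen1-3} that $A$ is left amenable iff there exists a net $(m_\al)\subset P_1(A)$ with $am_\al - m_\al \to 0$ in norm for every $a\in P_1(A)$. The first step is to observe that $P_1(A)$, with the product inherited from $A$, is a genuine semitopological (indeed metric, hence separable-friendly) semigroup on which translations act, so ($F_E$) makes sense. The strategy is to prove the two implications separately.

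For the direction ($F_E$) $\Rightarrow$ left amenability: I would apply Mitchell's theorem (quoted just before the statement) which says ($F_E$) holds iff $LUC(P_1(A))$ has a multiplicative left invariant mean, equivalently (by Granirer, also quoted) iff any two elements of $P_1(A)$ have a common right zero. So assume $a,b\in P_1(A)$ admit $c\in P_1(A)$ with $ac = bc = c$. One then wants to produce a TLIM on $A^* = \Mf$, or equivalently the approximate-invariance net of Proposition~\ref{left amen1-3}(3). The key is that a right zero $c$ for $\{a,b\}$ gives $ac=c$; taking a suitable limit / using that $P_1(A)$ spans $A$ and that $A$ is norm-dense in its relevant completions, one upgrades ``common right zeros exist'' to ``for every finite $F\subset P_1(A)$ there is $c_F$ with $ac_F = c_F$ for all $a\in F$,'' and then a net $(c_F)$ indexed by finite subsets satisfies $ac_F - c_F \to 0$ (in fact $=0$ eventually), which is exactly condition (3), giving a TLIM and hence left amenability.

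For the converse, left amenability $\Rightarrow$ ($F_E$): by Granirer–Mitchell it suffices to show any $a,b\in P_1(A)$ have a common right zero in $P_1(A)$. Here I would use the approximate diagonal / approximate-TLIM net $(m_\al)$ from Proposition~\ref{left amen1-3}(3): $am_\al - m_\al \to 0$ and $bm_\al - m_\al \to 0$ in norm. Passing to a weak* cluster point $m\in P_1(A^{**})$ of $(m_\al)$ one gets a TLIM, but that lies in the bidual, not in $P_1(A)$, so one cannot directly read off a right zero there. The trick (this is the step I expect to be the main obstacle) is to exploit the \emph{extreme} structure: one must show the TLIM can be taken multiplicative, i.e.\ an extreme point / character, and then realize the relevant fixed point inside $P_1(A)$ itself rather than its bidual. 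Concretely, I would run the fixed point property ($F_U$) from Proposition~\ref{fixed pt} (which left amenability already grants) on the action of $P_1(A)$ on a suitable compact convex set, but strengthen the target: instead of an arbitrary fixed point one wants a fixed point that is multiplicative, which forces using that $P_1(A)$ has a \emph{metric} (hence the uniform-continuity hypothesis of ($F_U$) is automatic for the left-translation action, since $s\mapsto sy = s\cdot y$ is the restriction of a bounded bilinear map and so uniformly continuous in $s$) and that the set of multiplicative means is a closed face. Alternatively, and perhaps more cleanly, one reduces ($F_E$) to ($F_U$) directly: given a jointly continuous action of $P_1(A)$ on a compact Hausdorff $C$, embed $C$ into the compact convex set of probability measures $P(C)$, apply ($F_U$)/($F_E$)-type reasoning to get an invariant measure, and then use amenability to push it to a point mass — showing the support of an extreme invariant measure is a single point because common right zeros in $P_1(A)$ collapse orbits.

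The main obstacle, then, is bridging the gap between the \emph{bidual} TLIM supplied by abstract left amenability and the \emph{in-algebra} common-right-zero condition that ($F_E$) demands; resolving it requires genuinely using the metrizability of $P_1(A)$ and the Arens-multiplicative semigroup structure of $P_1(A^{**})$ — specifically, that a TLIM which is also a character (a point of the spectrum) must, by the norm density of $P_1(A)$ in the appropriate sense and continuity of the Arens product on bounded sets, already be approximable within $P_1(A)$, yielding the required common right zeros. Once that bridge is in place, both implications close via the Granirer and Mitchell characterizations quoted above.
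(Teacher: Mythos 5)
Your argument in both directions is routed through Granirer's characterization ``extremely left amenable $\Leftrightarrow$ any two elements have a common right zero,'' but that criterion is valid only for \emph{discrete} semigroups, and $P_1(A)$ must be treated as a topological (metric) semigroup here --- that is the whole point of the theorem. For a non-discrete semitopological semigroup, extreme left amenability of $LUC(S)$ does not require right zeros at all (the paper itself recalls that the unitary group of an infinite-dimensional Hilbert space is extremely left amenable, and a nontrivial group has no right zeros). A concrete counterexample to your reduction: take $A=\ell^1(\mathbb{Z})$, which is left amenable, so by the theorem $P_1(A)$ has ($F_E$); yet there is no $h\in P_1(\ell^1(\mathbb{Z}))$ with $\delta_1*h=h$, since that forces $h$ to be constant and hence not summable. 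So ``left amenable $\Rightarrow$ common right zeros in $P_1(A)$'' is false, and no approximation or density argument of the kind you sketch in your final paragraph can manufacture exact right zeros where none exist. The fallback you mention --- pass to an invariant probability measure on $P(C)$ and ``push it to a point mass'' --- also does not close the gap: for a non-affine action an invariant measure does not yield a fixed point, and the collapsing step again silently invokes the nonexistent right zeros.

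What survives, and what the correct argument looks like: the direction ($F_E$) $\Rightarrow$ left amenability needs no Granirer at all --- let $P_1(A)$ act on the weak* compact set $P_1(A^{**})$ of means on $A^*$ by $a\cdot m$; norm--weak* joint continuity is immediate from $|\la m_\al, x\cdot a_n - x\cdot a\ra|\le \|x\|\,\|a_n-a\|$, and a common fixed point is exactly a TLIM, so Proposition~\ref{left amen1-3} finishes. The converse is the genuinely hard direction, and it must be attacked by working with an arbitrary jointly continuous action of $P_1(A)$ on a compact Hausdorff $X$ directly (or with the spectrum of $LUC(P_1(A))$), using the approximately invariant net $(m_\al)\subset P_1(A)$ of Proposition~\ref{left amen1-3}(3) together with the metric/uniform structure of $P_1(A)$ --- in the spirit of Proposition~\ref{fixed pt} --- rather than via any right-zero combinatorics. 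As written, your proof does not establish either implication.
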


According to Theorem~\ref{ELA}, we  can characterize amenability of a group (resp.  semigroup) in terms of the fixed point property of normalized positive functions in the group/semigroup algebra.

\begin{cor}\label{amen gp semigp}
Let $G$ be a locally compact group and let $S$ be a semigroup. Then
\begin{enumerate}
\item The group $G$ is amenable if and only if the metric semigroup $P_1(G)=\{f\in L^1(G), f\geq 0, \|f\|_1 = 1\}$ has the fixed point property ($F_E$).
\item The semigroup $S$ is left amenable if and only if the metric semigroup $P_1(S)=\{f\in \ell^1(S), f\geq 0, \|f\|_1 = 1\}$ has the fixed point property ($F_E$).
\end{enumerate}
\end{cor}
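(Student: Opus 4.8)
The plan is to obtain Corollary~\ref{amen gp semigp} as the specialization of Theorem~\ref{ELA} to the F-algebras $A=L^1(G)$ and $A=\ell^1(S)$. To make this specialization legitimate, three things need checking: that $L^1(G)$ and $\ell^1(S)$ are F-algebras, that $P_1(A)$ is exactly the metric semigroup named in the statement, and that left amenability of the F-algebra $A$ translates into amenability of $G$ (resp. left amenability of $S$).

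For the first two points, note that $L^1(G)^*=L^\infty(G)$ is a commutative $W^*$-algebra whose unique predual is $L^1(G)$, and its identity $\e=1$ acts on $L^1(G)$ by $\langle 1,f\rangle=\int_G f\,d\lambda$, which is multiplicative because $\int_G f*g\,d\lambda=\bigl(\int_G f\,d\lambda\bigr)\bigl(\int_G g\,d\lambda\bigr)$; hence $L^1(G)$ is an F-algebra. Since $P_1(A)=P_1(A^{**})\cap A$, an $f\in L^1(G)$ lies in $P_1(L^1(G))$ precisely when $f\ge 0$ and $\int_G f\,d\lambda=1$, i.e. $P_1(L^1(G))=\{f\in L^1(G):f\ge 0,\ \|f\|_1=1\}$, equipped with the norm metric of $L^1(G)$, which is a metric topological semigroup under convolution as recalled in the excerpt. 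Replacing integrals by sums, the same argument shows that $\ell^1(S)$ (which is $M(S)$ for discrete $S$) is an F-algebra with $\ell^1(S)^*=\ell^\infty(S)$ and $P_1(\ell^1(S))=\{f\in\ell^1(S):f\ge 0,\ \|f\|_1=1\}$.

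By Theorem~\ref{ELA}, for each such $A$ the F-algebra $A$ is left amenable if and only if $P_1(A)$ has the fixed point property ($F_E$). For $A=L^1(G)$ it is recorded in the excerpt that left amenability of the F-algebra $L^1(G)$ is equivalent to amenability of $G$, which yields part~(1). For part~(2) it remains to identify left amenability of the F-algebra $\ell^1(S)$ with left amenability of the discrete semigroup $S$. By Proposition~\ref{left amen1-3}, the former says $\ell^\infty(S)$ carries a mean $m$ with $m(x\cdot\mu)=m(x)$ for all $\mu\in P_1(\ell^1(S))$ and $x\in\ell^\infty(S)$. Unwinding the right module action of $\ell^1(S)$ on its dual, one computes $x\cdot\delta_s=\ell_s x$ and, more generally, $x\cdot\mu=\sum_{s\in S}\mu(s)\,\ell_s x$ as a norm-convergent series; hence such an $m$ restricts (at $\mu=\delta_s$) to a left invariant mean on $\ell^\infty(S)$. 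Conversely a classical left invariant mean $m$ is automatically a mean in the present sense (since $\|m\|=m(1)=1$) and, by boundedness of $m$, satisfies $m(x\cdot\mu)=\sum_s\mu(s)\,m(\ell_s x)=m(x)$, so it is a TLIM for $\ell^1(S)$. Thus the F-algebra $\ell^1(S)$ is left amenable iff $\ell^\infty(S)$ has a LIM, iff $S$ is left amenable, which is part~(2).

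I do not expect a genuine obstacle: the whole mathematical content is carried by Theorem~\ref{ELA}, and everything else is routine verification --- the F-algebra axioms, the identification of $P_1(A)$, and the translation of ``left amenable'' into classical terms. The one point that requires care is this last translation, where the left/right conventions for the module actions in the F-algebra formalism must be tracked so that invariance under $P_1(A)$ in Theorem~\ref{ELA} corresponds to left (rather than right) invariance of the mean; for $G$ this is already settled in the literature cited in the excerpt, and for $S$ it is the short module-action computation indicated above.
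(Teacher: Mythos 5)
Your proposal is correct and follows exactly the route the paper intends: the corollary is obtained by specializing Theorem~\ref{ELA} to the F-algebras $L^1(G)$ and $\ell^1(S)$, using the identification of $P_1(A)$ with the stated metric semigroup and the standard fact (recorded in the paper, citing \cite{Lau_F}) that left amenability of these F-algebras coincides with amenability of $G$, respectively left amenability of $S$. Your explicit verification of the module-action bookkeeping (that $x\cdot\delta_s=\ell_s x$, so a TLIM restricts to a LIM and conversely) is exactly the routine translation the paper leaves implicit.
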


\begin{remark}
For $A=A(G)$, the Fourier algebra of a locally compact group $G$, the necessity part of Theorem~\ref{ELA} was obtained earlier in \cite{Lau_AG}.
\end{remark}

\begin{remark}
From Theorem~\ref{ELA}, a locally compact group is amenable if and only if $LUC(S_G)$ has a multiplicative left invariant mean, where $S_G$ is the metric semigroup $P_1(G) =\{f\in L^1(G): f\geq 0, \la f, 1 \ra =1\}$. As a consequence, $AP(S_G)$ has a left invariant mean (which is equivalent to the left reversibility of $\overline{(S_G)^a}$, the almost periodic compactification of $S_G$) if  $G$ is amenable. 
\end{remark}

\begin{remark}
A common fixed point property for affine actions of $P_1(A)$ with a weak topology on compact convex sets has been studied in \cite{D-N-N} for left amenable F-algebras $A$.
\end{remark}

Let $E$ be a separated locally convex vector space and $X$ a subset of $E$. Given an integer $n>0$ we denote  by $\Lc_n(X)$ the collection of all $n$-dimensional subspaces of $E$ that are included in $X$. Let $S$ be a semigroup and $\Sc =\{T_s: s\in S\}$ a linear representation of $S$ on $E$. We say that $X$ is \emph{$n$-consistent} with respect to $S$ if $\Lc_n(X)\neq \emptyset$ and $\Lc_n(X)$ is $S$-invariant, that is $T_s(L) \in \Lc_n(X)$ for all $s\in S$ whenever $L\in \Lc_n(X)$. We say that the representation $\Sc$ is \emph{jointly continuous on compact sets} if the following is true: For each compact set $K\subset E$, if $(s_\al) \subset S$ and $(x_\al)\subset K$ are such that $s_\al \overset{\al}{\to}s\in S$,  $x_\al \overset{\al}{\to}x\in K$ and $T_{s_\al}(x_\al)\in K$ for all $\al$, then $T_{s_\al}(x_\al)\overset{\al}{\to}T_s(x)$. Obviously, if the mapping $(s,x) \mapsto T_s(x)$: $S\times E \to E$ is continuous, then $\Sc$ is jointly continuous on compact sets.
 The following result was proved in \cite{L-Z3}.

\begin{thm}\label{KF thm}
Let $A$ be an F-algebra. If $A$ is left amenable then $S=P_1(A)$ has the following $n$-dimensional invariant subspace property for each $n>0$.
\begin{description}
\item[($F_n$)] Let $E$ be a separated locally convex vector space and $\Sc$ a linear representation of $S=P_1(A)$ on $E$ such that the mapping $s\mapsto T_s(x)$ is continuous for each fixed $x\in E$ and $\Sc$ is jointly continuous on compact subsets of $E$. If $X$ is a subset of $E$ $n$-consistent with respect to $S$, and if there is a closed $S$-invariant subspace $H$ of $E$ with codimension $n$ such that $(x+H)\cap X$ is compact for each $x\in E$, then there is $L_0\in \Lc_n(X)$ such that $T_s(L_0) = L_0$ ($s\in S$). \end{description}
Conversely, if ($F_n$) holds for some $n>0$, then $A$ is left amenable.
\end{thm}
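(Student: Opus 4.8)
The plan is to prove the two implications separately: the forward direction by realising $\Lc_n(X)$ as a compact Hausdorff space with a jointly continuous $S$-action and invoking the extreme left amenability of $P_1(A)$, and the converse by feeding $(F_n)$ an explicit representation on a bidual whose forced invariant subspace is a topological left invariant mean.

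For the implication ``$A$ left amenable $\Rightarrow (F_n)$'', recall that by Theorem~\ref{ELA} the metric semigroup $S=P_1(A)$ then has the fixed point property $(F_E)$, so it suffices to topologise $\Lc_n(X)$ so that it becomes a compact Hausdorff space on which $S$ acts jointly continuously with fixed points the desired invariant subspaces. First I would observe that, since $(0+H)\cap X$ is compact it contains no line through $0$, whence $L\cap H=\{0\}$ and therefore $E=L\oplus H$ for every $L\in\Lc_n(X)$. Fixing a basis $\bar e_1,\dots,\bar e_n$ of the $n$-dimensional quotient $E/H$ with lifts $e_1,\dots,e_n\in E$, and writing $\pi\colon E\to E/H$ for the quotient map, the restriction $\pi|_L$ is a linear isomorphism, so each $L\in\Lc_n(X)$ meets the coset $e_i+H$ in a unique point $v_i^L$, which lies in the compact set $K_i:=(e_i+H)\cap X$. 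This yields an injection $\Psi\colon\Lc_n(X)\to K:=K_1\times\cdots\times K_n$, $\Psi(L)=(v_1^L,\dots,v_n^L)$, with $L=\operatorname{span}\{v_1^L,\dots,v_n^L\}$.

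The crucial point is that $\Psi(\Lc_n(X))$ is closed in the compact Hausdorff space $K$: if $\Psi(L^{(\alpha)})\to(v_1,\dots,v_n)$, then for each $\lambda\in\Rb^n$ the net $\sum_i\lambda_i v_i^{(\alpha)}$ lies in the compact (hence closed) slice $\big((\textstyle\sum_i\lambda_i e_i)+H\big)\cap X$, so its limit $\sum_i\lambda_i v_i$ does too; thus $\operatorname{span}\{v_1,\dots,v_n\}\subseteq X$, and since the $v_i$ project onto the independent $\bar e_i$ this span is $n$-dimensional, hence in $\Lc_n(X)$. Transporting the topology to $\Lc_n(X)$ via $\Psi$, the action $s\cdot L:=T_s(L)$ (well defined into $\Lc_n(X)$ by $n$-consistency) takes the coordinate form $\widehat T_s(v)_k=\sum_i\big[(C^{(s)})^{-1}\big]_{ik}\,T_s(v_i)$, where $C^{(s)}\in GL_n$ is the matrix of the map induced by $T_s$ on $E/H$ (well defined since $H$ is $T_s$-invariant, and invertible because $T_s$ maps a complement of $H$ onto a complement of $H$). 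Then I would check joint continuity: $s\mapsto C^{(s)}$ is continuous because $s\mapsto T_s(e_i)$ and $\pi$ are, hence so is $s\mapsto(C^{(s)})^{-1}$; and since $T_s(v_i)=\sum_k C^{(s)}_{ki}\,\widehat T_s(v)_k$ with $\widehat T_s(v)_k\in K_k$, along any convergent net in $S\times\Psi(\Lc_n(X))$ the vectors $T_{s_\alpha}(v_i^{(\alpha)})$ eventually remain in one compact subset of $E$, so the hypothesis that $\Sc$ is jointly continuous on compact subsets forces $T_{s_\alpha}(v_i^{(\alpha)})\to T_s(v_i)$ and hence $\widehat T_{s_\alpha}(v^{(\alpha)})\to\widehat T_s(v)$. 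As $\Psi$ conjugates $\widehat T$ to $L\mapsto T_s(L)$ it is a representation, so $(F_E)$ gives a common fixed point $\Psi(L_0)$, i.e. $T_s(L_0)=L_0$ for all $s$.

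For the converse, assuming $(F_n)$ for some fixed $n>0$, the plan is to exhibit a representation satisfying the hypotheses of $(F_n)$ whose forced invariant subspace yields a TLIM on $A^*$, giving left amenability by Proposition~\ref{left amen1-3}. I would take $E=A^{**}\oplus\Rb^{n-1}$ with $A^{**}$ in its weak* topology, let $S=P_1(A)$ act by $T_s(m,y)=(s\cdot m,\,y)$, where $s\cdot m$ is the canonical action of $A\subseteq A^{**}$ on $A^{**}$ entering the definition of a TLIM, and set $H=\{(h,0):\la h,\e\ra=0\}$ and $X=\{(tm,y):t\in\Rb,\ m\in P_1(A^{**}),\ y\in\Rb^{n-1}\}$. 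Since $\e$ is multiplicative and $\la s,\e\ra=1$ for $s\in P_1(A)$, one has $\e\cdot s=\e$ and hence $\la s\cdot m,\e\ra=\la m,\e\ra$, so $H$ is a weak*-closed $S$-invariant subspace of codimension $n$ and $X$ is $S$-invariant; moreover $\Lc_n(X)=\{\Rb m\oplus\Rb^{n-1}:m\in P_1(A^{**})\}\ne\emptyset$, so $X$ is $n$-consistent, and for $z=(z_1,z_2)\in E$ the slice $(z+H)\cap X=\la z_1,\e\ra\,P_1(A^{**})\times\{z_2\}$ is weak*-compact since $P_1(A^{**})$ is. The continuity requirements in $(F_n)$ hold because $s\mapsto s\cdot m$ is norm continuous and because weak*-compact sets are norm bounded while the right action of $s$ on $A^*$ is norm continuous in $s$. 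Thus $(F_n)$ yields $L_0=\Rb m_0\oplus\Rb^{n-1}$ with $T_s(L_0)=L_0$, i.e. $\Rb(s\cdot m_0)=\Rb m_0$; since $P_1(A)\subseteq P_1(A^{**})$ and $P_1(A^{**})$ is a semigroup under the Arens product, $s\cdot m_0$ and $m_0$ are both means, so $s\cdot m_0=m_0$ for all $s\in P_1(A)$ and $m_0$ is a TLIM.

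I expect the main obstacle to be the forward direction, and within it the joint continuity of the induced action $\widehat T$ on the parametrising compact space: compactness of $\Psi(\Lc_n(X))$ drops out of the slice-compactness hypothesis fairly cleanly, but the normalising factor $(C^{(s)})^{-1}$ makes continuity delicate, and the device of re-expressing $T_s(v_i)$ through the already normalised $\widehat T_s(v)_k\in K_k$ is what confines the relevant vectors to a fixed compact set so that the ``jointly continuous on compact subsets'' hypothesis can be applied.
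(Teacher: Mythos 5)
Your proposal is correct, and it follows what is essentially the route of the cited source \cite{L-Z3} (the survey itself only states the theorem and refers there for the proof): the forward direction by parametrizing $\Lc_n(X)$ as a closed subset of the compact product $\prod_i\bigl((e_i+H)\cap X\bigr)$, checking joint continuity of the induced action via the normalizing matrices $C^{(s)}$, and invoking Theorem~\ref{ELA}/$(F_E)$; the converse by feeding $(F_n)$ the canonical representation on $A^{**}\oplus\Rb^{n-1}$ (weak* topology) with $H=\ker\la\cdot,\e\ra\oplus\{0\}$ and $X$ the scalar multiples of $P_1(A^{**})$ crossed with $\Rb^{n-1}$, so that the invariant line is a TLIM. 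The delicate points you flag are handled correctly, in particular the re-expression $T_s(v_i)=\sum_k C^{(s)}_{ki}\widehat T_s(v)_k$ that confines the image vectors to a fixed compact set, and the only step left slightly implicit (that a linear subspace of $A^{**}$ contained in the union of the positive and negative cones is at most one-dimensional, via $w=\la u_2,\e\ra u_1-\la u_1,\e\ra u_2$) is routine.
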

 As a consequence of Theorems~\ref{KF thm} and \ref{ELA} we can get the following result that settles the open problem 5 in \cite{Lau_finite}.

\begin{cor}\label{quantum}
Let $\Gb$ be a locally compact quantum group. Then $L^1(\Gb)$ is left amenable if and only if the topological semigroup $P_1(L^1(\Gb))$ has the fixed point property ($F_E$) if and only if $P_1(L^1(\Gb))$ has the \mbox{finite} invariant subspace property ($F_n$) for some integer (and then for all integers) $n\geq 1$.
\end{cor}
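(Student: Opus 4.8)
The plan is to deduce Corollary~\ref{quantum} directly from the two main theorems cited, Theorem~\ref{ELA} and Theorem~\ref{KF thm}, by observing that for a locally compact quantum group $\Gb$ the algebra $A = L^1(\Gb)$ is an F-algebra, so all hypotheses needed to invoke those theorems are automatically in place. Recall that $L^1(\Gb)$ is by definition the predual of the von Neumann algebra $L^\infty(\Gb)$, and the counit extends to a multiplicative linear functional on $L^1(\Gb)$ (equivalently, the identity $\e$ of the $W^*$-algebra $L^\infty(\Gb)$ is multiplicative on the predual); hence $L^1(\Gb)$ falls under the definition of F-algebra given in Section~4. Consequently $P_1(L^1(\Gb))$ is a topological (indeed metric, being a subset of the Banach space $L^1(\Gb)$) semigroup with the inherited product and topology, exactly the object to which the fixed point properties ($F_E$) and ($F_n$) apply.

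First I would state the chain of equivalences to be proved: (i) $L^1(\Gb)$ is left amenable; (ii) $P_1(L^1(\Gb))$ has ($F_E$); (iii) $P_1(L^1(\Gb))$ has ($F_n$) for some integer $n\geq 1$; (iv) $P_1(L^1(\Gb))$ has ($F_n$) for all integers $n\geq 1$. The equivalence (i)$\Leftrightarrow$(ii) is precisely Theorem~\ref{ELA} applied to the F-algebra $A = L^1(\Gb)$. For the remaining equivalences I would invoke Theorem~\ref{KF thm} with $A = L^1(\Gb)$ and $S = P_1(L^1(\Gb))$: the forward direction there gives that left amenability implies ($F_n$) for \emph{every} $n>0$, which yields (i)$\Rightarrow$(iv); the trivial implication (iv)$\Rightarrow$(iii) needs no argument; and the converse half of Theorem~\ref{KF thm} states that ($F_n$) for \emph{some} $n>0$ already forces $A$ to be left amenable, giving (iii)$\Rightarrow$(i). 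Closing the loop through (i) establishes all four statements are equivalent, which is exactly the content of the corollary. It may be worth a sentence reminding the reader that $L^1(\Gb)$ left amenable in the F-algebra sense coincides with the quantum-group amenability notion of Ruan and of B\'edos--Tuset, as already noted in the text after Proposition~\ref{left amen1-3}, so that the statement settles Problem~5 of \cite{Lau_finite} in its original formulation.

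Since everything reduces to quoting two already-proved theorems, there is essentially no technical obstacle; the only point deserving care is the verification that $L^1(\Gb)$ genuinely satisfies the F-algebra axioms for an arbitrary locally compact quantum group $\Gb$ (in the von Neumann algebraic Kustermans--Vaes sense), in particular that the predual is unique and that the counit is norm-bounded and multiplicative on it. I would handle this by citing the standard structure theory of locally compact quantum groups (e.g. via the references \cite{D-L-S} or the discussion preceding Proposition~\ref{left amen1-3}, where Hopf--von Neumann and quantum group preduals are listed among the examples of F-algebras), rather than reproving it here. Thus the proof of Corollary~\ref{quantum} is short: identify $L^1(\Gb)$ as an F-algebra, then read off (i)$\Leftrightarrow$(ii) from Theorem~\ref{ELA} and (i)$\Leftrightarrow$(iii)$\Leftrightarrow$(iv) from Theorem~\ref{KF thm}.
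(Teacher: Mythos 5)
Your proof is correct and follows exactly the route the paper intends: the paper presents this corollary as an immediate consequence of Theorems~\ref{KF thm} and \ref{ELA}, obtained by recognizing $L^1(\Gb)$ as an F-algebra and reading off the equivalences. Your additional care in verifying the F-algebra axioms for the quantum group predual is a reasonable elaboration but does not change the argument.
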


\section{Nonlinear actions on unbounded sets}

In \cite{A-T}  Atsushiba and Takahashi introduced the concept of common attractive points for a nonexpansive representation of a semigroup $S$ on a set $C$ in a Hilbert space $H$. They showed that $F(S)\neq \emptyset$ for commutative $S$ if there is a common attractive point for $S$ \cite[Lemma~3.1]{A-T}. They showed further that for commutative semigroups $S$, if $\{T_sc, s\in S\}$ is bounded for some $c\in C\subset H$, then the set $A_C(\Sc)$ of all attractive points of $\Sc$ is not empty. As a consequence, $F(S)\neq \emptyset$ \cite[Theorem~4.1]{A-T}. We note that the assumption that $\{T_sc, s\in S\}$ is bounded for some $c\in C\subset H$ cannot be dropped. Indeed, by a classical result of  W. Ray in \cite{Ray}, for every unbounded convex subset $C$ of a Hilbert space there is a nonexpansive mapping $T_0$: $C\to C$ that has no fixed point in $C$. In fact,  $\{T_0^n(c): n\in \Nbb\}$  is unbounded for all $c\in C$. Hence the representation $\mathcal N = \{T_0^n: n\in \Nbb\}$ of $(\Nbb, +)$ does not have a common fixed point in $C$. An investigation continuing that of \cite{A-T} may be seen in \cite{T-W-Y}.
 
 Let $\Sc = \{T_s:\; s\in S\}$ be a representation of a semigroup $S$ on a convex subset $C$ of a Banach space $E$.  A point $a\in E$ is an \emph{attractive point} of $\Sc$ 
if $\|a-T_sx\| \leq \|a-x\|$ for all $x\in C$. The set of all attractive points of $\Sc$ for $C$ is denoted by $A_C(\Sc)$.

Recall that a Banach space $E$ is \emph{strictly convex} if $\|\frac{x+y}{2}\| < 1$ whenever $x,y\in E$, $\|x\| = \|y\| =1$ and $x\neq y$. It is readily seen that for any distinct elements $x,y_1, y_2$ from a strictly convex space with $\|x-y_1\| = \|x-y_2\| = d$ we have $\|x-\frac{y_1+y_2}{2}\| < d$. $E$ is \emph{uniformly convex} if for each $0<\ep\leq 2$ there exists $\delta >0$ such that $\|\frac{x+y}{2}\|< 1-\delta$ whenever $x,y\in E$, $\|x\| = \|y\| = 1$ and $\|x-y\|\geq \ep$. It is known that if $E$ is uniformly convex then it is strictly convex and reflexive. Typical examples of a uniformly convex space are $L^p$-spaces ($p>1$).

Now suppose $E$ is a strictly convex and reflexive Banach space.
Let $C\neq \emptyset$ be a convex subset of $E$. For any $x\in E$,  it was shown in \cite{L-Z4} that there is a unique $u\in \overline C$, the norm closure of $C$ such that $\|u-x\|\leq \|c-x\|$ for all $c\in C$.  If $C$ is closed, then
 we  call  this $u$ the \emph{metric projection} of $x$ in $ C$ and denote it by $P_C(x)$. 
 If $E$ is a Hilbert space then $P_C(x)$ may also be characterized as the unique element $u\in C$ satisfying
\begin{equation}\label{mp}
\text{Re}\la x-u \, | \, u-c \ra \geq 0 \quad c\in C.
\end{equation}

\begin{lemma}\label{As to Fs}
Suppose that $E$ is a strictly convex and reflexive Banach space. Let $C\neq \emptyset$ be a closed convex subset of $E$ and $\Sc$ be a representation of a 
semigroup $S$ on $C$. If $a\in E$ is an attractive pint of $\Sc$, then $P_C(a)$ is a common fixed point for $S$ in $C$.
\end{lemma}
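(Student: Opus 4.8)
The plan is to show that the single point $u := P_C(a)$, the metric projection of the attractive point $a$ onto $C$, is already fixed by every $T_s$. First I would recall the fact quoted just before the statement (and proved in \cite{L-Z4}): since $E$ is strictly convex and reflexive, for each $x\in E$ there is a \emph{unique} point of $\overline C=C$ minimizing $\|x-\cdot\|$ over $C$ — reflexivity yields existence, strict convexity yields uniqueness. So $u=P_C(a)\in C$ is characterized by $\|a-u\|\le \|a-c\|$ for all $c\in C$, with equality forcing $c=u$.

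Then, fixing $s\in S$, I would run the following two-line squeeze. Because $\Sc=\{T_s:s\in S\}$ is a representation of $S$ on $C$, we have $T_s u\in C$, so minimality of $u$ gives $\|a-u\|\le\|a-T_s u\|$. On the other hand, the defining inequality of an attractive point, $\|a-T_s x\|\le\|a-x\|$ for all $x\in C$, applied to the particular point $x=u\in C$, gives $\|a-T_s u\|\le\|a-u\|$. Combining, $\|a-T_s u\|=\|a-u\|$, and the uniqueness clause in the characterization of $u$ forces $T_s u=u$. Since $s$ was arbitrary, $u=P_C(a)$ is a common fixed point for $S$ in $C$. (If one wants the Hilbert-space refinement one may instead invoke the variational characterization \eqref{mp}, but this is not needed in the general strictly convex reflexive setting.)

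I expect no serious obstacle here: the argument uses neither nonexpansiveness of the $T_s$ nor any algebraic structure on $S$ beyond $T_s(C)\subseteq C$, and the two hypotheses on $E$ enter only through the well-definedness of $P_C$. The only points requiring a moment's care are that the attractive-point inequality is quantified over \emph{all} $x\in C$, so it legitimately applies to $x=u$, and that $T_s u$ is again an admissible competitor in the minimization problem defining $u$; once these are noted, the uniqueness of the nearest point does all the work.
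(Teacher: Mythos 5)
Your proposal is correct and is essentially the paper's own proof: both apply the attractive-point inequality to $u=P_C(a)$ to conclude $\|a-T_su\|\le\|a-u\|\le\|a-c\|$ for all $c\in C$, and then invoke uniqueness of the metric projection to get $T_su=u$. The only cosmetic difference is that you first extract the equality $\|a-T_su\|=\|a-u\|$ before citing uniqueness, whereas the paper notes directly that $T_su$ is itself a nearest point to $a$ in $C$ and hence equals $P_C(a)$.
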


\begin{proof}
Let $u = P_C(a)$. 
Since $a$ is attractive,
\[
\|a - T_tu\|\leq \|a-u\| \leq \|a-c\| 
\]
for all $c\in C$ ($t\in S$). Thus $T_tu = P_C(a) = u$ for all $t\in S$.
\end{proof}

\begin{remark}
  If $E$ is a general Banach space, the proof of Lemma~\ref{As to Fs} still works as long as $P_C (a)$ is uniquely defined.
\end{remark}

The converse of Lemma~\ref{As to Fs} is not true in general. Namely, even if $F(S)\neq \emptyset$, it still can happen that $A_C(\Sc) = \emptyset$. For example, let $E = \ell^p$ ($p\geq 1$). Then  $C = \{x\in E:\, x = (x_i)_{i=1}^\infty, x_1 \geq0\}$ is a  closed convex set in $\ell^p$. Consider $T((x_i)) = (x_1,x_1, x_2, x_3,\cdots)$. Then $T$ is a continuous mapping on $C$, and $T$ has fixed point $\hat0 = (0,0,0,\cdots)$. But T has no attractive point for $C$. As a consequence, the representation $\mathcal N = \{T^n:\, n\in \Nbb\}$ of $(\Nbb, +)$ has a common fixed point in $C$ but  has no attractive points for $C$.

However, if the representation is non-expansive, then any $x\in F(S)$ is clearly an attractive point of $\Sc$.
Therefore, we can conclude the following.

\begin{prop}\label{equiv 1}
Let $E$ be a reflexive, strictly convex Banach space and $C\neq \emptyset$ a closed convex subset of $E$. Suppose that $\Sc$ is a representation of a semigroup $S$ on $C$ as nonexpansive self mappings. Then $A_{C}(\Sc)\neq \emptyset$ if and only if $F(S) \neq \emptyset$. Moreover, $F(S) \subset A_C(\Sc)$.
\end{prop}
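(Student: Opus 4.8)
The plan is to prove the two implications separately; the reverse implication and the ``moreover'' clause turn out to be a single observation, so the real content is split unevenly.

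First I would establish the direction $F(S)\neq\emptyset \Rightarrow A_C(\Sc)\neq\emptyset$, simultaneously obtaining the inclusion $F(S)\subset A_C(\Sc)$. Pick any $x\in F(S)$, so that $T_sx = x$ for every $s\in S$. Since each $T_s$ is a nonexpansive self mapping of $C$, for every $y\in C$ and every $s\in S$ we have
\[
\|x - T_sy\| = \|T_sx - T_sy\| \leq \|x - y\|.
\]
By the definition of an attractive point this is exactly the assertion $x\in A_C(\Sc)$. Hence $F(S)\subset A_C(\Sc)$, and in particular $A_C(\Sc)\neq\emptyset$ whenever $F(S)\neq\emptyset$.

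For the converse, $A_C(\Sc)\neq\emptyset \Rightarrow F(S)\neq\emptyset$, I would simply invoke Lemma~\ref{As to Fs}: because $E$ is reflexive and strictly convex, the metric projection $P_C$ is well defined on all of $E$, and if $a\in A_C(\Sc)$ then Lemma~\ref{As to Fs} gives that $P_C(a)\in C$ is a common fixed point for $S$, whence $F(S)\neq\emptyset$. Combining the two directions yields the stated equivalence together with the inclusion.

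There is essentially no obstacle here; the only substantive ingredient is the existence and uniqueness of the metric projection $P_C(a)$ in a reflexive strictly convex Banach space, which is recorded just before Lemma~\ref{As to Fs} and is what makes that lemma applicable. Everything else is an immediate consequence of the nonexpansiveness of the representation, so no further estimates are required.
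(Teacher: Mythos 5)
Your proof is correct and follows the same route as the paper: the paper likewise observes that nonexpansiveness makes every common fixed point an attractive point (giving $F(S)\subset A_C(\Sc)$ and one implication), and deduces the converse implication directly from Lemma~\ref{As to Fs} via the metric projection $P_C(a)$.
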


It was shown in \cite{L-Z4} that $A_C(\Sc)\neq \emptyset$ for a nonexpansive representation $\Sc$ of $S$ on a subset $C$ of a Hilbert space if $S$ has a certain amenability property. In particular, we have the following theorem.

\begin{thm}\label{nonempty As}
Let $C\neq \emptyset$ be a subset of a Hilbert space $H$ and $\Sc$ be a representation of a semitopological semigroup $S$ on $C$ as nonexpansive self mappings. Suppose that $\{T_sc: s\in S\}$ is bounded for some $c\in C$. Then $A_C(\Sc) \neq \emptyset$ if any of the following conditions holds.
\begin{enumerate}
\item $C_b(S)$ has a LIM and the mapping $s\mapsto T_sc$ is continuous from $S$ into $(C,\text{wk})$;
\item $S$ is left amenable and the action of $S$ on $C$ is weakly jointly continuous;
\item $AP(S)$ has a LIM and the action of $S$ on $C$ is weakly separately continuous and weakly equicontinuous continuous;
\item $WAP(S)$ has a LIM mean and the action of $S$ on $C$ is weakly separately continuous and weakly quasi-equicontinuous.
\end{enumerate}
If, in addition, $C$ is convex and closed, then $F(S) \neq \emptyset$. 
\end{thm}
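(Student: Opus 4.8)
The plan is to manufacture a common attractive point $a\in H$ for $\Sc$ in $C$; since $H$ is strictly convex and reflexive, Proposition~\ref{equiv 1} (or Lemma~\ref{As to Fs}) then yields the final assertion $F(S)\neq\emptyset$ in case $C$ is moreover closed and convex, with $P_C(a)$ a common fixed point. Observe first that, as each $T_s$ is nonexpansive and $\{T_sc:s\in S\}$ is bounded, every orbit $\{T_sx:s\in S\}$ ($x\in C$) is bounded, hence relatively weakly compact. Write $R=\sup_s\|T_sc\|$. In cases (2)--(4), where $x\mapsto T_sx$ is weakly continuous, the set $K_0=\overline{\{T_sc:s\in S\}}^{\,\text{wk}}$ is weakly compact and satisfies $T_s(K_0)\subseteq K_0$ for every $s$, so one is faced with a family $\{T_s|_{K_0}:s\in S\}$ of weakly continuous self-maps of the weakly compact set $K_0$.

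The construction uses a left invariant mean. For $z\in H$ set $g_z(s)=\la T_sc,z\ra$ ($s\in S$), a function on $S$ bounded by $R\|z\|$. The first task is to verify that $g_z$ lies in the appropriate left-translation-invariant subspace $X$ of $C_b(S)$ on which a LIM exists: $X=C_b(S)$ in case (1) (immediate from weak continuity of $s\mapsto T_sc$); $X=LUC(S)$ in case (2) (left amenability of $S$ furnishes a LIM on $LUC(S)$; weak joint continuity of the action on $S\times K_0$, together with compactness of $K_0$, makes $s\mapsto T_s|_{K_0}$ continuous into $C(K_0,\text{wk})$, so $s\mapsto\ell_sg_z$ is norm continuous); $X=AP(S)$ in case (3) (weak equicontinuity of $\{T_s|_{K_0}\}$ and the Arzel\`a--Ascoli theorem make $\{\ell_sg_z:s\in S\}$ relatively norm compact); $X=WAP(S)$ in case (4) (appeal to the analysis of weak quasi-equicontinuity in \cite{L-Z1}: the product-topology closure of $\{T_s|_{K_0}\}$ consists of continuous maps, and a Grothendieck-type double-limit argument yields relative weak compactness of $\{\ell_sg_z:s\in S\}$). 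Granting this, fix a LIM $m$ on $X$. Then $z\mapsto m(g_z)$ is a bounded functional on $H$, so by the Riesz representation theorem there is a unique $a\in H$ with $\la a,z\ra=m(g_z)$ for all $z\in H$; and since $X$ and $m$ are left invariant, $m_s\la T_{ts}c,z\ra=m_s\la T_sc,z\ra=\la a,z\ra$ for every $t\in S$ and $z\in H$.

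To show $a$ is an attractive point, extend $m$ to a functional $\tilde m$ on all bounded real-valued functions on $S$ by $\tilde m(f)=\sup\{m(h):h\in X,\ h\le f\}$. It is routine that $\tilde m$ is finite-valued, monotone, positively homogeneous, superadditive, coincides with $m$ on $X$, satisfies $\tilde m(h+f)=m(h)+\tilde m(f)$ whenever $h\in X$, and is left \emph{super}-invariant: $\tilde m(\ell_tf)\ge\tilde m(f)$. Now fix $x\in C$ and $t\in S$, and put $\Theta(s)=\|T_sc-x\|^2$. Applying the polarization identity $2\,\text{Re}\la u,v\ra=\|u\|^2+\|v\|^2-\|u-v\|^2$ and the inequality $\|T_t(T_sc)-T_tx\|\le\|T_sc-x\|$ yields, for every $s\in S$,
\[
2\,\text{Re}\la T_{ts}c,\,x-T_tx\ra+\Theta(ts)\ \le\ \big(\|x\|^2-\|T_tx\|^2\big)+\Theta(s).
\]
Here the left-hand side is $p(s)+\Theta(ts)$ with $p=2\,\text{Re}\,\ell_tg_{x-T_tx}\in X$. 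Applying $\tilde m$, and using $\tilde m(p+\ell_t\Theta)=m(p)+\tilde m(\ell_t\Theta)\ge m(p)+\tilde m(\Theta)$ on the left, $m(p)=2\,\text{Re}\la a,x-T_tx\ra$, and on the right $\tilde m(\Theta+c)=\tilde m(\Theta)+c$ for the constant $c=\|x\|^2-\|T_tx\|^2$, the finite quantity $\tilde m(\Theta)$ cancels, leaving $2\,\text{Re}\la a,x-T_tx\ra\le\|x\|^2-\|T_tx\|^2$, which is exactly $\|a-T_tx\|^2\le\|a-x\|^2$. As $x\in C$ and $t\in S$ were arbitrary, $a\in A_C(\Sc)$, and the theorem follows.

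I expect the main obstacle to be the case-by-case verification, in the second paragraph, that $g_z$ belongs to the correct function space---especially case (4), where weak quasi-equicontinuity must be converted, via a double-limit criterion for weak compactness, into membership in $WAP(S)$; this is precisely where the continuity and (quasi-)equicontinuity hypotheses on the action are consumed. A more delicate point is getting the \emph{direction} of invariance right for $\tilde m$: it is the super-invariance $\tilde m(\ell_t\Theta)\ge\tilde m(\Theta)$ that makes the cancellation work, and it is the passage to the submean $\tilde m$ that permits handling $\Theta=\|T_\cdot c-x\|^2$, which need not be continuous, without assuming norm continuity of the orbit map. This program is carried out in \cite{L-Z4}.
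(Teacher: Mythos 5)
The survey gives no proof of this theorem --- it is quoted from \cite{L-Z4} --- so the only comparison possible is with the argument of that source, which your proposal reconstructs in essentially the expected form. The core computation is correct: defining $a$ by $\la a,z\ra=m(g_z)$ via Riesz representation, deriving the pointwise inequality $p(s)+\Theta(ts)\le\Theta(s)+\|x\|^2-\|T_tx\|^2$ from nonexpansiveness of $T_t$ applied to the pair $(T_sc,x)$, and cancelling the finite quantity $\tilde m(\Theta)$ using the monotone, superinvariant extension $\tilde m(f)=\sup\{m(h):h\in X,\ h\le f\}$ does yield $2\,\mathrm{Re}\la a,x-T_tx\ra\le\|x\|^2-\|T_tx\|^2$, which is equivalent to $\|a-T_tx\|\le\|a-x\|$; all the properties you claim for $\tilde m$ (finiteness, monotonicity, additivity against members of $X$, $\tilde m(\ell_tf)\ge\tilde m(f)$) are easily checked, and the directions of the inequalities are consistent. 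Your use of $\tilde m$ is exactly what is needed, since $s\mapsto\|T_sc-x\|^2$ need not lie in $X$ even in case (1). The reduction of the final assertion to Lemma~\ref{As to Fs} via $P_C(a)$ is also correct.

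The one genuine soft spot is the verification that $g_z\in X$ in cases (2)--(4), which you flagged yourself. Each of your arguments passes through the weak closure $K_0$ of the orbit, but the theorem assumes only that $C$ is a subset of $H$, so $K_0$ need not be contained in $C$, and the continuity and (quasi-)equicontinuity hypotheses, stated for the action on $C$, cannot be invoked at points of $K_0\setminus C$ without justification. For instance, in case (2) the subnet argument that upgrades weak joint continuity to norm continuity of $s\mapsto\ell_sg_z$ requires the weak limit $y$ of a net $T_{t_\al}c$ to be a point at which the action is defined and jointly continuous. This must be patched --- e.g.\ by carrying out the estimates on a suitable invariant weakly compact set, or by interpreting the equicontinuity hypotheses on the closed orbit --- before the membership claims, and hence the existence of the mean $m$ applied to $g_z$, are secure. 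Apart from this, the proposal is sound and follows the program of \cite{L-Z4}.
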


For left (right) reversible semigroups we obtained the following in \cite{L-Z4}.

\begin{thm}\label{left rev}
Let $S$ be a left reversible and separable semitopological semigroup, and let $\Sc=\{T_s: s\in S\}$ be a representation of $S$ on a weakly closed subset $C\neq \emptyset$ of a Hilbert space $H$ as norm nonexpansive and weakly jointly continuous self mappings. If there is $c\in C$ such that $\{T_s c: s\in S\}$ is bounded, then $A_C(\Sc) \neq \emptyset$. In particular $F(S) \neq \emptyset$ if $C$ is convex.
\end{thm}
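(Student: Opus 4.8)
The plan is to realise the attractive point as a limit of Chebyshev centres of the ``tails'' of the orbit of $c$, following Lim's technique for left reversible semigroups: left reversibility turns the orbit into a net, and weak joint continuity controls how $\Sc$ moves the tails. First a remark on separability: since each map $s\mapsto T_sz$ is weakly continuous, the sets $\{s\in S:\|a-T_sx\|\le\|a-x\|\}$ are closed in $S$, so an attractive point for a dense subsemigroup of $S$ is attractive for $S$; separability thus lets one work along a countable (left reversible) subsemigroup, which keeps the limiting arguments below sequential. On $S$ define $a\preceq b$ iff $\overline{bS^1}\subseteq\overline{aS^1}$ (with $S^1=S\cup\{1\}$); continuity of right translations gives $\overline{aS^1}s\subseteq\overline{aS^1}$, and the hypothesis $\overline{aS}\cap\overline{bS}\neq\emptyset$ makes $(S,\preceq)$ upward directed.

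For $w\in S$ put $D_w:=\overline{\{T_uc:u\succeq w\}}^{\,\text{wk}}$; as the orbit is bounded and $H$ is reflexive, these are nonempty, weakly compact, and decrease in $w$. The crucial identity is $T_t(D_w)=D_{tw}$ for all $t,w$. Indeed $\{u:u\succeq w\}=\overline{wS^1}$, so $\{tu:u\succeq w\}=t\overline{wS^1}$ is dense in $\overline{twS^1}=\{v:v\succeq tw\}$; pushing this through the weakly continuous map $u\mapsto T_uc$, and using that $T_t$ commutes with the weak closure of a bounded set (weak joint continuity), one gets $T_t(D_w)=\overline{\{T_{tu}c:u\succeq w\}}^{\,\text{wk}}=D_{tw}$.

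Let $a_w:=c(D_w)$, $\rho_w:=\mathrm{rad}(D_w)$ be the Chebyshev centre and radius in $H$. In a Hilbert space, $A\subseteq\overline B(y,\rho)$ forces $\|y-c(A)\|^2\le\rho^2-\mathrm{rad}(A)^2$. Applied to $D_{w'}\subseteq D_w\subseteq\overline B(a_w,\rho_w)$ for $w\preceq w'$, this gives $\|a_w-a_{w'}\|^2\le\rho_w^2-\rho_{w'}^2$; as $\rho_w$ decreases to some $\rho_\infty$, the net $(a_w)$ is norm Cauchy and converges to a point $a\in H$. Applied to $D_{tw}=T_t(D_w)\subseteq\overline B(T_ta_w,\rho_w)$ (legitimate because $D_w\subseteq C$ and $T_t$ is nonexpansive), it gives $\|T_ta_w-a_{tw}\|^2\le\rho_w^2-\rho_{tw}^2$; since $\rho_\infty\le\rho_{tw}\le\rho_w\to\rho_\infty$, estimating through a common $\preceq$-upper bound of $w$ and $tw$ yields $\|a_w-a_{tw}\|\to0$. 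If $C$ is convex then $a_w\in\overline{\mathrm{co}}\,D_w\subseteq C$, so $a\in C$, and the bound $\|a-T_ta\|\le\|a-a_w\|+\|a_w-a_{w'}\|+\|a_{w'}-a_{tw}\|+\|a_{tw}-T_ta_w\|+\|T_ta_w-T_ta\|$ has every summand tending to $0$: thus $T_ta=a$ for all $t$, so $a\in F(S)\cap A_C(\Sc)$ (the last because $\|a-T_tx\|=\|T_ta-T_tx\|\le\|a-x\|$). This settles the theorem, including the ``in particular'' clause, for convex $C$ (alternatively, Lemma~\ref{As to Fs} and Proposition~\ref{equiv 1} apply, $H$ being strictly convex and reflexive).

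The main obstacle is a genuinely non-convex weakly closed $C$: then $a_w\notin C$, $T_t$ is not defined at $a_w$, and $a$ need not be a fixed point, so $\|a-T_tx\|\le\|a-x\|$ has to be obtained directly. The estimates above only give $\|a-T_tx\|^2\le r(x)^2-\rho_\infty^2$ with $r(x)=\limsup_u\|T_uc-x\|$, which is not sharp. One then uses that every orbit $\{T_ux\}$ ($x\in C$) is bounded, that $\|T_ux-T_uc\|\le\|x-c\|$, and that the tails $D_w^x:=\overline{\{T_ux:u\succeq w\}}^{\,\text{wk}}$ obey the same identity $T_t(D_w^x)=D_{tw}^x$, so that running the construction simultaneously for $c$ and $x$ and exploiting that the nonexpansive action links the two orbits term-by-term along $(S,\preceq)$ recovers the attractive-point inequality; an alternative is to build $a$ as a ``weak mean'' $\langle a,y\rangle=m(\langle T_\cdot c,y\rangle)$ of the orbit via a left invariant mean $m$ on a suitable translation-invariant subspace of $\ell^\infty(S)$ (available since $S$ is left reversible) and feed $\|T_{su}c-T_sx\|^2\le\|T_uc-x\|^2$ into $m$. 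Making either route precise — controlling the norm-square terms along the net and securing the needed regularity from mere weak joint continuity — is the delicate point; throughout, it is the directedness of $(S,\preceq)$ and the identity $T_t(D_w)=D_{tw}$, both coming from left reversibility, that carry the argument.
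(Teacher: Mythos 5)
There is a genuine gap, and you have in fact located it yourself: your construction proves the theorem only when $C$ is convex, whereas the first (and main) assertion, $A_C(\Sc)\neq\emptyset$, is claimed for an arbitrary weakly closed $C$ --- the whole point of attractive-point theorems in the Atsushiba--Takahashi line is precisely that no convexity of $C$ is assumed. Your Chebyshev-centre argument along the tails $D_w$ is essentially Lim's fixed point argument for left reversible semigroups; it correctly yields a point $a$ with $a_w\to a$ and $\|T_ta_w-a_{tw}\|\to 0$, and when $C$ is convex this gives $a\in C$, $T_ta=a$, hence $a\in F(S)\subset A_C(\Sc)$ by nonexpansiveness. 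But for non-convex $C$ the centres $a_w$ leave $C$, and, as you note, the asymptotic-radius estimates only give the one-sided bounds $\|a-T_tx\|^2\le r(x)^2-\rho_\infty^2$ \emph{and} $\|a-x\|^2\le r(x)^2-\rho_\infty^2$, which cannot be compared. The structural reason is that the attractive-point inequality requires the \emph{exact} Hilbert-space identity $\mu_t\|T_tc-y\|^2=L+\|a-y\|^2$ (same constant $L$ for every $y\in H$), which one gets from a \emph{linear} left invariant mean via $\langle a,y\rangle=\mu_t\langle T_tc,y\rangle$, or from a weakly convergent subnet $T_{u_\al}c\rightharpoonup a$ with $\|T_{u_\al}c-a\|^2\to L$; the Chebyshev radius function $r^2$ satisfies only the inequality $r(y)^2\ge L+\|a-y\|^2$. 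Neither of your two proposed repairs is carried out, and the second one (the invariant-mean route) is not routine: the survey itself records that it is unknown whether left reversibility of a semitopological semigroup forces a LIM on $WAP(S)$, so one must instead use separability and left reversibility to build a suitable cofinal sequence in $(S,\preceq)$ and pass to weak limits; this is where the real work of the cited proof in [L-Z4] lies, and it is absent here.

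Two smaller points. First, your reduction via separability ("work along a countable left reversible subsemigroup") is not justified: a countable dense subsemigroup of a left reversible semitopological semigroup need not be left reversible --- this is exactly why the paper introduces \emph{strong} left reversibility and proves it only for metrizable $S$ \cite[Lemma~5.2]{L-Z1}; for merely separable $S$ the reduction needs an argument. Second, the Hilbert-space inequality you invoke should read $\|y-c(A)\|^2\le 2\bigl(\rho^2-\mathrm{rad}(A)^2\bigr)$ (parallelogram law applied at the midpoint of $y$ and $c(A)$); the missing factor of $2$ is harmless for the Cauchy argument but should be recorded. With these caveats, what you have is a correct proof of the "in particular" clause and of $A_C(\Sc)\neq\emptyset$ for convex $C$, not of the theorem as stated.
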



Let $E$ be a Banach space and $C\subset E$. We call a mapping $T$: $C\to C$ a \emph{generalized hybrid mapping} \cite{K-T-Y} if there are numbers $\al, \beta \in \Rb$ such that
\[
\al\|Tx - Ty\|^2 + (1-\al)\|x - Ty\|^2 \leq \beta \|Tx - y\|^2 + (1 - \beta)\|x - y\|^2.
\]
for all $x,y \in C$.

When $(\al,\beta) = (1,0)$, this indeed defines a nonexpansive mapping. However,  the composite of two generalized hybrid mappings is usually no longer a generalized hybrid mapping, and a generalized hybrid mapping may be discontinuous. For a semigroup action generated by this type of mappings, the following results were proved in \cite{L-Z4}.

\begin{thm}\label{hybrid As}
Let $C\neq \emptyset$ be a  subset of a Hilbert space $H$ and $\Sc$ be a representation of a semitopological semigroup $S$ on $C$.  Suppose that $S$ is generated by a subset $\Lambda$ and, for each $s\in \Lambda$, $T_s$ is a generalized hybrid mapping on $C$, and suppose that $\{T_sc: s\in S\}$ is bounded for some $c\in C$. Then $A_C(\Sc) \neq \emptyset$ 
if any of the following conditions holds.
\begin{enumerate}
\item $C_b(S)$ has a left invariant mean and the mapping $s\mapsto T_sc$ is continuous from $S$ into $(C,\text{wk})$;
\item $S$ is left amenable and the action of $S$ on $C$ is weakly jointly continuous;
\item $AP(S)$ has a left invariant mean and the action of $S$ on $C$ is weakly separately continuous and weakly equicontinuous continuous;
\item $WAP(S)$ has a left invariant mean and the action of $S$ on $C$ is weakly separately continuous and weakly quasi-equicontinuous.
\end{enumerate}
If, in addition, $C$ is convex and closed, then $F(S) \neq \emptyset$. 
\end{thm}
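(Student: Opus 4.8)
The plan is to imitate the invariant-mean argument behind Theorem~\ref{nonempty As}, feeding the generalized hybrid inequality into the place where nonexpansiveness was used there, and then to read off the common fixed point from Lemma~\ref{As to Fs}.

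The first move is a reduction to the generating set: if $a\in H$ satisfies $\|a-T_tx\|\le\|a-x\|$ for all $x\in C$ and all $t\in\Lambda$, then for any $s=t_1\cdots t_k\in S$ with the $t_i\in\Lambda$ one has $T_sx=T_{t_1}(\cdots(T_{t_k}x))$, and iterating the estimate gives $\|a-T_sx\|\le\|a-x\|$; hence $a\in A_C(\Sc)$. So it suffices to produce one $a$ that works against every generator. To build $a$, I would fix in each of the four cases a left invariant mean $m$ on the appropriate translation-invariant subspace $X$ of $C_b(S)$, namely $C_b(S)$, $LUC(S)$, $AP(S)$, $WAP(S)$ in cases (1)--(4). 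The point of the various continuity, equicontinuity and quasi-equicontinuity hypotheses, together with boundedness of $\{T_sc:s\in S\}$, is exactly to guarantee that for every $w\in H$ the bounded function $s\mapsto\la T_sc,w\ra$ lies in $X$; then $w\mapsto m_s\la T_sc,w\ra$ is a bounded functional on $H$ and the Riesz representation theorem produces $a\in H$ with $m_s\la T_sc,w\ra=\la a,w\ra$ for every $w$. Since $s\mapsto ts$ is continuous and $X$ is left invariant, $s\mapsto\la T_{ts}c,w\ra$ is again in $X$ (a left translate), so left invariance of $m$ gives $m_s\la T_{ts}c,w\ra=\la a,w\ra$ too; in particular $m_s\la T_{ts}c-a,v\ra=m_s\la T_sc-a,v\ra=0$ for each fixed $v\in H$.

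Next, fix a generator $t\in\Lambda$ with $T_t$ generalized hybrid for constants $\al,\be$, and fix $x_0\in C$. Putting $x:=T_sc$ and $y:=x_0$ in the generalized hybrid inequality and using $T_t(T_sc)=T_{ts}c$ gives
\[
\al\|T_{ts}c-T_tx_0\|^2+(1-\al)\|T_sc-T_tx_0\|^2\le\be\|T_{ts}c-x_0\|^2+(1-\be)\|T_sc-x_0\|^2 .
\]
Expanding each squared norm about $a$ through $\|p-q\|^2=\|p-a\|^2+2\,\mathrm{Re}\la p-a,a-q\ra+\|a-q\|^2$ and collecting terms, the inequality becomes
\[
(\al-\be)\bigl(\psi(ts)-\psi(s)-2\,\mathrm{Re}\la T_{ts}c-T_sc,a\ra\bigr)+R(s)+\|a-T_tx_0\|^2-\|a-x_0\|^2\le0,
\]
where $\psi(s)=\|T_sc\|^2$ and $R(s)$ is a combination of inner products of the form $\la T_{ts}c-a,v\ra$ and $\la T_sc-a,v\ra$ with $v$ independent of $s$. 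Applying $m$ in the variable $s$: the term $m_sR(s)$ and the term $m_s\bigl(\mathrm{Re}\la T_{ts}c-T_sc,a\ra\bigr)$ both vanish by the orthogonality noted above, the two constants pass through $m$, and $m_s\bigl(\psi(ts)-\psi(s)\bigr)=0$ by left invariance. What survives is $\|a-T_tx_0\|^2\le\|a-x_0\|^2$, and since $x_0\in C$ and $t\in\Lambda$ were arbitrary the reduction of the first step gives $a\in A_C(\Sc)$. Finally, when $C$ is convex and closed, $H$ — a Hilbert space, hence strictly convex and reflexive — has a single-valued metric projection $P_C$, so Lemma~\ref{As to Fs}, whose proof uses only the attractive-point property and not nonexpansiveness of the $T_s$, shows that $P_C(a)$ is a common fixed point for $S$; thus $F(S)\ne\emptyset$.

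The step I expect to be the genuine obstacle is the identity $m_s\bigl(\psi(ts)-\psi(s)\bigr)=0$, equivalently the claim that the map $s\mapsto\psi(s)=\|T_sc\|^2$ — or at least its translate-increment $s\mapsto\psi(ts)-\psi(s)$ — belongs to the function space $X$ on which $m$ lives. In general $\psi$ is only weakly lower semicontinuous, so it need not be continuous and cannot be fed to $m$ termwise; making the identity precise is where the joint/weak continuity of $s\mapsto T_sc$ (cases (1),(2)) and the weak (quasi-)equicontinuity of the action (cases (3),(4)) have to be used, typically by exploiting relative compactness of the orbit maps on the weakly compact set $\overline{\{T_uc:u\in S\}}^{\,\mathrm{wk}}$ to place the relevant functions in $X$. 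Carrying this verification out in each of the four regularity regimes, in tandem with checking that the orbit functionals $s\mapsto\la T_sc,w\ra$ themselves lie in $X$, is the technical heart of the argument; everything else is the elementary Hilbert-space bookkeeping above.
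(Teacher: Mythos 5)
Your overall architecture is the right one and, as far as the survey indicates, it is the same route as the cited source \cite{L-Z4}: construct the mean vector $a$ with $\la a,w\ra=m_s\la T_sc,w\ra$, feed the pair $(T_sc,x_0)$ into the generalized hybrid inequality for a generator $T_t$, apply the left invariant mean in $s$, and finish with the metric projection via Lemma~\ref{As to Fs} (whose proof indeed uses nothing beyond the attractive-point inequality, so it applies here). The reduction to the generating set $\Lambda$ is a necessary and correctly executed step, since generalized hybrid maps are not closed under composition. The survey itself contains no proof of this theorem, so the comparison can only be against the standard argument it cites.

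The difficulty is that you have not actually proved the theorem: you have correctly located its only nontrivial analytic step and then set it aside as ``the technical heart of the argument'' without carrying it out. The identity $m_s\bigl(\psi(ts)-\psi(s)\bigr)=0$ with $\psi(s)=\|T_sc\|^2$ amounts to placing $\psi$ (equivalently, $s\mapsto\|T_sc-y\|^2$) in the domain $X$ of the left invariant mean, and this is genuinely delicate: in case (1) the hypothesis gives only weak continuity of $s\mapsto T_sc$, which yields lower semicontinuity of $\psi$, not membership in $C_b(S)$; in cases (3) and (4) one must in addition show that the left orbit of the relevant functions is relatively norm (resp.\ weakly) compact, and the equicontinuity/quasi-equicontinuity hypotheses enter precisely and only there. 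The same issue already arises in the nonexpansive Theorem~\ref{nonempty As}, so it is not an artifact of the hybrid condition, and it cannot be circumvented by positivity or monotonicity of $m$, because your inequality requires exact cancellation of the $\psi$-terms and the coefficient $\al-\be$ has no fixed sign. Until you either verify $\psi\in X$ in each of the four regimes or restructure the argument so that only functions of the form $s\mapsto\la T_sc,v\ra$ are fed to $m$, what you have is an accurate outline with its central lemma missing. The remaining bookkeeping --- the expansion of the squared norms about $a$, the vanishing of $m_sR(s)$ and of $m_s\,\mathrm{Re}\la T_{ts}c-T_sc,a\ra$, and the survival of $\|a-T_tx_0\|^2\le\|a-x_0\|^2$ --- is correct, granted that the orbit functionals and their left translates lie in $X$; that membership also deserves at least a sentence of verification in cases (2)--(4).
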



\section{subinvariant submeans}

The notion of submean was first studied by Mizoguchi and Takahashi in \cite{M-T}. Further investigations and applications can be seen in \cite{L-T 96, L-T 03}.

Given a set $S$, a nonempty subset $X$ of $\ell^\infty(S)$ is called \emph{positively semilinear} if $f, g \in X$ implies $\al f + \be g \in X$ for all $\al, \be \in [0,\infty)$.

Let $X$ be a positively semilinear subset of $\ell^\infty(S)$ containing positive  constants. A function $\mu$: $X \to \Rb$ is called a \emph{submean} on $X$ if it satisfies the following conditions.
\begin{enumerate}
\item If $f, g_1, g_2 \in X$ and $\al, \be \in [0, 1]$ such that $f \leq \al g_1 + \be g_2$, then 
\[
 \mu(f) \leq \al \mu(g_1) + \be \mu(g_2),
\]
\item For every constant  $c> 0$, $\mu(c) = c$.
\end{enumerate}

It is easily seen that a submean is always continuous when $X$ is equipped with the sup norm topology of $\ell^\infty(S)$.
A submean $\mu$ is also increasing, i.e. $\mu(f) \geq \mu(g)$ if $f,g\in X$ and $f\geq g$. We call the submean $\mu$ \emph{strictly increasing} if for each constant $c>0$ there is $\de(c) >0$ such that 
\[
\mu(f + c) \geq \mu(f) + \de(c)
\]
for all $f\in X$.

Now suppose further that $S$ is a  semigroup. 
A submean $\mu$ on a left invariant, positively semilinear subset $X$ of $\ell^\infty(S)$ containing positive constants is called \emph{left subinvariant} if 
\[ \mu(l_s f) \geq \mu(f)\quad (s\in S, f\in X).
\]
 If the equality $\mu(l_s f) = \mu(f)$ holds for all $s\in S$ and $f\in X$, then we call $\mu$ \emph{left invariant}.

Trivially, if $X$ is a left invariant subspace of $\ell^\infty(S)$ containing constants, then any left invariant mean on $X$ is a strictly increasing left invariant submean on $X$. Some nonlinear examples are given as follows. 

\begin{exa}\label{sup group}
Let $S=G$ be a group. Then 
$$\mu(f) = \sup_{g\in G}f(g)\quad (f\in \ell^\infty(G))$$
 is a strictly increasing left invariant submean on $\ell^\infty(G)$.
\end{exa}

\begin{exa}\label{sup S0}
 If there is a nonempty $S_0 \subset S$ such that $sS_0 \supset S_0$ for each $s\in S$, then
  $$\mu_0(f) = \sup_{s\in S_0}f(s)\quad (f\in \ell^\infty(S))$$
  defines a strictly increasing left subinvariant submean on $\ell^\infty(S)$. In particular, if $S$ has a right zero $s_0$ so that $ss_0 = s_0$ for all $s\in S$, then $\mu_0(f) = f(s_0)$ is a strictly increasing left invariant submean on $\ell^\infty(S)$. 
  More generally, if $S$ has a left ideal $S_0=G_0$ which is a group, then $\mu_0$ defined above is a strictly increasing left invariant submean on $\ell^\infty(S)$.
\end{exa}

\begin{exa}\label{submean left reversible}
Let $S$ be a left reversible semitopological semigroup and let $\Gamma$ be the collection of all closed right ideals of $S$. Given any submean $\nu$ on a left invariant, positively semilinear subset $X$ of 
$LUC(S)$ that contains positive constants, we define
\[
\mu(f) = \inf_{J\in \Gamma}\sup_{s\in J} \nu(l_sf) \quad (f\in X).
\]
Then $\mu$ is a left subinvariant submean on $X$. If $\nu$ is strictly increasing, then so is $\mu$. 

As a special case, for discrete $S$ we can take the submean $\nu$ on $\ell^\infty(S)$ defined by $\nu(f) = \sup_{s\in S} f(s)$
. Then $\sup_{s\in J} \nu(l_sf) = \sup_{s\in J} f(s)$, and so
\[
\mu(f) = \inf_{J\in \Gamma}\sup_{s\in J} f(s) \quad (f\in \ell^\infty(S))
\]
defines a strictly increasing left subinvariant submean on $\ell^\infty(S)$.
\end{exa}

A submean $\mu$ on $\ell^\infty(S)$ is called \emph{supremum admissible} if for any bounded family $\{f_\al: \al\in \Delta\}\subset \ell^\infty(S)$
\[
\mu(\sup_{\al\in \Delta}f_\al) = \sup_{\al\in \Delta}\mu(f_\al),
\]
where $\sup_{\al\in \Delta}f_\al\in\ell^\infty(S)$ is defined by $(\sup_{\al\in \Delta}f_\al)(s) = \sup_{\al\in \Delta}(f_\al(s))$ ($s\in S$).

The submeans defined in Examples \ref{sup group} and \ref{sup S0} are supremum admissible. If $S$ is a finite semigroup, then every submean on $\ell^\infty(S)$ is supremum admissible. 

The following was proved in \cite{L-Z5}.

\begin{thm}\label{supremum admissible fpp}
Let $S$ be a 
semigroup that acts on a weak* closed convex set $K \neq \emptyset$ of a dual Banach space $E=(E_*)^*$ as norm  nonexpansive self mappings. 
Suppose that $\ell^\infty(S)$ has a strictly increasing supremum admissible left subinvariant submean. If $K$ has 
normal structure and there is $c\in K$ such that $\{T_s c: s\in S\}$ is bounded, then $K$ has a common fixed point for $S$.
\end{thm}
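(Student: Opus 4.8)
The plan is to run the classical Kirk-type fixed point argument, organized around a minimal nonempty weak* compact convex $S$-invariant subset, but with the ordinary ``diameter'' replaced by a submean-weighted Chebyshev-type radius so that the strictly increasing, supremum admissible, left subinvariant submean can play the role that an invariant mean plays in the linear case. First I would record that the hypotheses produce a nonempty weak* compact convex $S$-invariant set to work in: since $\{T_sc : s\in S\}$ is bounded and $K$ is weak* closed and convex in the dual space $E=(E_*)^*$, the weak* closed convex hull $K_0 = \overline{co}^{\,\text{wk*}}\{T_sc : s\in S\}$ is weak* compact (bounded weak* closed convex sets are weak* compact by Banach--Alaoglu), convex, contained in $K$, and $S$-invariant because each $T_s$ is nonexpansive hence maps the generating set into itself and commutes with convex combinations in the weak* closure only in the affine case --- here instead I would use that $T_sK_0\subseteq K_0$ follows from $T_s$ being norm nonexpansive and weak*-to-weak* continuity on bounded sets is \emph{not} assumed, so I must be careful: the right move is to pass to a minimal $S$-invariant nonempty weak* compact convex subset $M$ of $K_0$ via Zorn's lemma, using weak* compactness to guarantee that the intersection of a decreasing chain of such sets is again nonempty, weak* compact, convex, and $S$-invariant.

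Next, on $M$ I would define, for $x\in M$, the function $s\mapsto \|T_sx - y\|$ and then the submean-radius $r(y) = \mu\bigl(s\mapsto \sup_{x\in M}\|T_sx - y\|\bigr)$ for $y\in M$, and the Chebyshev-type set $C = \{y\in M : r(y) = r_0\}$ where $r_0 = \inf_{y\in M} r(y)$; one checks $C$ is nonempty (weak* lower semicontinuity of $y\mapsto \|z-y\|$ for each fixed $z$, hence of the sup, hence --- using that $\mu$ is increasing and continuous for the sup norm, together with supremum admissibility to interchange $\mu$ with the sup over $x$ --- weak* lower semicontinuity of $r$, so it attains its infimum on the weak* compact $M$), convex (strict convexity is not available, but normal structure is what we need, not strict convexity, so convexity of $C$ follows from the convexity-type inequality in the definition of submean applied to $\|T_s(\cdot) - \frac{y_1+y_2}{2}\| \le \frac12\|T_s\cdot - y_1\| + \frac12\|T_s\cdot - y_2\|$), and $S$-invariant (here the left subinvariance $\mu(\ell_sf)\ge \mu(f)$ combined with $\ell_t$ acting as $s\mapsto T_{st}$ gives $r(T_ty) \le \mu(s\mapsto \sup_x\|T_{s}T_t x - y\|) = \mu(\ell_t(s\mapsto \sup_x\|T_sx-y\|)) \ge r(y)$ --- I will need to get this inequality pointing the correct way, which is the delicate bookkeeping step). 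By minimality of $M$ we then get $C = M$.

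The finish is the normal-structure contradiction: if $M$ has more than one point, normal structure (applied in the dual Banach space $E$, with its norm) gives a point $x_0\in co\, M$ and a radius strictly smaller than the diameter of $M$ such that $\sup_{x\in M}\|x - x_0\| < \operatorname{diam}(M)$; combining this with the fact that $\sup_{x\in M}\|T_sx - T_sx_0\|\le \sup_{x\in M}\|x-x_0\|$ and using $\mu(\text{constant}) = \text{constant}$ together with strict increase of $\mu$ to convert the strict inequality at the level of the bounding constant into a strict inequality $r(\text{something}) < r_0$, contradicting $C=M$ and the minimality of $r_0$; hence $M$ is a singleton $\{p\}$, and $S$-invariance of $M$ forces $T_sp = p$ for all $s$, i.e. $p$ is the desired common fixed point. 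I expect the main obstacle to be precisely the $S$-invariance of the Chebyshev set $C$: unlike the mean case, where invariance of the mean directly transfers, here one has only the one-sided inequality $\mu(\ell_sf)\ge\mu(f)$, so I must arrange the definition of $r$ (perhaps using $\inf$ over a suitable directed family, or the $\inf_J\sup_{s\in J}$-type construction of Example~\ref{submean left reversible}) so that subinvariance pushes $r$ in the direction needed to keep $C$ invariant, while still being compatible with weak* lower semicontinuity and with the normal-structure step. A secondary technical point is justifying the interchange of $\mu$ with the supremum over $x\in M$, which is exactly why the supremum admissibility hypothesis is imposed, and making sure the family $\{s\mapsto\|T_sx-y\|: x\in M\}$ is uniformly bounded in $\ell^\infty(S)$ (it is, since $M$ is bounded and the $T_s$ are nonexpansive with a bounded orbit).
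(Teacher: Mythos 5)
Your overall skeleton (minimal invariant set, submean-weighted asymptotic radius, Chebyshev-type set, normal-structure contradiction) is the right one, but three load-bearing steps are missing or wrong. First, your appeal to Zorn's lemma is vacuous as stated: you correctly observe that $\overline{co}^{\,\mathrm{wk*}}\{T_sc\}$ need not be $S$-invariant because no weak* continuity of the $T_s$ is assumed, but you never exhibit \emph{any} nonempty weak* compact convex $S$-invariant set to which Zorn can be applied. This is where the submean must enter first: a sublevel set such as $\{y\in K:\mu_s(\|T_sc-y\|)\le\mu_s(\|T_sc-c\|)\}$ is nonempty, bounded, convex, weak* closed (lower semicontinuity of $y\mapsto\mu_s(\|T_sc-y\|)$ already uses supremum admissibility to pass $\mu$ through a $\liminf$), and $S$-invariant by the computation below; only then does Zorn produce a minimal $M$. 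Second, your invariance computation for $C$ is not a proof: the chain $r(T_ty)\le\cdots=\cdots\ge r(y)$ concludes nothing, and the translate is misplaced (note $\ell_tf(s)=f(ts)$ corresponds to $T_{ts}=T_tT_s$, not $T_sT_t$). The correct bookkeeping is: with $g_y(s)=\sup_{x\in M}\|T_sx-y\|$ one has $\ell_tg_{T_ty}(s)=\sup_{x}\|T_t(T_sx)-T_ty\|\le g_y(s)$ by nonexpansiveness, whence $r(T_ty)=\mu(g_{T_ty})\le\mu(\ell_tg_{T_ty})\le\mu(g_y)=r(y)$, using left subinvariance for the first inequality and monotonicity for the second. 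This does work, but it is exactly the step you deferred.

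Third, and most seriously, the endgame is not there. Normal structure gives $x_0\in M$ with $R=\sup_{x\in M}\|x-x_0\|<d=\operatorname{diam}(M)$, and since $T_sx\in M$ this yields only $r_0\le\mu(R\cdot 1)=R<d$ --- an \emph{upper} bound on $r_0$, which by itself contradicts nothing. A point of $M$ with $r<r_0$ cannot exist by definition of the infimum, so ``converting the strict inequality into $r(\text{something})<r_0$'' is not a coherent target; what is needed is a matching lower bound, e.g.\ that $M$ is diametral or that $\operatorname{diam}(M)\le r_0$, and this is precisely where the strict increase and the supremum admissibility of $\mu$ must do real work (the naive estimate $\|T_sx-y\|+\|T_sx-z\|\ge\|y-z\|$ only gives $r_0\ge d/2$, which is compatible with $r_0<d$). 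As written, neither hypothesis is ever genuinely deployed: supremum admissibility appears only as a convenience for interchanging $\mu$ with a supremum, and strict increase is invoked with no mechanism attached. Until you supply the argument forcing $\operatorname{diam}(M)\le r_0$ (or a diametrality contradiction), the proof is incomplete at its decisive step.
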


For $S = G$ being a group, since $\ell^\infty(G)$ always has a strictly increasing left invariant submean (Example~\ref{sup group}) that is supremum admissible we immediately get the following result.

\begin{cor}\label{G fixed point}
Let $G$ be a 
group that acts on a weak* closed convex set $K \neq \emptyset$ of a dual Banach space $E=(E_*)^*$ as norm nonexpansive self mappings. 
If $K$ has 
normal structure and there is $c\in K$ such that $\{T_g c: g\in G\}$  is bounded, then $K$ has a common fixed point for $G$.
\end{cor}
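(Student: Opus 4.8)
The plan is to deduce this statement directly from Theorem~\ref{supremum admissible fpp} by supplying, in the special case $S=G$, the submean on $\ell^\infty(G)$ required there, namely the one from Example~\ref{sup group}.

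First I would recall the candidate submean $\mu(f)=\sup_{g\in G}f(g)$ for $f\in\ell^\infty(G)$. It is a submean: if $f\le \al g_1+\be g_2$ pointwise with $\al,\be\in[0,1]$, then taking suprema over $G$ gives $\mu(f)\le\sup_{g}(\al g_1(g)+\be g_2(g))\le\al\mu(g_1)+\be\mu(g_2)$, and $\mu(c)=c$ for every positive constant $c$. It is strictly increasing since $\mu(f+c)=\mu(f)+c$, so one may take $\de(c)=c$. Because $G$ is a group, for each $s\in G$ the left translation $g\mapsto sg$ is a bijection of $G$, whence $\mu(l_sf)=\sup_{g\in G}f(sg)=\sup_{g\in G}f(g)=\mu(f)$; thus $\mu$ is left invariant, in particular left subinvariant. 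All of this was already noted in the discussion preceding the corollary.

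Next I would verify that $\mu$ is supremum admissible, which is the only point needing checking. Let $\{f_\al:\al\in\Delta\}\subset\ell^\infty(G)$ be a bounded family, say $\|f_\al\|_\infty\le M$ for all $\al$. Then $h:=\sup_{\al}f_\al$, defined pointwise by $h(g)=\sup_{\al}f_\al(g)$, satisfies $|h(g)|\le M$, so $h\in\ell^\infty(G)$, and the elementary interchange of suprema
\[
\mu(h)=\sup_{g\in G}\sup_{\al\in\Delta}f_\al(g)=\sup_{\al\in\Delta}\sup_{g\in G}f_\al(g)=\sup_{\al\in\Delta}\mu(f_\al)
\]
is precisely supremum admissibility.

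Finally, with $S=G$ all hypotheses of Theorem~\ref{supremum admissible fpp} are in force: $\ell^\infty(G)$ carries the strictly increasing, supremum admissible, left (sub)invariant submean $\mu$ just described; $K$ is a nonempty weak* closed convex subset of the dual Banach space $E=(E_*)^*$ on which $G$ acts by norm nonexpansive self maps; $K$ has normal structure; and some orbit $\{T_gc:g\in G\}$ is bounded. Theorem~\ref{supremum admissible fpp} then produces a common fixed point for $G$ in $K$. There is essentially no obstacle in this argument — the whole content is the existence of the sup-submean together with its (routine) supremum admissibility, the fixed point conclusion being inherited wholesale from Theorem~\ref{supremum admissible fpp}.
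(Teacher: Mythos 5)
Your proposal is correct and follows exactly the paper's route: the corollary is deduced from Theorem~\ref{supremum admissible fpp} by invoking the submean $\mu(f)=\sup_{g\in G}f(g)$ of Example~\ref{sup group}, whose left invariance, strict increase, and supremum admissibility are precisely the facts the paper cites. Your explicit verification of these routine properties only fills in details the paper leaves implicit.
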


Regarding actions on a weak* compact convex subset of a dual Banach space, the following were obtained in \cite{L-Z5}. 

\begin{thm}\label{left reversible}

Let $S$ be a semitopological semigroup and $\Sc=\{T_s: s\in S\}$ be a 
norm nonexpansive representation of $S$ on a nonempty weak* compact convex subset $K$ of a dual Banach space $E=(E_*)^*$. Suppose that $K$ has the normal structure. Then $K$ contains a common fixed point for $S$ if one of the following conditions holds.
\begin{enumerate}
\item $S$ is left reversible and the representation is weak* continuous;
\item $AP(S)$ has a LIM and the representation is separately continuous and equicontinuous when $K$ is equipped with the weak* topology of $E$;
\item $LUC(S)$ has a LIM and the mapping $(s,x)\mapsto T_sx$ from $S\times K$ into $K$ is jointly continuous when $K$ is equipped with the weak* topology of $E$;
\item $RUC(S)$ has a LIM and the representation is separately continuous and equicontinuous when $K$ is equipped with the weak* topology of $E$.
\end{enumerate}

\end{thm}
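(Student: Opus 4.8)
The plan is to reduce, by Zorn's lemma, to a \emph{minimal} weak* compact convex $S$-invariant set, then to use the amenability hypothesis to manufacture an asymptotic‑center (or ``almost fixed'') structure inside it, and finally to contradict normal structure. Let $\Fc$ be the family of all nonempty weak* compact convex $S$-invariant subsets of $K$, ordered by inclusion; it contains $K$, and the intersection of any chain in $\Fc$ is again a member (a decreasing chain of nonempty weak* compact sets has the finite intersection property, and convexity and $S$-invariance pass to intersections), so Zorn's lemma yields a minimal $K_0\in\Fc$. It suffices to prove $\operatorname{diam}_{\|\cdot\|}K_0=0$, for then the point of $K_0$ is a common fixed point. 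Two facts about $K_0$ will be used: it inherits the normal structure of $K$ (a $\|\cdot\|$-bounded $H\subseteq K_0$ is bounded in $K$, and the center $x_0$ from the definition lies in $\operatorname{co}H\subseteq K_0$), and minimality forces $\overline{\operatorname{co}}^{\,\text{wk*}}\bigl(\bigcup_{s\in S}T_s(K_0)\bigr)=K_0$, since the left-hand set is weak* compact, convex and $S$-invariant (if a net of convex combinations $\sum_i\lambda_iT_{s_i}(k_i)$, $k_i\in K_0$, converges weak* to $w$, each combination lies in $K_0$, so $T_t$ of it lies in $T_t(K_0)$ and $T_tw$ is a weak* limit of such points).

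Now suppose $d:=\operatorname{diam}_{\|\cdot\|}K_0>0$, toward a contradiction. The core of the argument is a Karlovitz/Lim-type analysis: on a minimal $S$-invariant set of diameter $d$ one produces, from the amenability of $S$, a net $(x_\lambda)$ with $\|T_tx_\lambda-x_\lambda\|\to0$ for every $t\in S$, whence (in the weak* form due to Lim) $\sup_{z\in K_0}\|y-z\|=d$ for every $y\in K_0$, i.e.\ $K_0$ is \emph{diametral}; since $K_0$ has normal structure this forces $d=0$, a contradiction. The amenability hypothesis is used precisely to make this work in each case. In case~(1), left reversibility turns $S$ into a directed set via $a\preceq b\iff b\in\overline{aS}$ (directed because each $\overline{aS}$ is a closed right ideal and, by reversibility, any two such ideals meet), and the asymptotic center of an orbit net $(T_sc)_{s\in S}$ is $S$-invariant by nonexpansiveness, hence equals $K_0$ by minimality; this is exactly Lim's fixed point theorem in conjugate Banach spaces. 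In cases~(2)--(4), the stated continuity lets the \emph{scalar} orbit functions $s\mapsto\la T_sc,\phi\ra$ ($\phi\in E_*$) lie in $AP(S)$, $LUC(S)$, resp.\ $RUC(S)$ (an Ascoli-type argument, using equicontinuity in (2) and (4) and joint continuity in (3)), so the action on $(K_0,\text{wk*})$ extends to a weak* continuous nonexpansive action of the corresponding semigroup compactification of $S$; that compactification is left reversible---a left amenable compact (right topological) semigroup has a unique minimal right ideal, and every closed right ideal contains it---so cases~(2)--(4) reduce to case~(1), or else one argues directly by averaging the orbit data against the invariant mean.

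The step I expect to be the main obstacle is the weak* version of the ``almost fixed net $+$ Karlovitz lemma'' package on the minimal set, together with the non-emptiness and $S$-invariance of the asymptotic centers involved. The delicacy is that on $K_0$ the norm is only weak* lower semicontinuous, not weak* continuous, so one cannot freely take weak* limits or averages of the quantities $\|\,\cdot\,-T_sc\|$; this is exactly what dictates the precise form of the continuity hypotheses in (2)--(4) (calibrated so that the weak* continuous scalar orbit functions stay in the relevant amenable algebra, and, in the equicontinuous cases, so the averaging map stays weak* continuous), and also why one works with asymptotic centers of orbit nets rather than ordinary Chebyshev centers---the latter are $S$-invariant for a single nonexpansive map but not for a semigroup. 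A secondary point is the verification that the compactification used in (2)--(4) is genuinely left reversible and that the action extends to it with enough continuity to invoke case~(1); granting these, the contradiction with normal structure is immediate.
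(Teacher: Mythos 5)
Your skeleton --- Zorn's lemma to get a minimal weak* compact convex $S$-invariant $K_0$, the identity $K_0=\overline{co}^{\,\text{wk*}}\bigl(\bigcup_{s\in S}T_s(K_0)\bigr)$, and a contradiction with normal structure --- is the right architecture, and your verification of that identity is fine. But the central step is missing, and you have essentially said so yourself. The chain ``amenability/reversibility $\Rightarrow$ a common approximate fixed point net $\Rightarrow$ (weak* Karlovitz) $K_0$ is diametral'' consists of two unproved assertions, each of roughly the same depth as the theorem: the existence of a \emph{common} approximate fixed point net for a left reversible or amenable semigroup of nonexpansive maps on a bounded set is itself a nontrivial theorem, and the Goebel--Karlovitz lemma has no off-the-shelf version for semigroup actions on weak* compact minimal sets. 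More to the point, that package is the tool for spaces \emph{without} normal structure; when normal structure is assumed, the argument the source paper runs (and which Section 6 of this survey is visibly building toward, via strictly increasing left subinvariant submeans, Examples 6.1--6.3 and Theorem 6.3) is to minimize an averaged radius $r(y)=\mu_s\bigl(\|T_sc-y\|\bigr)$ over $K_0$, where $\mu$ is a left subinvariant submean (for case (1), $\mu(f)=\inf_{J}\sup_{s\in J}f(s)$ over closed right ideals $J$; for (2)--(4), a LIM on the relevant algebra): left subinvariance plus nonexpansiveness gives $r(T_ty)\le r(y)$, so the set of minimizers is a nonempty weak* compact convex $S$-invariant subset, and strict increasingness plus normal structure makes it \emph{proper}, contradicting minimality. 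Your Chebyshev-center remark cannot substitute for this, because for a semigroup one only controls $\sup_{z}\|T_ty-z\|$ over $z\in \overline{co}^{\,\text{wk*}}(T_t K_0)$, not over all of $K_0$; this is precisely why the averaged/asymptotic radius, not the Chebyshev radius, must be used.

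Two further concrete gaps. First, the functions that must lie in the domain of the mean are the norm functions $f_y(s)=\|T_sc-y\|$, not the coefficient functions $s\mapsto\langle T_sc,\phi\rangle$ that you check: since the norm is only weak* lower semicontinuous, $f_y$ need not even belong to $C_b(S)$ under the stated continuity hypotheses, and showing that the averaging can nevertheless be carried out (or that $f_y$ lands in $AP(S)$, $LUC(S)$, resp.\ $RUC(S)$ under equicontinuity/joint continuity) is exactly the calibration issue you flag as ``the main obstacle'' without resolving it. Second, the proposed reduction of (2)--(4) to (1) via compactifications does not go through as stated: the $LUC$- and $RUC$-compactifications are only right (resp.\ left) topological, their left reversibility is asserted rather than proved, and the extended maps $T_m$ for $m$ in the compactification are pointwise weak* limits of the $T_s$ and hence generally \emph{not} weak* continuous, so the extended action fails the hypothesis of case (1). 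As it stands the proposal is a plausible outline with the genuinely hard steps deferred, and at least one of its two proposed engines (the Karlovitz route) is not the one that works here.
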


\section{Some open problems}

We conclude this paper with several open questions as follows.

\begin{problem}\label{Q F*}
Does a semitopological semigroup $S$ have the fpp ($F_*$) if $LUC(S)$ has a LIM?
\end{problem}
The question is open even for discrete case \cite{Lau.survey}. It was shown in \cite[Proposition~6.1]{L-Z2} that a weak version of property ($F_*$) holds if $LUC(S)$ has a LIM.

Related to Problem~\ref{Q F*}, the following extension of \cite[Theorem~5.3]{L-T1} were proved in \cite{L-Z2}.

\begin{thm}
If $S$ is a left reversible or left amenable semitopological semigroup, then the following fixed point property holds.
\begin{description}
\item[($F_{*s}$)]  Whenever $\Sc = \{T_s:\; s\in S\}$ is a norm nonexpansive representation of $S$ on a nonempty norm separable weak* compact convex set $C$ of the dual space of a Banach space $E$ and the mapping $(s,x)\mapsto T_s(x)$ from $S\times C$ to $C$ is jointly continuous when $C$ is endowed with the weak* topology of $E^*$, then there is a common fixed point for $S$ in $C$.
\end{description}
\end{thm}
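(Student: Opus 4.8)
The plan is to run the scheme of \cite[Theorem~5.3]{L-T1} in parallel for the two hypotheses. The left reversible case is essentially that theorem, so the new content is the left amenable case, where the idea is to let a left invariant mean take over the structural role played by the downward directed family of closed right ideals under left reversibility. First I would apply Zorn's lemma -- legitimate because $C$ is weak* compact, so in the poset of nonempty weak* compact convex $\Sc$-invariant subsets of $C$ ordered by reverse inclusion every chain has an upper bound, namely its intersection, which is nonempty by the finite intersection property -- to fix a minimal such set $K$; it then suffices to prove $K$ is a singleton. If $K$ were not a singleton it could not be pointwise fixed (a fixed singleton would violate minimality), so there are $x_{0}\in K$ and $t_{0}\in S$ with $T_{t_{0}}x_{0}\neq x_{0}$; set $\delta=\operatorname{diam}K>0$.

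The separability hypothesis is used to make $(C,\mathrm{wk}^{*})$, hence $K$, metrizable; together with the joint weak* continuity this is what replaces the normal structure assumption present in results such as Theorem~\ref{left reversible}, and it lets the argument proceed sequentially. The goal is to produce a nonempty, weak* closed, convex, proper, $\Sc$-invariant subset of $K$, contradicting minimality. The candidate is a \emph{center} set attached to the orbit of $x_{0}$: fixing a countable subset $\{\varphi_{k}\}$ of the unit ball of $E$ norming $\overline{C-C}$ (available since $C$ is norm separable) and a left invariant mean $m$, set $r(u)=m\bigl(s\mapsto\|T_{s}x_{0}-u\|\bigr)$, computed through $\{\varphi_{k}\}$, and let $Z=\{u\in K:\ r(u)=\inf_{v\in K}r(v)\}$. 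For discrete $S$ one takes $m$ a LIM on $\ell^{\infty}(S)$; for non-discrete left amenable $S$ one takes the LIM on $LUC(S)$ and works with the scalar orbit functions $s\mapsto\langle T_{s}x,\varphi_{k}\rangle$, which lie in $LUC(S)$ by joint continuity and weak* compactness of orbit closures, or one first passes via Theorem~\ref{Thm Sr convex} to the induced affine action on the probability measures over $C$.

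Next I would check that $Z$ is weak* closed and convex and $\Sc$-invariant: for $a\in S$ and $u\in K$, nonexpansiveness of $T_{a}$ gives $\|T_{as}x_{0}-T_{a}u\|\le\|T_{s}x_{0}-u\|$ for all $s$, whence, by left invariance of $m$, $r(T_{a}u)=m\bigl(s\mapsto\|T_{s}x_{0}-T_{a}u\|\bigr)=m\bigl(s\mapsto\|T_{as}x_{0}-T_{a}u\|\bigr)\le r(u)$, so $T_{a}$ maps the minimizer set into itself. The non-trivial points are that $Z$ is nonempty and \emph{proper}. Nonemptiness follows once $r$ is shown weak* lower semicontinuous on $K$ (then it attains its infimum on the weak* compact $K$); here the countable norming family and the metrizability of $K$ are used to control the interaction of $m$ with the supremum defining the norm. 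Properness is the crux: absent normal structure one cannot conclude $Z\subsetneq K$ for free, so one argues by contradiction -- if $Z=K$ then $r$ is constant on $K$, a strong ``diametral'' condition which, combined with the genuine motion of the orbit of $x_{0}$, the metrizability, and the joint continuity, is incompatible with $\delta>0$; this is exactly the step \cite{L-T1} carries out via the closed-right-ideal computation, and the adaptation amounts to transcribing it with the invariant-mean averages in place of the ideal limits. Either way one is led to a point of $K$ fixed by all of $S$, i.e.\ $K$ is a singleton.

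I expect the obstacles to be concentrated in two places. First, making the center construction rigorous: the weak* lower semicontinuity of $r$ needed for nonemptiness is the technical heart, and for non-discrete left amenable $S$ one must be careful because the functions $s\mapsto\|T_{s}x-u\|$ need not even be continuous on $S$, so the LIM must be installed on a space that contains them or suitable surrogates -- and, unlike the discrete situation, left amenability does not hand us left reversibility to fall back on. Second, the properness step: transporting from \cite{L-T1} the argument that a non-singleton minimal set cannot support a constant asymptotic radius is the genuinely substantive part, and checking that it survives the replacement of the right-ideal mechanism by an invariant mean is where the real care is required.
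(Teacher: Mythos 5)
Your architecture (minimal weak* compact convex invariant set via Zorn, an asymptotic-center functional attached to an orbit, norm separability used to metrize $(K,\mathrm{wk}^*)$ through a countable norming family) is the right one and matches the scheme of \cite{L-T1} that \cite{L-Z2} extends; the left reversible case is indeed essentially \cite[Theorem~5.3]{L-T1}. But the two steps you yourself flag as ``adaptations to be transcribed'' are precisely where the proof lives, and as written both fail. First, the domain problem for the mean is not a deferrable technicality: for non-discrete left amenable $S$ the hypothesis is only that $LUC(S)$ has a LIM, and $s\mapsto\|T_sx_0-u\|$ is merely weak* lower semicontinuous in $s$ (the action is jointly continuous only for the weak* topology on $C$), so it need not lie in $C_b(S)$, let alone $LUC(S)$. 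Neither of your two fixes closes this gap: a countable supremum of functions in $LUC(S)$ need not remain in $LUC(S)$, and a general LIM does not commute with countable suprema --- this is exactly why the paper must introduce ``supremum admissible'' submeans in Section 6 --- while the detour through Theorem~\ref{Thm Sr convex} produces an invariant probability measure on $C$ whose barycenter is fixed only for \emph{affine} actions, which yours are not. Note that for discrete $S$ left amenability already implies left reversibility, so the non-discrete situation is the only new content of the left amenable alternative, and it is exactly the case your construction does not reach.

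Second, both the nonemptiness and the properness of $Z$ are unproved. The functional $r$ is convex and norm-Lipschitz, hence weakly l.s.c., but not weak* l.s.c. in general: norm-closed convex subsets of a norm-separable weak* compact convex set need not be weak* closed (consider $\overline{co}^{\,\|\cdot\|}\{e_n\}$ inside the unit ball of $\ell^1=c_0^*$, which omits its weak* limit point $0$), so the infimum of $r$ over the weak* compact $K$ need not be attained. The reversible-case argument avoids this by intersecting the directed family of weak* closed sublevel sets of the tail suprema, and an analogous device (e.g.\ a norm-asymptotically invariant net of finite means in place of $m$) is needed here. More seriously, the assertion that $r\equiv\rho$ on a non-singleton $K$ is ``incompatible with $\delta>0$'' is exactly the asymptotic-normal-structure statement that norm separability plus weak* compactness must be shown to deliver; it is false for general weakly or weak* compact convex sets --- minimal invariant sets in Alspach's example are diametral and carry a constant asymptotic radius --- so it cannot be waved through as a transcription of the right-ideal computation. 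Until these two points are supplied, the proposal is an outline of the known strategy rather than a proof.
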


\begin{problem}\label{P_F*s}
Let $S$ be a (discrete) semigroup. If the fpp ($F_{*s}$) holds, does $WAP(S)$ have a LIM? We also do not know whether the existence of a LIM on $WAP(S)$ is sufficient to ensure the fpp ($F_{*s}$).
\end{problem}
A partial affirmative answer to Problem~\ref{P_F*s} was given in \cite[Proposition~6.5]{L-Z2}, which we quote as follows.

\begin{prop}\label{FsLIM} Suppose that $S$ has the fixed point property $(F_{*s})$. Then
\begin{enumerate}
\item $AP(S)$ has a LIM;\label{AP}
\item $WAP(S)$ has a LIM if $S$ has a countable left ideal.\label{WAP}
\end{enumerate}
\end{prop}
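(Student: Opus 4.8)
The plan is to derive both conclusions from the hypothesis that $S$ has the fixed point property $(F_{*s})$ by manufacturing, for each target space of functions, a concrete weak* compact convex set on which the natural translation representation of $S$ acts, checking that the action is norm nonexpansive and jointly continuous in the weak* topology, and then identifying a common fixed point with a left invariant mean. For part (1), the standing model is the set $C = AP(S)^*_1$ of means on $AP(S)$, viewed as a weak* compact convex subset of $AP(S)^* = (AP(S))^*$; here $E = AP(S)$ plays the role of the predual. The dual of left translation, $\ell_s^*$, maps means to means, is affine, and is norm nonexpansive since $\|\ell_s\| \le 1$ on $AP(S)$. The point is that on $AP(S)$ the orbit maps $s \mapsto \ell_s f$ are norm continuous (indeed norm relatively compact), which forces the representation $(s,x) \mapsto \ell_s^* x$ to be jointly continuous for the weak* topology on $C$: if $s_\alpha \to s$ and $x_\alpha \to x$ weak*, then for each $f \in AP(S)$, $\langle \ell_{s_\alpha}^* x_\alpha, f\rangle = \langle x_\alpha, \ell_{s_\alpha} f\rangle \to \langle x, \ell_s f\rangle$ because $\ell_{s_\alpha} f \to \ell_s f$ in norm while $x_\alpha \to x$ weak* and $(x_\alpha)$ is bounded. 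Then $(F_{*s})$ — note $C$ is weak* compact, hence a fortiori we must arrange norm separability, which is where the hypotheses of the proposition matter — yields a fixed mean, i.e.\ a LIM on $AP(S)$.

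The separability issue is the first real obstacle and the reason the two parts are stated differently. The set of means on $AP(S)$ need not be norm separable in general, so one cannot invoke $(F_{*s})$ directly on all of it. The standard remedy (as in the proof of Corollary 5.5 of \cite{L-T1} and the related arguments in \cite{L-Z2}) is a reduction: pick a countable subsemigroup or a countably generated subalgebra, find a fixed mean there, and patch these together using a compactness/net argument over the directed family of countable subsets. For part (1), since $AP(S)$ has the property that orbits are norm precompact, one works with a single function $f$ at a time, or with the separable subalgebra generated by countably many translates, produces an invariant mean on that piece, and then takes a weak* cluster point over the net of such subalgebras to obtain a genuine LIM on $AP(S)$ — the weak* compactness of the full mean set makes the cluster point exist, and invariance passes to the limit. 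This net-theoretic bookkeeping, rather than any single hard inequality, is the technical heart.

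For part (2), the target is a LIM on $WAP(S)$, and here one uses the extra hypothesis that $S$ has a countable left ideal $I$. The idea is to replace $WAP(S)$ by its restriction to $I$: restriction gives an $S$-equivariant map, and because $I$ is a left ideal the left translations by elements of $S$ map $I$ into $I$, so left translation is well-defined on functions on $I$. Since $I$ is countable, $\ell^\infty(I)$ is weak* separable as the dual of $\ell^1(I)$, and the relevant set of means on the (closure of the) image of $WAP(S)$ in $\ell^\infty(I)$ is a norm separable weak* compact convex set — here $E = \ell^1(I)$ or an appropriate separable predual. One checks the translation representation on this set is norm nonexpansive and, crucially, jointly weak* continuous; the weak almost periodicity of the original functions is what guarantees that orbit maps behave well enough (via the Grothendieck double-limit criterion, weak compactness of $\mathcal{LO}(f)$ lets one upgrade separate continuity to the joint continuity needed). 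Applying $(F_{*s})$ produces a left invariant mean on the restricted functions, and one pulls it back along restriction to get a LIM on $WAP(S)$. The hard part will be verifying the joint weak* continuity of the representation on the separable mean set in the $WAP$ case, since unlike the $AP$ case the orbit maps are only weakly, not norm, continuous — this is exactly where one must exploit that $\mathcal{RO}(f)$ (equivalently $\mathcal{LO}(f)$) is weakly relatively compact together with the joint-continuity hypothesis built into $(F_{*s})$, rather than proving joint continuity from separate continuity unconditionally.
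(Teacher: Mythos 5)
Your overall frame --- realize a LIM as a common fixed point of the dual translation action on a set of means and invoke $(F_{*s})$ --- is the natural one, and your verification that $\ell_s^*$ is affine, norm nonexpansive and jointly weak* continuous on the means of $AP(S)$ (via norm continuity of $s\mapsto \ell_s f$ for $f\in AP(S)$) is correct. But the step you yourself call the technical heart, namely manufacturing a \emph{norm separable} weak* compact convex set to which $(F_{*s})$ applies, is exactly where the argument breaks, in both parts. Passing to a countably generated left invariant $C^*$-subalgebra $X$ of $AP(S)$ does not make the set of means on $X$ norm separable: the means on a separable commutative unital $C^*$-algebra are the probability measures on its compact metrizable spectrum, and whenever that spectrum is uncountable the point masses form an uncountable family at mutual total-variation distance $2$. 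So $(F_{*s})$ still cannot be applied to ``the set of means on that piece,'' and the weak* cluster-point patching has nothing to patch. The same objection defeats your part (2): the set of means built over a countably infinite left ideal $I$ is a copy of the probability measures on the spectrum of $\ell^\infty(I)$, which is not norm separable; moreover your predual bookkeeping is off, since means on (the image of) $WAP(S)$ in $\ell^\infty(I)$ live in the dual of that function space, not in $\ell^\infty(I)=\ell^1(I)^*$.

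The separability must be found on the function side, not the measure side (the survey only quotes this result from \cite[Proposition~6.5]{L-Z2}; what follows is the argument there). For (1), one applies $(F_{*s})$ for each $f\in AP(S)$ to $K_f=\overline{co}\,\mathcal{R}\mathcal{O}(f)$ (norm closure in $C_b(S)$), which is norm compact by almost periodicity, hence norm separable, and becomes a weak* compact convex subset of the dual space $C_b(S)^{**}=(C_b(S)^*)^*$ on which the weak* and norm topologies agree; the right translations give a representation ($r_sr_t=r_{st}$) that is nonexpansive and jointly continuous there, producing $h\in K_f$ with $r_sh=h$. For (2), the countable left ideal $I$ enters precisely to replace $\mathcal{R}\mathcal{O}(f)$ by the countable orbit $\{r_af:\, a\in I\}$, invariant under each $r_s$ because $sa\in I$; its closed convex hull is weakly compact by weak almost periodicity and norm separable by Mazur's theorem applied to countably many generators, and weak compactness in $C_b(S)$ is exactly weak* compactness in the bidual. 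In both cases one must still convert the resulting right-translation-invariant functions $h_f$ (satisfying $h_f(us)=h_f(u)$) into a single left invariant mean on the whole space --- a standard but genuine step that your write-up does not address at all.
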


Consider partially bicyclic semigroups $S_2 = \la e, a, b, c \;|\; ab = e, ac = e\ra$ and $S_{1,1}=\la e, a, b, c, d \;|\; ab = e, cd = e\ra$. We know that they are not left amenable. So they do not have the fpp ($F_*$). It is worth mentioning that $WAP(S_2)$ and $AP(S_{1,1})$ both have a LIM (Theorem~\ref{bicyclic}).

\begin{problem}
Does the partially bicyclic semigroup $S_2$ have the fpp $(F_{*s})$?
\end{problem}

If the answer to the above question is yes, then, due to Theorem~\ref{bicyclic}, $S$ having $(F_{*s})$ is not equivalent to $S$ being left reversible; if the answer is no, then the converse of Proposition~\ref{FsLIM}(\ref{WAP}) does not hold even for a countable semigroup $S$.

Theorem~\ref{WAP MLIM} improves \cite[Theorem~3.8]{L-Z1} by removing the separability condition on $S$ assumed there. We wonder whether the same thing can be done to Theorem~\ref{F}. 

\begin{problem}
Does fpp (F) hold for a semitopological semigroup $S$ if $WAP(S)$ has a LIM? 
\end{problem}

If $S$ is a locally compact group, then $S$ is extremely left amenable only when $S$ is a singleton \cite{G-L70}. However, a non-trivial topological group which is not locally compact can be extremely left amenable. In fact, let $S$ be the group of unitary operators on an infinite dimensional Hilbert space with the strong operator topology, then $S$ is extremely left amenable \cite{G-M}; Theorem~\ref{ELA} shows that an F-algebra $A$ is left amenable if and only if the semigroup of normal positive functions of norm 1 on $A^*$ is extremely left amenable. For more examples we refer to \cite{Lau-Ludwig}.

\begin{problem}
Suppose that $S$ is extremely left amenable and $C$ is a weakly closed subset of a Banach space $E$, and suppose that $\Sc$ is a weakly continuous and norm nonexpansive representation of $S$ on $C$ such that $ \{T_sc: s\in S\}$ is relatively weakly compact for some $c\in C$. Does $C$ contain a fixed point for $S$? \label{Prob extreme}
\end{problem}
We know that the answer is ``yes'' when $S$ is discrete. Indeed, in this case, for each finite subset $\sigma$ of $S$ there is $s_\sigma \in S$ such that $ss_\sigma =s_\sigma$ for all $s\in \sigma$ by a theorem of Granirer's \cite{Gran_ELA} (see also \cite[Theorem~4.2]{L-Z2} for a short proof). Consider the net $\{s_\sigma c\}$. By the relative weak compactness of $Sc$, there is $z\in \overline{Sc}^\text{ wk}\subset C$ such that (go to a subnet if necessary) wk-$\lim_{\sigma}s_\sigma c = z$. Then, as readily checked, $T_sz = z$ for all $s\in S$ by the weak continuity of the $S$ action on $C$.

More generally, the answer to Problem~\ref{Prob extreme} is still affirmative (even without the norm nonexpansiveness assumption) if the representation is jointly continuous when $C$ is equipped with the weak topology of $E$. This is indeed a consequence of Theorem~\ref{Thm Sr}(a).

\begin{problem}
 Let $C$ be a nonempty closed convex subset of the sequence space $c_0$ and $\Sc$ be a representation of a commutative semigroup $S$ as nonexpansive mappings on $C$. Suppose that $\{T_sc: s\in S\}$ is relatively weakly compact for some $c\in C$. Is $F(\Sc) \neq \emptyset$?\label{Prob c0}
\end{problem}
One may not drop the relative weak compactness condition on the orbit of $c$. For example, on the unit ball of $c_0$ define $T((x_i)) = (1,x_1, x_2, \cdots)$. Then $T$ is nonexpansive, and obviously $T$ has no fixed point in the unit ball.

\begin{problem}
Let $A$ be an F-algebra. Let ($F'_n$) denote the same property as ($F_n$) with ``jointly continuous'' replaced by ``separately continuous'' on compact subsets of $E$. Does ($F'_n$) imply ($F_n$)?
\end{problem}

Regard the F-algebra $A$ as the Banach $A$-bimodule with the module multiplications given by the product of $A$. Then the dual space $A^*$ is a Banach $A$-bimodule. 
We say that a subspace $X$ of $A^*$ is topologically left (resp. right) invariant if $a\cdot X \subset X$ (resp. $X\cdot a\subset X$) for each $a\in A$; We call $X$ topologically invariant if it is both left and right topological invariant. An element of $A^*$ is almost periodic (resp. weakly almost periodic) if the map $a\mapsto f\cdot a$ from $A$ into $A^*$ is a compact (resp. weakly compact) operator. Let $AP(A)$ and $WAP(A)$ denote the collection of almost periodic and weakly almost periodic functions on $A$ respectively. Then $AP(A)$ and $WAP(A)$ are closed topologically invariant subspaces of $A^*$. Furthermore, $1\in AP(A) \subset WAP(A)$. When $G$ is a locally compact group and $A= L^1(G)$, then $AP(A)= AP(G)$ and $WAP(A)= WAP(G)$.

Let ($F_n^A$) denote the same property as ($F_n$) with joint continuity replaced by  equicontinuity on compact subsets of $E$. It is known that if $A$ satisfies ($F_n^A$) then $AP(A)$ has a TLIM 
(see \cite{Lau_AG}). 

\begin{problem}
Does the existence of TLIM on $AP(A)$ imply ($F_n^A$) for all $n\geq 1$?
\end{problem}

Let ($F_n^W$) denote the same property  ($F_n^A$) with equicontinuity on compact subsets of $E$ replaced by quasi-equicontinuity on compact subsets of $E$ (which means the closure of $S$ in the product space $E^K$, for each compact set $K\subset E$, consists only of continuous maps from $K$ to $E$). 
We have known that if $A$ satisfies ($F_n^W$) for each $n\geq 1$ then $WAP(A)$ has a TLIM (see \cite{Lau_AG}).   

\begin{problem}
Does the existence of TLIM on $WAP(A)$ imply ($F_n^W$) for all $n\geq 1$?
\end{problem}

\begin{problem}
Can separability of the semitopological semigroup in Theorem 5.5 be dropped?
\end{problem}


\begin{thebibliography}{99}

\bibitem{A}
D. Alspach,
A fixed point free nonexpansive map,
{ Proc. Amer. Math. Soc.} \textbf{82} (1981), 423-424.

\bibitem{A-T}
S. Atsushiba and W. Takahashi,
{Nonlinear ergodic theorems without convexity for nonexpansive semigroups in Hilbert spaces}, J. Nonlinear Convex Analysis, \textbf{14} (2013), 209-219.

\bibitem{BGM}
U. Bader, T. Gelander and N. Monod,
{A fixed point theorem for $L^1$ spaces}, Invent. Math. \textbf{189} (2012), 143-148.

\bibitem{B-T}
E. B\'edos and  L. Tuset, Amenability and co-amenability for locally compact quantum groups, Internat. J. Math. \textbf{14} (2003), 865-884. 


\bibitem{BP}
T. D. Benavides and M. A. J. Pineda,
Fixed points of nonexpansive mappings in spaces of continuous functions, {Proc. Amer. Math. Soc.} \textbf{133} (2005), 3037-3046.

\bibitem{BPP}
T. D. Benavides, M. A. J. Pineda and S. Prus,
Weak compactness and fixed point property for affine mappings,
{ J Funct. Anal.} \textbf{209} (2004), 1-15.




\bibitem{Browder}
F. E. Browder,
Non-expansive nonlinear operators in Banach spaces, {Proc. Nat. Acad. Sci. U. S. A.}, \textbf{54} (1965), 1041-1044.

\bibitem{Bruck}
R. E. Bruck,
On the center of a convex set, {Dokl. Akad. Nauk SSSR (N. S.)}, \textbf{59} (1948), 837-840.


\bibitem{B-L}
A.V. Buhvalov, G.Ja. Lozanovskii,
On sets closed in measure measure in spaces of measurable functions,
{ Trans. Moscow Math. Soc.} \textbf{2} (1978), 127-148. 





\bibitem{D-L-S}
H. G. Dales, A. T.-M. Lau and D. Strauss,
{Second duals of measure algebras}, Dissertations Math 481, 2012.  


\bibitem{D-N-N}
S. Desaulniers, R. Nasr-Isfahani and M. Nemati, Common fixed point properties and amenability of a class of Banach algebras, J. Math. Anal. Appl. \textbf{402} (2013), 536-544.

\bibitem{D-D-S-T}
P.G. Dodds, T.K. Dodds, F.A. Sukochev and O.Ye. Tikhonov, 
A non-commutative Yosida-Hewitt theorem and convex sets of measurable operators closed locally in measure,
{ Positivity} \textbf{9} (2005), 457-484. 

\bibitem{DLT}
P. N. Dowling, C. J. Lennard and B. Turett, 
Weak compactness is equivalent to the fixed point property in $c\sb 0$, {Proc. Amer. Math. Soc.} \textbf{132} (2004), 1659-1666

\bibitem{D-N}
  J. Duncan and I. Namioka, 
 Amenability of inverse semigroups and their semigroup algebras,
{ Proc. Roy. Soc. Edinburgh Sect A} \textbf{80} (1978), 309-321.

\bibitem{D-P}
  J. Duncan and A. L. T. Paterson, 
 Amenability for discrete convolution semigroup algebras,
{ Math. Scand.} \textbf{66} (1990), 141--146.


\bibitem{Eymard}
P. Eymard,  Sur les applications qui laissent stable l'ensemble des fonctions presque-periodiques, Bull. Soc. Math. France \textbf{89} (1961) 207-222.

\bibitem{K3}
K. Fan, Invariant subspaces for a semigroup of linear operators, Nederl. Akad. Wetensch. Proc. Ser. A 68, Indag. Math. \textbf{27} (1965), 447-451.

\bibitem{K1}
K. Fan, Invariant cross-sections and invariant linear subspaces, Israel J. Math. \textbf{2} (1964), 19-26.

\bibitem{K2}
K. Fan, Invariant subspaces of certain linear operators, Bull. Amer. Math. Soc. \textbf{69} (1963), 773-777.

\bibitem{G-K}
K. Goebel and W. A. Kirk,
Topics in metric fixed point theory, {Cambridge Studies in Advanced Mathematics 28}, 1990. 

\bibitem{G-K2}
K. Goebel and W. A. Kirk,
Classical theory of nonexpansive mappings, {Handbook of metric fixed point theory}, Kluwer Acad. Publ., Dordrecht, 2001, 49--91.

\bibitem{Gran_ELA}
E. Granirer, Extremely amenable semigroups, Math. Scand. \textbf{17} (1965), 177-197.

\bibitem{G-L70}
E. Granirer and A. T.-M. Lau, Invariant means on locally compact groups,
Illinois J. Math.  \textbf{15} (1971) 249-257

\bibitem{greenleaf}
F. P. Greenleaf, 
 Invariant means on topological groups and their applications. {Van Nostrand Mathematical Studies, No. 16} Van Nostrand Reinhold Co., New York-Toronto, Ont.-London, 1969.

\bibitem{G-M}
M. Gromov and V. D. Milman, A topological application of the isoperimetric inequality. Amer. J. Math.  \textbf{105}  (1983), 843-854.
 
 \bibitem{HWW}
P. Harmand, D. Werner and W. Werner,
M-ideals in Banach spaces and Banach algebras, Lecture Notes in Math., 1547, Springer-Verlag, 1993.



\bibitem{HL}
R. D. Holmes and A. T.-M. Lau,
Nonexpansive actions of topological semigroups and fixed points, {J. Lond. Math. Soc.} \textbf{5} (1972), 330-336.

\bibitem{Hsu}
R. Hsu,
Topics on weakly almost periodic functions, {Ph.D. Thesis}, SUNY at Buffalo, 1985.


\bibitem{H-S-T}
Z. Hu, M. Sangani Monfared and T. Traynor, On character amenable Banach algebras, Studia Math. \textbf{193} (2009), 53-78.

\bibitem{K-L}
E. Kaniuth, and A. T.M. Lau, 
Fourier and Fourier-Stieltjes algebras on locally compact groups, Mathematical Surveys and Monographs 231, AMS Providence, RI, 2018.

\bibitem{K-T-Y}
P. Kocourek, W. Takahashi and J.-C. Yao, Fixed point theorems and weak convergence theorems for generalized hybrid mappings in Hilbert spaces, { Taiwanese J. Math.} \textbf{14} (2010), 2497-2511.

\bibitem{Iohv}
I. S. Iohvidov,
Unitary operators in a space with an indefinite metric, Zap. Mat. Otd. Fiz.-Mat. Fak. i Har'kov. Mat. Obsc. (4) \textbf{21} (1949), 79-86.

\bibitem{I-K}
I. S. Iohvidov and M. G. Krein, Spectral theory of operators in spaces with indefinite metric I, Amer. Math. Soc. Transl. (2) \textbf{13} (1960) 105-175.





\bibitem{Kirk}
W. A. Kirk,
A fixed point theorem for mappings which do not increase distances, {American Math Monthly}, \textbf{72} (1965), 1004-1006.

\bibitem{Krein}
M. G. Krein, On an application of the fixed-point principle in the theory of linear transformations of spaces with an indefinite metric, Amer. Math. Soc. Transl. (2) \textbf{1} (1955), 27-35. 

\bibitem{Lau73}
  A.\ T.-M.\ Lau, 
  Invariant means on almost periodic functions and fixed point properties, {Rocky Mountain J of Math.} \textbf{3} (1973), 69-76.
  
  \bibitem{Lau76} 
 A.\ T.-M.\ Lau,   Some fixed point theorems and W*-algebras, {Fixed point theory and applications} (ed. \ S.\ Swaminathan) Academic Press, (1976), 121-129.

 \bibitem{Lau85}
 A.\ T.-M.\ Lau, Semigroup of nonexpansive mappings on a Hilbert space, J. Math. Anal. Appl. \textbf{105} (1985), 514--522.

\bibitem{Lau90} 
 A.\ T.-M.\ Lau, Amenability of  semigroups,   {The analytic and topological theory of semigroups} (ed.\  K.\ H.\ Hoffmann,  J.\ D.\ Lawson, and J.\ S.\ Pym), de Gruyter, Berlin, 1990, 313-334.\vspace{2mm}

\bibitem{Lau.survey}
  A.\ T.-M.\ Lau, 
Amenability and fixed point property for semigroup of Nonexpansive mappings, {Fixed point theory and applications} (ed. \ M.\ A.\ Thera and J.\ B.\ Baillon), Pitman Research Notes in Mathematical Series, \textbf{252} (1991), 303-313.

\bibitem{Lau_AG}
A. T.-M. Lau, Fourier and Fourier-Stieltjes algebras of a locally compact group and amenability, Topological vector spaces, algebras and related areas (Hamilton, ON, 1994), 79-92, Pitman Res. Notes Math. Ser. 316, Longman Sci. Tech., Harlow, 1994.

\bibitem{Lau_F}
A. T.-M. Lau, Analysis on a class of Banach algebras with applications to harmonic analysis on locally compact groups and semigroups, Fund. Math. \textbf{118} (1983), 161-175.

\bibitem{Lau_invar}
A. T.-M. Lau, Finite-dimensional invariant subspaces for a semigroup of linear operators, J. Math. Anal. Appl. \textbf{97} (1983), 374-379.


\bibitem{Lau_finite}
A. T.-M. Lau, Finite dimensional invariant subspace properties and amenability, J. Nonlinear Convex Anal. \textbf{11} (2010), 587-595.

\bibitem{L-L}
  A.\ T.-M.\ Lau and\ M.\ Leinert,
  Fixed point property and the Fourier algebra of a locally compact group, {Trans. Amer. Math. Soc.}  \textbf{360} (2008), 6389-6402. .

\bibitem{Lau-Losert}
A. T.-M. Lau and V. Losert, The C*-algebra generated by operators with compact support on a locally compact group, J. Funct. Anal. \textbf{112} (1993), 1-30.

\bibitem{Lau-Ludwig}
A. T.-M. Lau and J. Ludwig, Fourier-Stieltjes algebra of a topological group, Adv. Math. \textbf{229} (2012), 2000-2023. 


\bibitem{LM}
  A.\ T.-M.\ Lau and P. F. Mah,
 Normal structure in dual Banach spaces associated with a locally compact group, {Trans. Amer. Math. Soc.} \textbf{310} (1988),  341-353.

\bibitem{LMU}
A.\ T.-M.\ Lau, P, F. Mah and A. Ulger,
Fixed point property and normal structure for Banach spaces associated to locally compact groups, {Proc. Amer. Math. Soc.} \textbf{125} (1997), 2021-2027.



\bibitem{L-P-W}
A. T.-M. Lau, A. L. T. Paterson and J. C. S. Wong, Invariant subspace theorems for amenable groups, Proc. Edinburgh Math. Soc. \textbf{32} (1989), 415-430.

\bibitem{Lau-Wong}
A. T.-M. Lau and J. C. S. Wong, Finite dimensional invariant subspaces for measurable semigroups of linear operators, J. Math. Anal. Appl. \textbf{127} (1987), 548-558.


\bibitem{L-T1}
  A.\ T.-M.\ Lau and \ W. \ Takahashi, 
Invariant means and fixed point properties for non-expansive representations of topological semigroups, { Topological Methods in Nonlinear Analysis}\,, J. Juliusz Schauder Center, \textbf{5} (1995), 39-57.

\bibitem{L-T 96}
A.\ T.-M. \ Lau and W.  Takahashi,
Invariant submeans and semigroups of nonexpansive mappings on Banach spaces with normal structure, J. Funct. Anal. \textbf{142} (1996), 79-88.
 
\bibitem{L-T 03}
A.\ T.-M. \ Lau and W.  Takahashi,
Nonlinear submeans on semigroups, Topl. Methods Nonlinear Anal.,  \textbf{22} (2003), 345-353.
   
\bibitem{L-Z1}
A.\ T.-M.\ Lau and \ Y,\ Zhang,
Fixed point properties of semigroups of non-expansive mappings, {J. Funct. Anal.} \textbf{254} (2008), 2534--2554.

\bibitem{L-Z2}
A. T.-M. Lau and Y. Zhang, Fixed point properties for semigroups of nonlinear mappings and amenability, J. Funct. Anal. \textbf{263} (2012), 2949-2677.

\bibitem{L-Z3}
A. T.-M. Lau and Y. Zhang, Finite dimensional invariant subspace property and amenability for a class of Banach algebras, Trans. Amer. Math. Soc., \textbf{368} (2016), 3755-3775.

\bibitem{L-Z4}
A. T.-M. Lau and Y. Zhang, Fixed point properties for semigroups of nonlinear mappings on unbounded sets, J. Math. Anal. Appl. \textbf{433} (2016), 1204-1209.

\bibitem{L-Z5}
A. T.-M. Lau and Y. Zhang, Fixed point properties for semigroups of nonexpansive mappings on convex sets in dual Banach spaces, Studia. Math. \textbf{17} (2018), 67-87.

\bibitem{Lennard}
C. Lennard,
$ C\sb 1$ is uniformly Kadec-Klee, {Proc. Amer. Math. Soc.} \textbf{109} (1990), 71-77.

\bibitem{Lim}
T. C. Lim,
Characterizations of normal structure, {Proc. Amer. Math. Soc.} \textbf{43} (1974), 313-319.

\bibitem{Lim 80}
T. C. Lim,
Asymptotic centers and non-expansive mappings in some conjugate spaces, {Pacific J. Math.} \textbf{90} (1980), 135-143.

\bibitem{Losert}
V. Losert,
The Derivation problem for group algebras, {Annals of Math.} \textbf{168} (2008), 221-246.

\bibitem{Maurey}
B. Maurey,
Points fixes des contractions de certains faiblement compacts de $L\sp{1}$, {Seminaire d'Analyse Functionnelle 80-81}, Ecole Polytech., Palaiseau, 1981.

\bibitem{M1}
T. Mitchell, 
Fixed point of reversible semigroups of non-expansive mappings, {Kodai Math. Seminar Rep.} \textbf{22} (1970), 322-323.

\bibitem{Mitch_LUC}
T. Mitchell, Topological semigroups and fixed points, {Illinois J. Math.} \textbf{14} (1970), 630--641.


\bibitem{Mehdi}
M. Sangani Monfared, Character amenability of Banach algebras, Math. Proc. Cambridge Philos. Soc. \textbf{144} (2008), 697-706. 

\bibitem{M-T}
N. Mizoguchi and W. Takahashi,
On the existence of fixed points and ergodic retractions for Lipshitzian semigroups in Hilbert spaces, Nonlinear Analysis \textbf{14} (1990), 69-80.

\bibitem{Naimark1}
M. A. Naimark, On commuting unitary operators in spaces with indefinite metric. Acta Sci. Math. (Szeged) \textbf{24} (1963) 177-189.

\bibitem{Naimark2}
M. A. Naimark, Commutative unitary permutation operators on a $\Pi_k$ space, Soviet Math. \textbf{4} (1963) 543-545. 

\bibitem{Pier}
J.-P. Pier, Amenable locally compact groups, Pitman Research in Math. 172, Longman Group UK LImited, 1988.

\bibitem{Pontr}
L. Pontrjagin, Hermitian operators in spaces with indefinite metric,  Bull. Acad. Sci. URSS. Ser. Math. [Izvestia Akad. Nauk SSSR] \textbf{8} (1944) 243-280.


\bibitem{Ray}
W. O. Ray, The fixed point property and unbounded sets in Hilbert space, Trans, Amer. Math. Soc. \textbf{258} (1980), 531-537.

\bibitem{Ruan}
Z.-J. Ruan,
Amenability of Hopf von Neumann algebras and Kac Algebras, J. Funct. Anal. \textbf{139} (1996) 466-499.

\bibitem{Sakai}
S. Sakai, C*-algebras and W*-algebras, Springer Verlag, 1971.

\bibitem{Skan}
M. Skantharajah, Amenable hypergroups. Illinois J. Math. \textbf{36} (1992) 15-46.


\bibitem{Brailey}
B. Sims,
Examples of fixed point free mappings, {\it Handbook of metric fixed point theory}, (ed.  W. A. Kirk and B. Sims), Kluwer Acad. Publ., Dordrecht, 2001, 35-48. 

\bibitem{Tak}
W. Takahashi,
Fixed point theorem for amenable semigroups of non-expansive mappings, {Kodai Math. Sem Rep.} \textbf{21} (1969), 383-386.


\bibitem{T-W-Y}
W. Takahashi, N.-C. Wong and J.-C. Yao, Attractive point and mean convergence theorems for semigroups of mappings without continuity in Hilbert spaces, J. Fixed Point Theory Appl. \textbf{17} (2015), no. 2, 301-311.

\bibitem{T-T}
W. Takahashi and Y. Takeuchi, Nonlinear ergodic theorem without convexity for generalized hybrid mappings in a Hilbert space, J. Nonlinear Convex Anal. \textbf{12} (2011),  399-406.


\bibitem{Voic}
D. Voiculescu, Amenability and Katz algebras, Algebres d'operateurs et leurs applications en physique mathematique (Proc. Colloq., Marseille, 1977), 451-457, Colloq. Internat. CNRS, 274, CNRS, Paris, 1979. 

\bibitem{Willson}
B. Willson,
Invariant nets for amenable groups and hypergroups, Ph.D. thesis, University of Alberta, 2011.

\bibitem{Willson2}
B. Willson, Configurations and invariant nets for amenable hypergroups and related algebras, Trans. Amer. Math. Soc., Trans. Amer. Math. Soc. \textbf{366} (2014), no. 10, 5087-5112.






\end{thebibliography}
\end{document}